\newcommand{\xqedhere}[2]{%
  \rlap{\hbox to#1{\hfil\llap{\ensuremath{#2}}}}}
\newtheorem*{rep@theorem}{\rep@title}
\newcommand{\newreptheorem}[2]{%
\newenvironment{rep#1}[1]{%
 \def\rep@title{#2 \ref{##1}}%
 \begin{rep@theorem}}%
 {\end{rep@theorem}}}
\newtheorem*{thma}{Theorem A}
\newtheorem*{thmb}{Theorem B}
\newtheorem*{corc}{Corollary C}
\newtheorem{thm}{Theorem}[section] 
\newtheorem{cor}[thm]{Corollary}
\newtheorem{lem}[thm]{Lemma}  
\newtheorem{prop}[thm]{Proposition} 
\newtheorem{df-pr}[thm]{Definition-Proposition}
\theoremstyle{definition}
\newtheorem{defn}[thm]{Definition}
\newtheorem{rem}[thm]{Remark}
\newtheorem{exm}[thm]{Example}
\newtheorem{notation}[thm]{Notation} 
\newtheorem{question}[thm]{Question}
\newtheorem{construction}[thm]{Construction}
\newcommand{\RR}{{\mathbb R}}
\newcommand{\QQ}{{\mathbb Q}} 
\newcommand{\CC}{{\mathbb C}}
\newcommand{\ZZ}{{\mathbb Z}}
\newcommand{\sfa }{{\mathsf a}}
\newcommand{\sfb }{{\mathsf b }}
\newcommand{\sfe }{{\mathsf e}}
\newcommand{\sff }{{\mathsf f}}
\newcommand{\sfg }{{\mathsf g}}
\newcommand{\sfq }{{\mathsf q}}
\newcommand{\frakp}{{\mathfrak p}}
\newcommand{\frakr}{{\mathfrak  r}}
\newcommand{\frakt}{{\mathfrak  t}}
\newcommand{\bfe}{{\mathbf e}}
\newcommand{\calX}{{\mathcal X}}
\newcommand{\calZ}{{\mathcal Z}}
\newcommand{\sfX }{{\mathsf X}}
\newcommand{\sfW }{{\mathsf W}}
\newcommand{\sfU }{{\mathsf U}}
\newcommand{\sfT }{{\mathsf T}}
\newcommand{\sfR }{{\mathsf R}}
\newcommand{\sfN }{{\mathsf N}}
\newcommand{\sfM }{{\mathsf M}}
\newcommand{\sfL }{{\mathsf L}}
\newcommand{\sfK }{{\mathsf K}}
\newcommand{\sfG }{{\mathsf G}}
\newcommand{\sfD }{{\mathsf D}}
\newcommand{\CP}{{\mathbb C}{\mathbb P}}
\newcommand{\one}{{\bf 1}}
\newcommand{\surj}{\twoheadrightarrow}
\newcommand{\lan}{{\langle}}
\newcommand{\ran}{{\rangle}}
\newcommand{\inc}{\hookrightarrow}
\newcommand{\Hom}{\operatorname{Hom}}
\newcommand{\Ext}{\operatorname{Ext}}
\newcommand{\Ann}{\operatorname{Ann}}
\newcommand{\Lie}{\operatorname{Lie}}
\newcommand{\im}{{\operatorname{Im}}}
\newcommand{\U}{{\mbox{U}}}
\newcommand{\Tor}{{\operatorname{Tor}}}
\newcommand{\bideg}{{\operatorname{bideg\ }}}
\newcommand{\depth}{{\operatorname{depth}}}
\newcommand{\grade}{{\operatorname{grade}}}
\begin{document}
\title{Moment Angle Complexes and big Cohen-Macaulayness}
\author{Shisen Luo} \address{Cornell University, Mathematics Department, Ithaca, NY 14853} \email{sl943@cornell.edu}
\author{Tomoo Matsumura} \address{Algebraic Structure and its Applications Research Center, Department of Mathematical Sciences, KAIST, Daejeon, South Korea} \email{tomoomatsumura@kaist.ac.kr }%
\author{W. Frank Moore} \address{Department of Mathematics, Wake Forest University, Winston-Salem, NC 27109} \email{moorewf@wfu.edu}

\begin{abstract}  
Let $\calZ_K \subset \CC^m$ be the moment angle complex associated to a simplicial complex $K$ on $[m]$, together with
the natural action of the torus $\sfT=\U(1)^m$. Let $\sfG\subset \sfT$ be a (\emph{possibly disconnected}) subgroup and
$\sfR:=\sfT/\sfG$. Let $\ZZ[K]$ be the Stanley-Reisner ring of $K$ and consider $\ZZ[\sfR^*]:=H^*(B\sfR;\ZZ)$ as a subring of
$\ZZ[\sfT^*]:=H^*(B\sfT;\ZZ)$. We prove that $H_{\sfG}^*(\calZ_K;\ZZ)$ is isomorphic to
$\Tor_{\ZZ[\sfR^*]}^*(\ZZ[K],\ZZ)$ as a graded module over $\ZZ[\sfT^*]$. Based on this, we characterize the
surjectivity of $\iota^* :H_{\sfT}^*(\calZ_K;\ZZ) \to H_{\sfG}^*(\calZ_K; \ZZ)$ (i.e. $H^{odd}_{\sfG}(\calZ_K;\ZZ)=0$)
in terms of the vanishing of $\Tor_1^{\ZZ[\sfR^*]}(\ZZ[K],\ZZ)$ and discuss its relation to the freeness and the
torsion-freeness of $\ZZ[K]$ over $\ZZ[\sfR^*]$.  For various toric orbifolds $\calX$, by which we mean quasi-toric
orbifolds or toric Deligne-Mumford stacks, the cohomology of $\calX$ can be identified with $H_{\sfG}(\calZ_K)$ with
appropriate $K$ and $\sfG$ and the above results mean that $H^*(\calX;\ZZ) \cong \Tor_{\ZZ[\sfR^*]}^*(\ZZ[K],\ZZ)$ and
that $H^{odd}(\calX;\ZZ)=0$ if and only if $H^*(\calX;\ZZ)$ is the quotient $H_{\sfR}^*(\calX;\ZZ)$.
\end{abstract}
\maketitle
\section{{\bf Introduction}} 
The equivariant cohomology and the ordinary cohomology with $\ZZ$-coefficients of a ``\emph{compact smooth toric space}"
(including quasi-toric manifolds, complete smooth toric varieties) has been known by the work of Danilov
\cite{Danilov78}, Jurkievicz \cite{Jurkiewicz85}, and Davis-Januszkiewicz \cite{DavisJanuszkiewicz91}: the equivariant
cohomology is the Stanley-Reisner ring of the associated simplicial complex and the ordinary cohomology is the quotient
of the equivariant cohomology by linear relations.

The orbifold analogue of these spaces have been also introduced and
studied by several people, for example, Lerman-Tolman \cite{LT}, Borisov-Chen-Smith \cite{BCS}, Poddar-Sarkar
\cite{PoddarSarkar10}. The equivariant cohomology of these toric orbifolds with $\ZZ$-coefficients is also known
to be the associated Stanley-Reisner rings and the ordinary cohomology is the quotient of the equivariant cohomology over
$\QQ$-coefficients, c.f. \cite{Danilov78, Jurkiewicz85, BCS, PoddarSarkar10}. However the ordinary cohomology with
$\ZZ$-coefficients is hard to compute because it is not the quotient of the equivariant cohomology in general, for
example, the direct product of weighted projective spaces. The main theme of this paper is to characterize when the
ordinary cohomology is the quotient of the equivariant cohomology.

Our approach is to view previously mentioned toric orbifolds as \emph{quotient stacks} given by partial quotients of the
moment angle complexes and the \emph{cohomology of stacks} in the sense of \cite{Edidin10} (see also \cite{TolmanThesis,
  EdidinGraham98}). The \emph{moment-angle complex} $\calZ_K$ was introduced by Buchstaber and Panov in \cite{BP99} as a
disc-circle decomposition of the Davis-Januszkiewicz universal space associated to a simplicial complex $K$
\cite{DavisJanuszkiewicz91} where they introduced a quasi-toric manifold as a partial quotient of the moment angle
manifold associated to a simple polytope.

If $K$ is a simplicial complex on $[m]:=\{1,\cdots,m\}$, then $\calZ_K$
carries a natural action of the torus $\sfT:=\U(1)^m$. The quotient stack $[\calZ_K/\sfG]$ with an appropriate choice of
the subgroup $\sfG \subset \sfT$ can be used as a topological model to compute the cohomology of quasi-toric orbifolds
\cite{DavisJanuszkiewicz91, PoddarSarkar10}, symplectic toric orbifolds \cite{LT} and toric Deligne-Mumford stacks
\cite{BCS}, i.e. \emph{the ordinary cohomology of these toric orbifolds as stacks can be defined as the
  $\sfG$-equivariant cohomology $H^*_{\sfG}(\calZ_K;\ZZ)$.} Similarly the equivariant cohomology can be defined as
$H^*_{\sfT}(\calZ_K;\ZZ)$ which is isomorphic to the Stanley-Reisner ring $\ZZ[K]$ as quotient rings of
$\ZZ[\sfT^*]:=H^*(B\sfT;\ZZ)=\ZZ[x_1,\cdots, x_m]$. In Section \ref{bg}, we recall the constructions of those toric
orbifolds and the relation to the moment angle complexes to motivate our readers.

In Section \ref{TheoremFranz}, we start with proving

\begin{thma}
Let $\sfG \subset \sfT$ be a (possibly disconnected) subgroup
  and $\sfR:=\sfT/\sfG$.  There is an isomorphism of graded modules over $H^*(B\sfT;\ZZ)$,
\[
H_{\sfG}^*(\calZ_K;\ZZ) \cong \Tor_{\ZZ[\sfR^*]}^*(\ZZ[K];\ZZ).
\]
\end{thma}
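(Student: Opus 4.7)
The plan is to set up a homotopy pullback square relating the Borel spaces for $\sfG$ and $\sfT$, and then to run an Eilenberg--Moore style argument at the cochain level, using an integral formality property of $\calZ_K$ that extends the classical Buchstaber--Panov/Franz description of $H^*_\sfT(\calZ_K;\ZZ)=\ZZ[K]$.

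\textbf{Step 1 (Pullback square).} Since $E\sfT$ is contractible with free $\sfG$-action, the Borel space $(\calZ_K)_\sfG:=E\sfT\times_\sfG\calZ_K$ models $H^*_\sfG(\calZ_K;\ZZ)$. The short exact sequence $1\to\sfG\to\sfT\to\sfR\to 1$ gives a fiber sequence $B\sfG\to B\sfT\to B\sfR$, classifying the principal $\sfR$-bundle $E\sfT/\sfG\to E\sfT/\sfT$. Pulling this back along the Borel fibration $(\calZ_K)_\sfT\to B\sfT$ produces a principal $\sfR$-bundle $(\calZ_K)_\sfG\to(\calZ_K)_\sfT$, and hence a homotopy Cartesian square
\[
\begin{array}{ccc}
(\calZ_K)_\sfG & \longrightarrow & \pt \\
\downarrow & & \downarrow \\
(\calZ_K)_\sfT & \longrightarrow & B\sfR.
\end{array}
\]

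\textbf{Step 2 (Formal cochain model).} The square above is the geometric source of the desired isomorphism. To realize it integrally---bypassing convergence issues of the Eilenberg--Moore spectral sequence, since $B\sfR$ need not be simply connected when $\sfR$ is disconnected---I would work at the cochain level. Pick a $\sfT$-equivariant cellular cochain complex $C^*_\sfT(\calZ_K)$ coming from the natural polydisk decomposition of $\calZ_K$; this is a DG module over $\ZZ[\sfT^*]$ with $H^*_\sfT(\calZ_K;\ZZ)=\ZZ[K]$. The essential input is an integral formality statement (essentially Franz's theorem) producing a $\ZZ[\sfT^*]$-linear quasi-isomorphism
\[
C^*_\sfT(\calZ_K)\ \xrightarrow{\ \simeq\ }\ \ZZ[K].
\]
Translating the homotopy pullback from Step 1 to the chain level yields a quasi-isomorphism
\[
C^*_\sfG(\calZ_K)\ \simeq\ C^*_\sfT(\calZ_K)\otimes^L_{\ZZ[\sfR^*]}\ZZ,
\]
so passing to cohomology and applying formality gives
\[
H^*_\sfG(\calZ_K;\ZZ)\ \cong\ H^*\!\bigl(\ZZ[K]\otimes^L_{\ZZ[\sfR^*]}\ZZ\bigr)\ \cong\ \Tor_{\ZZ[\sfR^*]}^*(\ZZ[K],\ZZ).
\]
The $\ZZ[\sfT^*]$-module structures agree because every quasi-isomorphism in sight is $\ZZ[\sfT^*]$-linear, the $\ZZ[\sfT^*]$-action on the left coming from the equivariant structure via $\sfG\hookrightarrow\sfT$ and on the right coming from the quotient $\ZZ[\sfT^*]\twoheadrightarrow\ZZ[K]$ acting on the first Tor argument.

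\textbf{Main obstacle.} The critical difficulty is that $\sfG$ is permitted to be disconnected, so $\sfR=\sfT/\sfG$ may carry a finite abelian factor. Then $\ZZ[\sfR^*]=H^*(B\sfR;\ZZ)$ is not a polynomial ring, may contain torsion, and admits no Koszul resolution of $\ZZ$; the clean torus-case arguments must be replaced by working with an arbitrary (possibly unbounded) projective resolution, and one must check that the requisite quasi-isomorphisms survive restriction of scalars along the generally non-flat inclusion $\ZZ[\sfR^*]\hookrightarrow\ZZ[\sfT^*]$. I would deal with this by first treating the connected case as a direct adaptation of Franz's original torus argument, then bootstrapping to arbitrary $\sfG$ via the identity component $\sfG^0\subset\sfG$ together with a Grothendieck-type base-change spectral sequence comparing $\Tor$ over $\ZZ[\sfR^*]$ with $\Tor$ over $\ZZ[(\sfR^0)^*]$.
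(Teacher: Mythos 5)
Your Steps 1--2 are in substance the paper's own argument: the paper likewise realizes $E\sfT\times_{\sfG}\calZ_K \to E\sfT\times_{\sfT}\calZ_K$ as (a model of) the pullback of $E\sfR\to B\sfR$ along $(\calZ_K)_{\sfT}\to B\sfT\to B\sfR$, and then computes cochains of the total space by a cochain-level Koszul/Eilenberg--Moore device, namely Franz's twisted tensor product $C^*(Y)\otimes^{\sfR}H^*(\sfR)$ (Proposition 5.2 of \cite{Franz02}) combined with the integral formality of $C^*(E\sfT\times_{\sfT}\calZ_K)$ (Theorem 1.1 of \cite{Franz05i}). One caveat on your Step 2 as written: $C^*_{\sfT}(\calZ_K)$ is a module over the cochain algebra $C^*(B\sfR)$, not over $\ZZ[\sfR^*]$, and Franz's formality is formality as an $H^*(B\sfT)$-module \emph{up to homotopy} (over the reduced cobar construction), not a strict $\ZZ[\sfT^*]$-linear quasi-isomorphism of DG modules; making the expression $C^*_{\sfT}(\calZ_K)\otimes^{L}_{\ZZ[\sfR^*]}\ZZ$ meaningful is precisely what the twisted tensor product machinery is for, and the $\ZZ[\sfT^*]$-linearity of the resulting isomorphism requires a naturality argument (the paper's final commutative diagram), not just the remark that all maps in sight are linear.

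The genuine flaw is your ``main obstacle'' paragraph. The quotient $\sfR=\sfT/\sfG$ of a torus by \emph{any} closed subgroup is a compact connected abelian Lie group, hence a torus $\cong\U(1)^n$; it never acquires a finite factor, $\ZZ[\sfR^*]=H^*(B\sfR;\ZZ)$ is always a polynomial ring, $B\sfR$ is simply connected, and the Koszul resolution of $\ZZ$ over $\ZZ[\sfR^*]$ exists (it is used in Definition \ref{cohdegree}). Disconnectedness of $\sfG$ is visible only in $H^*(B\sfG)$ (which never enters the argument) and in the fact that the lattice inclusion $\sfN_{\sfR}^*\subset\sfN_{\sfT}^*$ need not be saturated, i.e.\ the $u_i$ may be non-primitive integral combinations of the $x_j$; that is what makes the $\Tor$ interesting, not what makes the proof harder. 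Consequently your direct argument already covers the disconnected case, while the proposed fallback is both unnecessary and unsound: comparing $H^*_{\sfG}$ with $H^*_{\sfG^0}$ integrally amounts to descent along the finite group $\sfG/\sfG^0$, whose higher integral group cohomology does not go away, and this is not governed by a base-change spectral sequence relating $\Tor$ over the two polynomial rings. Likewise the worry about restriction of scalars along the possibly non-flat inclusion $\ZZ[\sfR^*]\hookrightarrow\ZZ[\sfT^*]$ is moot, since restriction of scalars is exact and preserves quasi-isomorphisms irrespective of flatness.
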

Here $\ZZ[\sfR^*]:=H^*(B\sfR;\ZZ)=\ZZ[u_1,\cdots,u_n]$ is considered as a subring of $\ZZ[\sfT^*]$ so that $u_i$'s are
linear combinations of $x_j$'s.  It is worth noting that this theorem holds more generally. Namely, the theorem holds
for any a topological space $X$ with $\sfT$-action such that \emph{$C^*(E\sfT\times_{\sfT} X;\ZZ)$ is formal in the
  category of $H^*(B\sfT;\ZZ)$-modules up to homotopy} in the sense of \cite{Franz05i}. See Section \ref{TheoremFranz}
for the details.

Based on this isomorphism, we prove our characterization theorem:

\begin{thmb}
The following are equivalent: $(1)$
  $\Tor_1^{\ZZ[\sfR^*]}(\ZZ[K],\ZZ) = 0$$;$ $(2)$ $H^*_{\sfG}(\calZ_K;\ZZ)$ is isomorphic to the quotient of $\ZZ[K]$ by
  linear terms; $(3)$ $H^{odd}_{\sfG}(\calZ_K; \ZZ)=0$.
\end{thmb}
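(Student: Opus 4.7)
By Theorem~A, I identify $H_{\sfG}^*(\calZ_K;\ZZ) \cong \bigoplus_{i\ge 0}\Tor_i^{\ZZ[\sfR^*]}(\ZZ[K],\ZZ)$ as graded $\ZZ[\sfT^*]$-modules. The plan is to compute the right-hand side via the Koszul resolution of $\ZZ$ over $\ZZ[\sfR^*]=\ZZ[u_1,\ldots,u_n]$: the group $\Tor_i^{\ZZ[\sfR^*]}(\ZZ[K],\ZZ)$ is the $i$-th homology of the complex
\[
\bigl(\Lambda_{\ZZ}(\theta_1,\ldots,\theta_n)\otimes_{\ZZ}\ZZ[K],\;d\theta_i=u_i\bigr),\qquad \deg\theta_i=1.
\]
Since $\ZZ[K]$ is concentrated in even degree, an element of $\Tor_i$ contributes to total degree of parity $i\pmod 2$, so $H_{\sfG}^{odd}(\calZ_K;\ZZ)=\bigoplus_{i\text{ odd}}\Tor_i^{\ZZ[\sfR^*]}(\ZZ[K],\ZZ)$, while $\Tor_0=\ZZ[K]/(u_1,\ldots,u_n)\ZZ[K]$. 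A direct check shows that, under the Theorem~A identification, the canonical map $\ZZ[K]/(u_1,\ldots,u_n)\ZZ[K]\to H_{\sfG}^*(\calZ_K;\ZZ)$ coming from $\iota^*$ corresponds to the inclusion of the $\Tor_0$-summand. The three conditions thus translate to: (1) $\Tor_1=0$; (2) $\Tor_i=0$ for all $i\ge 1$; (3) $\Tor_i=0$ for all odd $i\ge 1$.

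With this dictionary the implications (2)$\Rightarrow$(3)$\Rightarrow$(1) are immediate: the quotient of the even-graded ring $\ZZ[K]$ by a homogeneous ideal is even-graded, and $\Tor_1$ sits in odd degree. The content of the theorem lies in (1)$\Rightarrow$(2), which reduces to the following purely algebraic

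\textbf{Key Lemma.} \emph{Let $R=\ZZ[u_1,\ldots,u_n]$ with $\deg u_i=2$, and let $M$ be a non-negatively graded $R$-module. If $\Tor_1^R(M,\ZZ)=0$, then $u_1,\ldots,u_n$ is an $M$-regular sequence, and consequently $\Tor_i^R(M,\ZZ)=0$ for all $i\ge 1$.}

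\noindent This is precisely the ``big Cohen-Macaulay'' property advertised in the title. Granting the lemma applied to $M=\ZZ[K]$, the Koszul complex $K(u;M)$ is acyclic in positive degrees, so $H_{\sfG}^*(\calZ_K;\ZZ)=\Tor_0=\ZZ[K]/(u_1,\ldots,u_n)\ZZ[K]$, giving (2).

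The main obstacle is thus the Key Lemma. Over a field it follows immediately from the existence of a minimal graded free resolution combined with graded Nakayama; over $\ZZ$ this is genuinely more delicate, because $M/(u_1,\ldots,u_n)M$ may carry $\ZZ$-torsion, so a minimal $R$-generating set of $M$ need not descend to a $\ZZ$-basis of $M/(u)M$ and the standard ``first syzygy vanishes'' argument must be adapted. The plan is to adapt the classical induction of Bruns-Herzog to the graded-over-$\ZZ$ setting. The base case $n=1$ is trivial since $\Tor_1^{\ZZ[u_1]}(M,\ZZ)=\Ker(u_1\cdot\colon M\to M)$, which is zero precisely when $u_1$ is a nonzerodivisor, making the length-one Koszul complex acyclic in positive degrees. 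For the inductive step one analyses the Koszul 1-cycle $\theta_n\otimes m$ with $u_n m=0$ and extracts from $\Tor_1^R(M,\ZZ)=0$ that every element of $\Ker(u_n)$ lies in $(u_1,\ldots,u_{n-1})M$; iterating this relation and using the positive grading on $M$ to run a ``lowest-degree element'' Nakayama-style argument shows $\Ker(u_n)=0$. A parallel inspection of the low-degree Koszul data, or equivalently of the change-of-rings spectral sequence $E^2_{p,q}=\Tor_p^{R'}(\Tor_q^R(M,R'),\ZZ)\Rightarrow\Tor_{p+q}^R(M,\ZZ)$ attached to $0\to R(-2)\xrightarrow{u_n}R\to R'\to 0$ with $R'=\ZZ[u_1,\ldots,u_{n-1}]$, shows $\Tor_1^{R'}(M/u_n M,\ZZ)=0$, so the inductive hypothesis furnishes regularity of $u_1,\ldots,u_{n-1}$ on $M/u_n M$ and completes the proof. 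The delicate point throughout is controlling the interaction of possible $\ZZ$-torsion in $M$ with the Koszul differentials, and this is precisely where the non-negative grading of $M$ is essential.
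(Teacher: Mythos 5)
Your overall architecture is the same as the paper's: both proofs funnel everything through Theorem A and reduce the theorem to the purely algebraic fact that $\Tor_1^{\ZZ[\sfR^*]}(\ZZ[K],\ZZ)=0$ forces $\Tor_i^{\ZZ[\sfR^*]}(\ZZ[K],\ZZ)=0$ for all $i\ge 1$ (equivalently, that the $u_i$ form a $\ZZ[K]$-regular sequence), which the paper establishes in Section \ref{ba} by adapting Serre's Koszul-homology argument with graded Nakayama (and by citing Franz--Puppe, Prop.\ 2.3). One genuine difference: for (3)$\Rightarrow$(2) the paper argues topologically, degenerating the Serre spectral sequence of the Borel fibration over $B\sfR$, whereas you deduce (3)$\Rightarrow$(1) algebraically from the parity of the total degree in Definition \ref{cohdegree}; given Theorem A with $\ZZ$-coefficients this is a legitimate, arguably cleaner, route around that step.

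The gap is in your proof of the Key Lemma, which you rightly identify as the whole content. In the inductive step you claim: every $m\in\Ker(u_n\colon M\to M)$ lies in $(u_1,\dots,u_{n-1})M$ (correct: compare $\theta_n$-components of a Koszul boundary), and then that ``iterating this relation'' plus a lowest-degree Nakayama-style argument gives $\Ker(u_n)=0$. That deduction does not follow: the elements $x_{in}$ with $m=\sum_i u_i x_{in}$ produced by the boundary are not themselves $u_n$-torsion, so the iteration does not stay inside $\Ker(u_n)$, and expressing a lowest-degree element of $\Ker(u_n)$ through lower-degree elements of $M$ yields no contradiction. What Nakayama actually requires is the stronger containment $\Ker(u_n)\subseteq (u_1,\dots,u_{n-1})\Ker(u_n)$, which the hypothesis $\Tor_1=0$ alone does not hand you at this stage. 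Two repairs: (i) run Serre's induction in the standard order, as the paper does --- the Koszul long exact sequence shows $u_n$ acts surjectively on $H_1(u_1,\dots,u_{n-1};M)$, which is bounded below, so graded Nakayama kills it; induction then makes $u_1,\dots,u_{n-1}$ regular on $M$, and the same exact sequence makes $u_n$ regular on $M/(u_1,\dots,u_{n-1})M$ (regularity with $u_n$ last, which suffices for Koszul acyclicity); or (ii) push your own change-of-rings spectral sequence one step further: its low-degree exact sequence reads $\Tor_2^{R'}(M/u_nM,\ZZ)\to \Ker(u_n)\otimes_{R'}\ZZ\to \Tor_1^R(M,\ZZ)\to \Tor_1^{R'}(M/u_nM,\ZZ)\to 0$, so after the inductive hypothesis gives full acyclicity for $M/u_nM$ over $R'$ (not just $\Tor_1^{R'}=0$), the term $\Ker(u_n)\otimes_{R'}\ZZ$ vanishes, i.e.\ $\Ker(u_n)=(u_1,\dots,u_{n-1})\Ker(u_n)$, and graded Nakayama for bounded-below modules gives $\Ker(u_n)=0$. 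With either repair your argument closes and coincides in substance with the paper's commutative-algebra input (Corollary \ref{p22}).
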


\vspace{2 mm}\noindent We will explain in Section \ref{ba} that, even though $\ZZ[K]$ might not be finitely generated
over $\ZZ[\sfR^*]$, the vanishing of $\Tor_1^{\ZZ[\sfR^*]}(\ZZ[K],\ZZ)$ has the usual meaning in terms of regular
sequences, i.e. $\Tor_1^{\ZZ[\sfR^*]}(\ZZ[K],\ZZ) = 0$ iff $u_1,\cdots, u_n$ form a $\ZZ[K]$-regular sequence. Thus we
say $\ZZ[K]$ is \emph{big Cohen-Macaulay over $\ZZ[\sfR^*]$} if (1) is satisfied.

By presenting a toric orbifold $\calX$ as $[\calZ_K/\sfG]$, we obtain the following immediate corollary:

\begin{corc}
If $\calX$ is a toric orbifold stack presented as $[\calZ_K/\sfG]$, then
\[
H^*(\calX;\ZZ) \cong \Tor_{\ZZ[\sfR^*]}^*(\ZZ[K];\ZZ).
\]
Furthermore, $H^*(\calX;\ZZ)$ is the quotient of Stanley-Reisner ring $\ZZ[K]$ if and only if one of the following
equivalent conditions holds: $(i)$ $H^{odd}(\calX;\ZZ)=0$; $(ii)$ $\Tor^{\ZZ[\sfR^*]}_1(\ZZ[K];\ZZ)=0$.
\end{corc}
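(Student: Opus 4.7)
The plan is to deduce Corollary C directly from Theorem A and Theorem B together with the topological input that the cohomology of the stack $\calX = [\calZ_K/\sfG]$, in the sense of Edidin, coincides by definition with $H^*_{\sfG}(\calZ_K;\ZZ)$. This identification is precisely the ``ordinary cohomology'' discussed in Section \ref{bg} for all three classes of orbifolds of interest (quasi-toric orbifolds, symplectic toric orbifolds, and toric Deligne--Mumford stacks), so once the presentation $\calX \simeq [\calZ_K/\sfG]$ is fixed, there is nothing extra to compute topologically.

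Given that identification, the first statement $H^*(\calX;\ZZ)\cong \Tor^*_{\ZZ[\sfR^*]}(\ZZ[K];\ZZ)$ is an immediate rewriting of Theorem A: one substitutes $H^*(\calX;\ZZ)$ for $H^*_{\sfG}(\calZ_K;\ZZ)$ on the left side of the isomorphism provided there. So nothing new is needed for this part.

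For the ``furthermore'' clause, I would apply Theorem B verbatim. Condition $(i)$ of the corollary, $H^{\mathrm{odd}}(\calX;\ZZ)=0$, translates under $H^*(\calX;\ZZ)=H^*_{\sfG}(\calZ_K;\ZZ)$ into condition (3) of Theorem B; condition $(ii)$ of the corollary is condition (1) of Theorem B unchanged. Theorem B then gives the equivalence $(i)\Leftrightarrow(ii)$, and further equates both with the statement that $H^*_{\sfG}(\calZ_K;\ZZ)$ is the quotient of $\ZZ[K]$ by linear terms. Using $H^*_{\sfT}(\calZ_K;\ZZ)\cong\ZZ[K]$ and the natural pullback $\iota^*\colon H^*_{\sfT}(\calZ_K;\ZZ)\to H^*_{\sfG}(\calZ_K;\ZZ)$, this last assertion is exactly the assertion that $H^*(\calX;\ZZ)$ is the quotient of the Stanley--Reisner ring $\ZZ[K]$ by the image of the ideal generated by $u_1,\ldots,u_n\in\ZZ[\sfR^*]\subset \ZZ[\sfT^*]$, i.e.\ by linear terms.

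Since everything is an application of results already established, there is no genuine obstacle here; the only point that deserves a sentence of care in the write-up is making explicit, for each of the three flavors of toric orbifold recalled in Section \ref{bg}, that their cohomology as a stack is indeed modeled by $H^*_{\sfG}(\calZ_K;\ZZ)$ for the prescribed $K$ and $\sfG$, so that Theorems A and B apply without further hypothesis. Once this dictionary is in place the corollary follows in one line.
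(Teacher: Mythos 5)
Your proposal is correct and matches the paper's treatment: the paper also obtains Corollary C (stated in the body as Corollaries \ref{cormain} and \ref{cormain2}) by identifying $H^*(\calX;\ZZ)$ with $H^*_{\sfG}(\calZ_K;\ZZ)$ via the stack-cohomology discussion of Section \ref{bg} and Remark \ref{stackcohomology}, and then invoking Theorems \ref{Franz} and \ref{main} verbatim.
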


\vspace{2 mm}\noindent For example, the cohomology of the weighted projective spaces as stacks are shown to be the
quotient of its equivariant cohomology, based on the computation exhibited in \cite{Holm08}. On the other hand, the
cohomology of a direct product of weighted projective spaces is not the quotient of its equivariant cohomology. See
Section \ref{exm} for the details and more examples.

In Section \ref{mainsection}, we will discuss the freeness and the torsion-freeness of $\ZZ[K]$ over $\ZZ[\sfR^*]$. In
particular, we show that the equivariant cohomology of a toric orbifold is torsion-free over $\ZZ[\sfR^*]$. We also give
a certain injectivity theorem of an equivariant cohomology of $\calZ_K$ (Theorem \ref{Injectivity}) for a symplectic
toric orbifold $[\calZ_K/\sfG]$, which give a sufficient condition that $\ZZ[K]$ is free over a subring of
$\ZZ[\sfR^*]$.

Finally, in section \ref{secgysin}, in light of Theorem \ref{Franz}, we construct an algebraic Gysin sequence for
$\Tor$ of $\ZZ[K]$ in analogy with the Gysin sequence of $S^1$-fibration over a toric manifold.
\section{{\bf Moment Angle Complexes and Toric Orbifolds}}\label{bg}
In this section, we review the basic facts about the moment angle complexes and various toric orbifolds to motivate our
results.
\subsection{Moment Angle Complexes}
The \emph{moment angle complex} $\calZ_K$ associated to a simplicial complex $K$ was introduced by Buchstaber and Panov
in \cite{BP99} as a disc-circle decomposition of the Davis-Januszkiewicz universal space associated to a simplicial
complex $K$ \cite{DavisJanuszkiewicz91} and it has been actively studied in \emph{toric topology} and its connections to
symplectic and algebraic geometry and combinatorics. For convenience, we use the following notation for the rest of the
paper.
\begin{notation}\label{notation0}
Let $X,Y$ be the subsets of a set $Z$. For a subset $\sigma \subset [m]$, $X^{\sigma} \times Y^{[m]\backslash\sigma}
\subset Z^m$ is the direct product of $X$ and $Y$'s where $i$-th compont is $X$ if $i \in\sigma$ and $Y$ if
$i\not\in\sigma$.
\end{notation}
For a simplicial complex $K$ on vertices $[m]:=\{1,\cdots,m\}$, the moment angle complex $\calZ_K \subset \CC^m$ is
defined as $\calZ_K = \bigcup_{\sigma \in K} \sfD^{\sigma} \times (\partial \sfD)^{[m]\backslash \sigma}$ where $\sfD=\{
z \in \CC \ | \ |z| \leq 1\}$ is the unit disk and $\partial \sfD$ is its boundary circle. This space $\calZ_K$ carries
a natural action of $\sfT=\U(1)^m$. It is originally proved in \cite{DavisJanuszkiewicz91} that
\begin{eqnarray}\label{Z[T]}
H_{\sfT}^*(\calZ_K,\ZZ) \cong \ZZ[K] \ \ \ \ \mbox{ as graded rings over $\ZZ[\sfT^*]$}.
\end{eqnarray} 
Here $\ZZ[K]$ is the \emph{Stanley-Reisner (face) ring} defined by $\ZZ[K] = \frac{\ZZ[x_1,\cdots, x_m]}{\lan
  x_{\sigma}, \sigma \not\in K\ran}$ where $x_{\sigma}:=\prod_{i\in\sigma} x_i$. With the identification
$\ZZ[\sfT^*]:=H^*(B\sfT,\ZZ) = \ZZ[x_1,\cdots, x_m]$, the isomorphism is as graded algebras over the polynomial ring
with $\deg x_i = 2$. For the details, we refer to Chapter 6 \cite{BP}.

Baskakov-Buchstaber-Panov \cite{BBP04} also computed the ordinary cohomology of $\calZ_K$:
\begin{eqnarray}\label{Z}
H^*(\calZ_K,\ZZ) \cong \Tor_{\ZZ[\sfT^*]}^*(\ZZ[K],\ZZ) \ \ \ \ \mbox{ as graded rings}.
\end{eqnarray} 
Here the grading on the right hand side is the total degree of bidegree coming from the (co)homological degree of Koszul complex and the degree of $\ZZ[K]$. More precisely
\begin{defn}\label{cohdegree}
Let $\sfM$ be a graded $\ZZ[\sfR^*]:=\ZZ[u_1,\cdots,u_n]$-module. Let $\Lambda$ be the exterior algebra generated by
$\eta_1,\cdots,\eta_n$ with $\deg \eta_i=1$. Let $\ZZ[\sfR^*]\otimes^{\sfR}\Lambda$ be the Koszul complex. Then
$\Tor^{\ZZ[\sfR^*]}_*(\sfM,\ZZ)$ is the homology of the complex $\sfM\otimes_{\ZZ[\sfR^*]}
\ZZ[\sfR^*]\otimes^{\sfR}\Lambda$ where the degree is given by $\deg \eta_i=1$. The complex $\sfM\otimes_{\ZZ[\sfR^*]}
\ZZ[\sfR^*]\otimes^{\sfR}\Lambda$ is also a bigraded differential complex with $\bideg (\alpha \otimes \xi_i) = [-1,2] +
   [0,|\alpha|]$ where $\alpha \in M^{|\alpha|}$. The cohomological degree of $\Tor$ is defined to be the total degree
   of this bidegree and is denoted by the superscript as in $\Tor_{\ZZ[\sfR^*]}^*(\sfM,\ZZ)$.
\end{defn}
Now it is natural to ask if $H_{\sfG}^*(\calZ_K,\ZZ)$ can be computed by $\Tor_{\ZZ[\sfR^*]}^*(\calZ_K,\ZZ)$ where $\sfG
\subset \sfT$ is a (possible disconnected) subgroup, $\sfR:=\sfT/\sfG$ and $\ZZ[\sfR^*]:=H^*(B\sfR) \subset
\ZZ[\sfT^*]$. In Section \ref{TheoremFranz}, we show that
\[
H_{\sfG}^*(\calZ_K,\ZZ) \cong \Tor_{\ZZ[\sfR^*]}^*(\calZ_K,\ZZ) \ \  \mbox{ as graded $\ZZ[\sfT^*]$-modules (Theorem \ref{Franz})}.
\]
\subsection{Partial Quotient of Moment Angle Complexes}
When a subgroup $\sfG \subset \sfT$ acts on $\calZ_K$ locally freely, the \emph{quotient stack} $[\calZ_K/\sfG]$ is a
topological orbifold (as a stack), together with the residual action of $\sfR:=\sfT/\sfG$. Indeed, $[\calZ_K/\sfG]$ is
topologically isomorphic to various toric ``spaces", including \emph{quasi-toric orbifolds} defined by Poddar-Sarkar
\cite{PoddarSarkar10}, symplectic compact toric orbifolds defined by Lerman-Tolman \cite{LT}, and \emph{algebraic toric
  orbifolds} defined by Borisov-Chen-Smith \cite{BCS}. In the next section, we recall the construction of those spaces
and see that the cohomology rings of all of these toric spaces in nice cases are computed as the quotient of the
Stanley-Reisner ring $\ZZ[K]$.

In this section, we give a criteria for the local freeness of the action of a subgroup $\sfG$ of $\sfT$ on $\calZ_K$
and a remark about the cohomology of orbifolds as stacks.
\begin{lem}\label{lemlf}
Let $n$ be the largest cardinality of a face in $K$. If a subgroup $\sfG \subset \sfT$ acts on $\calZ_K$ locally freely,
then $\dim \sfG \leq m-n$. Furthermore, if $n$ is the cardinality of maximal faces in $K$ (pure) and $\dim \sfG=m-n$,
then $\sfG$ acts on $\calZ_K$ locally freely if and only if $\sfT_{\sigma} := \U(1)^{\sigma} \times \{1\}^{[m]\backslash
  \sigma} \subset \sfT=\U(1)^m$ surjects to $\sfR:=\sfT/\sfG$ for all maximal faces $\sigma$.
\end{lem}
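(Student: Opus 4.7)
The plan is to translate local freeness of the $\sfG$-action into a statement about intersections of $\sfG$ with the coordinate subgroups $\sfT_\tau$, and then to read off both claims from the dimension formula for closed subgroups of the abelian group $\sfT$. For any $z \in \calZ_K$, set $\tau(z) := \{i \in [m] : z_i = 0\}$; since $z$ lies in some $\sfD^\sigma \times (\partial \sfD)^{[m]\setminus \sigma}$ with $\sigma \in K$, we have $\tau(z) \subseteq \sigma$, so $\tau(z)$ is itself a face of $K$. The $\U(1)^m$-stabilizer of $z$ is exactly the product of the $\U(1)$-factors indexed by the zero coordinates of $z$, namely $\sfT_{\tau(z)}$, and every face $\tau \in K$ is realized as $\tau(z)$ by taking $z_i = 0$ for $i \in \tau$ and $z_i = 1$ otherwise. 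Consequently, $\sfG$ acts locally freely on $\calZ_K$ if and only if $\sfG \cap \sfT_\tau$ is finite for every face $\tau \in K$.

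For the upper bound, I would apply the dimension formula to $\sfT_\tau \cdot \sfG$, which is a closed subgroup of $\sfT$ (as the image under multiplication of the compact set $\sfT_\tau \times \sfG$):
\[
\dim \sfG + \dim \sfT_\tau \;=\; \dim(\sfT_\tau \sfG) + \dim(\sfG \cap \sfT_\tau) \;\leq\; m + \dim(\sfG \cap \sfT_\tau).
\]
Taking $\tau$ of maximal cardinality $n$ and using that $\sfG \cap \sfT_\tau$ is discrete yields $\dim \sfG \leq m - n$.

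For the furthermore part, purity of $K$ means every face is contained in a maximal face $\sigma$, and the containment $\sfT_\tau \subseteq \sfT_\sigma$ for $\tau \subseteq \sigma$ reduces the local freeness condition to the finiteness of $\sfG \cap \sfT_\sigma$ for each maximal face $\sigma$. Under the assumption $\dim \sfG = m-n$ and $|\sigma| = n$, the dimension formula above makes $\dim(\sfG \cap \sfT_\sigma) = 0$ equivalent to $\dim(\sfT_\sigma \sfG) = m$; the latter is in turn equivalent to $\sfT_\sigma \sfG = \sfT$ because $\sfT_\sigma \sfG$ is a closed subgroup of the connected torus $\sfT$, so its identity component is a subtorus of full dimension, forcing it to equal $\sfT$. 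Finally, $\sfT_\sigma \sfG = \sfT$ is precisely the surjectivity of the composition $\sfT_\sigma \hookrightarrow \sfT \twoheadrightarrow \sfR$, which completes the chain of equivalences. I do not expect any significant obstacle; the only mild care needed is that $\sfG$ may be disconnected, but the facts used (compactness of $\sfT_\sigma \sfG$, the dimension formula, and the passage from full dimension to equality with $\sfT$ via the identity component) all apply without any connectedness assumption on $\sfG$.
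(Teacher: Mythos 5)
Your proposal is correct and follows essentially the same route as the paper: both identify the stabilizers as the coordinate subgroups $\sfT_\sigma$, reduce local freeness to finiteness of $\sfG \cap \sfT_\sigma$ (at maximal faces, using purity), and obtain both claims by the dimension count for the multiplication map $\sfG \times \sfT_\sigma \to \sfT$, with surjectivity of $\sfT_\sigma \to \sfR$ equivalent to $\sfT_\sigma\sfG = \sfT$. Your write-up merely makes explicit some details the paper leaves implicit (closedness of $\sfT_\sigma\sfG$ and the passage from full dimension to equality with $\sfT$).
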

\begin{proof} 
Let $\sigma \in K$ such that $|\sigma|=n$. Let $0_{\sigma} \in \sfD^{\sigma}\times (\partial \sfD)^{[m]\backslash
  \sigma}$ such that $i$-th component of $0_{\sigma}$ for $i\in\sigma$ is $0 \in \sfD$. Then the stabilizer of
$0_{\sigma}$ in $\sfT$ is $T_{\sigma} := \U(1)^{\sigma} \times \{1\}^{[m]\backslash \sigma} \subset
\sfT=\U(1)^m$. Consider the map $\theta:\sfG \times \sfT_{\sigma} \to \sfT, (g,t) \mapsto gt$. The kernel of this map
$\theta$ is finite if and only if the stabilizer of $0_{\sigma}$ in $\sfG$ is finite. Therefore the local freeness of
the $\sfG$-action implies that the dimension of the image of $\theta$ is $\dim \sfG + |\sigma| = \dim \sfG + n$. Thus
$\dim \sfG \leq m-n$ since $\dim \sfT =m$. To prove the latter claim, note that the local freeness of the
$\sfG$-action is equivalent to that the stabilizer of $0_{\sigma}$ in $\sfG$ is finite for each maximal face
$\sigma$. Since $\sfG=m-n$ and $\dim \sfT_{\sigma}$, $\sfG \cap \sfT_{\sigma}$ is zero dimensional if and only if
$\sfT_{\sigma} \to \sfR$ is surjective.
\end{proof}
\begin{rem}\label{stackcohomology}
In \cite{Edidin10}, Edidin defined the integral cohomology of a stack and showed that if the stack is given as a global
quotient stack $[M/\sfG]$, then $H^*([M/\sfG],\ZZ)$ is canonically isomorphic to $H^*_{\sfG}(M,\ZZ)$. If $\sfG$ acts
locally freely on $M$, the quotient stack $[M/\sfG]$ is an orbifold. The cohomology of an orbifold $[M/\sfG]$ as a stack
is then $H_{\sfG}(M,\ZZ)$. On the other hand, the projection map $B\sfG\times_{\sfG} M \to M/\sfG$ where $M/\sfG$ is the
quotient topological space induces an isomorphism $H_{\sfG}^*(M,\QQ) \cong H^*(M/\sfG,\QQ)$ since the fiber is ``$\QQ$-acyclic".
If $\sfG$ acts freely on $M$, then $H^*([M/\sfG],\ZZ) \cong H_{\sfG}^*(M,\ZZ) \cong H^*(M/\sfG,\ZZ)$.

If $\sfL$ acts on $M$ and $\sfG$ is a subgroup of $\sfL$ that acts on $M$ locally freely, we have the action of
$\sfK:=\sfL/\sfG$ on the orbifold $[M/\sfG]$. In this case, there is an isomorphism of stacks $[[M/\sfG]/\sfK] \cong
[M/\sfL]$ and we can define $H_{\sfK}^*([M/\sfG], \ZZ) := H^*([M/\sfL],\ZZ) = H^*_{\sfL}(M,\ZZ)$ (c.f. \cite{Rom,
  LerMal}).
\end{rem}
\begin{rem}
The following is a useful criteria for the connectedess of $\sfG$. Let $B$ be the integer matrix induced from the quotient map $\sfT \to \sfR$. Then $\sfG$ is connected if and only if $B: \ZZ^m\to \ZZ^n$ is surjective.
\end{rem}
\subsection{Quasi-toric Orbfiolds}\label{PoddarSarkar}
Quasi-toric manifolds are introduced and studied by Davis-Januszkiewicz \cite{DavisJanuszkiewicz91} and its orbifold
counterpart is studied by Poddar-Sarkar \cite{PoddarSarkar10}. Let $\Delta$ be a simple polytope of dimensional $n$ in
$\sfR^n$. Let $H_1,\cdots, H_m$ be the facets of $\Delta$ and for a face $F_{\sigma} = \cup_{i\in\sigma} H_i$, let
$\sfT_{\sigma} := \U(1)^{\sigma} \times \{1\}^{[m]\backslash \sigma} \subset \sfT=\U(1)^m$. Define $\calZ_{\Delta} :=
\sfT \times \Delta/\!\!\sim$ where $(t_1,p)\sim (t_2,q)$ if and only if $p=q$ is contained in a relative interior of
$F_{\sigma}$ and $t_1t_2^{-1} \in \sfT_{\sigma}$ (c.f. Definition 6.1 \cite{BP}). It is known that $\calZ_{\Delta}$ is a
smooth manifold (c.f. Lemma 6.1 \cite{BP}). Let $B$ be an integer $n\times m$ matrix such that for each vertex
$H_{i_1}\cap \cdots \cap H_{i_n}$ of $\Delta$, the corresponding columns $\lambda_{i_1},\cdots, \lambda_{i_n}$ form a
basis of $\QQ^n$. By the assumption, $B$ defines a surjective map $\sfT \surj \sfR$ also denoted by $B$. Let $\sfG$ be
the kernel of $B$. A \emph{quasi-toric orbifold} for the pair $(\Delta, B)$ is defined as the quotient stack
$[\calZ_{\Delta}/\sfG]$. Here note that the assumption on $B$ is equivalent to the local freeness of the $\sfG$-action
on $\calZ_{\Delta}$ (See Lemma \ref{lemlf}).  Since $\calZ_{\Delta}$ is $\sfT$-equivariantly homeomorphic to
$\calZ_{K_{\Delta}}$ where $K_{\Delta}$ is the simplicial complex associated to $\Delta$ (see Section 6.2. \cite{BP}),
the quasi-toric orbifold $[\calZ_{\Delta}/\sfG]$ is topologically the quotient of the moment angle complex
$\calZ_{K_{\Delta}}$ by $\sfG$. As a consequence and by Remark \ref{stackcohomology}, the $\sfR$-equivariant cohomology
of the quasi-toric orbifold $[\calZ_{\Delta}/\sfG]$ is
\[
H_{\sfR}^*([\calZ_{\Delta}/\sfG],\ZZ) \cong H_{\sfT}^*(\calZ_{\Delta},\ZZ) \cong H^*_{\sfT}(\calZ_{K_{\Delta}}, \ZZ) \cong \ZZ[K_{\Delta}].
\]
The rational cohomology ring of the quasi-toric orbifold is computed by Poddar-Sarkar \cite{PoddarSarkar10}:
\begin{thm}[Poddar-Sarkar]\label{qtQ}
If $[\calZ_{\Delta}/\sfG]$ is a quasi-toric orbifold, $H^*([\calZ_{\Delta}/\sfG], \QQ) \cong {\QQ[K_{\Delta}]}/{\lan u_1,\cdots, u_n\ran}$.
\end{thm}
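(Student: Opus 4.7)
The plan is to deduce Theorem \ref{qtQ} by combining Theorem B with the Cohen-Macaulay property of $\QQ[K_\Delta]$. By Remark \ref{stackcohomology} we have $H^*([\calZ_\Delta/\sfG];\QQ) \cong H^*_\sfG(\calZ_\Delta;\QQ)$, so it suffices to show that the latter is $\QQ[K_\Delta]/\langle u_1,\ldots,u_n\rangle$. The proofs of Theorems A and B carry over to $\QQ$-coefficients verbatim, since the arguments in Section \ref{ba} are purely module-theoretic, and I would invoke these $\QQ$-versions.

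The first step is to show that $\QQ[K_\Delta]$ is Cohen-Macaulay of Krull dimension $n$. The complex $K_\Delta$ associated to a simple $n$-polytope $\Delta$ is a simplicial $(n-1)$-sphere, so by Reisner's criterion (equivalently Stanley's theorem that $k[K]$ is Cohen-Macaulay for $K$ a simplicial sphere) the claim holds.

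The second step is to show that $u_1,\ldots,u_n$ form a linear system of parameters on $\QQ[K_\Delta]$. The defining condition on $B$ - that for every vertex $H_{i_1}\cap\cdots\cap H_{i_n}$ of $\Delta$ the columns $\lambda_{i_1},\ldots,\lambda_{i_n}$ form a $\QQ$-basis of $\QQ^n$ - translates, dually, to the statement that for each maximal face $\sigma = \{i_1,\ldots,i_n\}$ of $K_\Delta$ the composition $\QQ[\sfR^*] \inc \QQ[\sfT^*] \surj \QQ[x_i \mid i\in\sigma]$ is an isomorphism. A short computation then shows that for every maximal face $\sigma$,
\[
\QQ[K_\Delta]\big/\langle u_1,\ldots,u_n,\; x_j \mid j\notin\sigma\rangle \;\cong\; \QQ,
\]
so $\QQ[K_\Delta]/\langle u_1,\ldots,u_n\rangle$ is finite dimensional over $\QQ$ and $u_1,\ldots,u_n$ is a l.s.o.p. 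Because $\QQ[K_\Delta]$ is Cohen-Macaulay, any l.s.o.p. is automatically a regular sequence, and hence $\Tor_1^{\QQ[\sfR^*]}(\QQ[K_\Delta],\QQ) = 0$. The $\QQ$-version of Theorem B then yields $H^*_\sfG(\calZ_\Delta;\QQ) \cong \QQ[K_\Delta]/\langle u_1,\ldots,u_n\rangle$, as desired.

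The main obstacle is the combinatorial verification that $u_1,\ldots,u_n$ is a linear system of parameters; this requires the careful translation between the basis condition at vertices of $\Delta$ and the finite-dimensionality of the quotient $\QQ[K_\Delta]/\langle u_1,\ldots,u_n\rangle$. The Cohen-Macaulay input is classical, and the passage from a regular sequence to the vanishing of $\Tor_1$ is immediate from the Koszul resolution of $\QQ$ over $\QQ[\sfR^*]$. Note that over $\ZZ$ this strategy breaks down because $\ZZ[K_\Delta]$ need not be Cohen-Macaulay over $\ZZ[\sfR^*]$ - this is precisely the discrepancy that motivates the entire paper.
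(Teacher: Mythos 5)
Your argument is correct, but it is a genuinely different route from the paper's: the paper does not prove Theorem \ref{qtQ} at all, it quotes it from Poddar--Sarkar \cite{PoddarSarkar10}, where it is established by working directly with the quasi-toric orbifold (a cell-by-cell/spectral sequence computation over $\QQ$). You instead re-derive it from the paper's own machinery: identify $H^*([\calZ_\Delta/\sfG];\QQ)$ with $H^*_{\sfG}(\calZ_\Delta;\QQ)\cong H^*_{\sfG}(\calZ_{K_\Delta};\QQ)$ (Remark \ref{stackcohomology} plus the $\sfT$-equivariant homeomorphism $\calZ_\Delta\cong\calZ_{K_\Delta}$, which you should cite explicitly since Theorem \ref{main} is stated for moment angle complexes), then use the $\QQ$-version of Theorem \ref{main} (Remark \ref{mainoverQ}) together with Reisner/Stanley Cohen--Macaulayness of $\QQ[K_\Delta]$ and the l.s.o.p.\ property of $u_1,\dots,u_n$. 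This is not circular, since Theorems \ref{Franz} and \ref{main} are proved via Franz's homological algebra and nowhere invoke Theorem \ref{qtQ}, so your derivation is legitimate and in fact a nice consistency check showing the cited result is a special case of Theorem B; what the citation buys the paper is that Section \ref{bg} stays purely motivational and independent of the later sections. One small compression to flag: the displayed isomorphisms $\QQ[K_\Delta]/\langle u_1,\dots,u_n,\,x_j\mid j\notin\sigma\rangle\cong\QQ$ do not by themselves formally imply finite-dimensionality of $\QQ[K_\Delta]/\langle u_1,\dots,u_n\rangle$; you need the standard fact that the zero locus of the Stanley--Reisner ideal is the union of the coordinate subspaces $\CC^\sigma$ over the (maximal) faces $\sigma$, so that the vertex condition forces the zero locus of $I_{K_\Delta}+\langle u_1,\dots,u_n\rangle$ to be the origin (equivalently, invoke Stanley's l.s.o.p.\ criterion for face rings). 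With that reference supplied, and the passage from regular sequence to $\Tor_1^{\QQ[\sfR^*]}(\QQ[K_\Delta],\QQ)=0$ via the Koszul resolution (Corollary \ref{p22}), the proof is complete; your closing remark about the failure over $\ZZ$ is also accurate and consistent with the paper's motivation.
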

Here $u_j=\sum_{i=1}^mB_{ji}x_i \in \ZZ[x_1,\cdots, x_m]$ and we identify $H^*(B\sfR;\ZZ)=\ZZ[u_1,\cdots, u_n]$. A
quasi-toric orbifold given by $(\Delta,B)$ is a \emph{quasi-toric manifold} if and only if for each vertex $H_{i_1}\cap
\cdots \cap H_{i_n}$ of $\Delta$, the corresponding columns $\lambda_{i_1},\cdots, \lambda_{i_n}$ of $B$ form a
$\ZZ$-basis. In this case, the isomorphism holds with $\ZZ$-coefficients:
\begin{thm}[Davis-Januszkiewicz \cite{DavisJanuszkiewicz91}]\label{DJsmooth}
If $[\calZ_{\Delta}/\sfG]$ is a quasi-toric manifold, $H^*([\calZ_{\Delta}/\sfG], \ZZ)$ is isomorphic to
$\ZZ[K_{\Delta}]/\lan u_1,\cdots, u_n\ran$. Moreover $H^*([\calZ_{\Delta}/\sfG], \ZZ)$ has no $\ZZ$-torsion (which follows from
\cite[Theorem 3.1]{DavisJanuszkiewicz91} and the fact that a quasi-toric manifold is closed and orientable).
\end{thm}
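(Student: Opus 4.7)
The plan is to derive the theorem from Corollary C together with a freeness result for the Stanley--Reisner ring of a simplicial sphere. Applied to $\calX=[\calZ_\Delta/\sfG]$, which was identified in the paragraph preceding the theorem with $[\calZ_{K_\Delta}/\sfG]$, Corollary C gives
\[
H^*(\calX;\ZZ)\cong \Tor^*_{\ZZ[\sfR^*]}(\ZZ[K_\Delta],\ZZ).
\]
By Theorem B, the desired identification with $\ZZ[K_\Delta]/\lan u_1,\dots,u_n\ran$ is equivalent to $\Tor_1^{\ZZ[\sfR^*]}(\ZZ[K_\Delta],\ZZ)=0$, i.e.\ to $u_1,\dots,u_n$ being a $\ZZ[K_\Delta]$-regular sequence. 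In fact I would aim to prove the stronger statement that $\ZZ[K_\Delta]$ is a \emph{free} $\ZZ[u_1,\dots,u_n]$-module; this yields both the regular sequence property and the fact that the quotient by $u_1,\dots,u_n$ is a free $\ZZ$-module, giving the torsion-freeness at the same time.

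The key input is the quasi-toric manifold hypothesis: at every vertex $H_{i_1}\cap\cdots\cap H_{i_n}$ of $\Delta$, the columns $\lambda_{i_1},\dots,\lambda_{i_n}$ of $B$ form a $\ZZ$-basis. Translating to the maximal face $\sigma=\{i_1,\dots,i_n\}$ of $K_\Delta$, this means that the surjection $\ZZ[x_1,\dots,x_m]\surj \ZZ[x_i:i\in\sigma]$ (killing $x_j$ for $j\notin\sigma$) sends $u_1,\dots,u_n$ to a $\ZZ$-basis of the degree-two part; consequently $\ZZ[x_i:i\in\sigma]$ is a free $\ZZ[u_1,\dots,u_n]$-module of rank one, with the integrality guaranteed by unimodularity.

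To globalize, I would use that the dual of the simple polytope $\Delta$ is a simplicial polytope whose boundary is the shellable simplicial sphere $K_\Delta$. A shelling $\sigma_1,\dots,\sigma_N$ produces a filtration of $\ZZ[K_\Delta]$ whose successive quotients are free $\ZZ[u_1,\dots,u_n]$-modules of rank one indexed by the restriction faces $R(\sigma_k)$; the local unimodularity from the previous step ensures the filtration splits integrally, exhibiting $\ZZ[K_\Delta]$ as a free $\ZZ[u_1,\dots,u_n]$-module with basis $\bigl\{\prod_{i\in R(\sigma_k)} x_i\bigr\}_{k=1}^{N}$. Combined with Corollary C and Theorem B this gives the first assertion; since the quotient $\ZZ[K_\Delta]/\lan u_1,\dots,u_n\ran$ is then a free $\ZZ$-module, the torsion-freeness is immediate.

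The main obstacle is the integrality at the globalization step. Over $\QQ$, Reisner's criterion together with the fact that $|K_\Delta|$ is a sphere already gives Cohen--Macaulayness of $\QQ[K_\Delta]$, and any linear system of parameters yields freeness (this is exactly what underlies Poddar--Sarkar's Theorem \ref{qtQ}). Upgrading to $\ZZ$ requires that every successive quotient in the shelling introduces no torsion, which is precisely guaranteed by the $\ZZ$-basis hypothesis forcing each local change-of-basis determinant to be $\pm 1$. A second route, matching the parenthetical remark in the statement, is to invoke Davis--Januszkiewicz's CW-decomposition of $\calZ_\Delta/\sfG$ with only even-dimensional cells, deduce torsion-freeness from closedness and orientability, and then obtain the ring structure from the rational statement by universal coefficients.
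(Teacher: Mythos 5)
Your proposal is essentially correct, but note that the paper itself does not prove this statement: Theorem \ref{DJsmooth} is imported from Davis--Januszkiewicz, with the torsion-freeness justified only by the parenthetical appeal to their Theorem 3.1 together with closedness and orientability. What you give is a genuinely different, self-contained route through the paper's own machinery: Corollary C (Theorem \ref{Franz} plus Remark \ref{stackcohomology}, with $\sfG$ acting freely so the stack cohomology is honest manifold cohomology) reduces everything to $\Tor^*_{\ZZ[\sfR^*]}(\ZZ[K_{\Delta}],\ZZ)$, and you kill the higher $\Tor$'s by proving integral freeness of $\ZZ[K_{\Delta}]$ over $\ZZ[u_1,\dots,u_n]$ from a Bruggesser--Mani shelling of $K_{\Delta}=\partial \Delta^*$ combined with unimodularity of the columns of $B$ at each vertex. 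This is not circular: the proofs of Theorems \ref{Franz} and \ref{main} use only the equivariant isomorphism $H^*_{\sfT}(\calZ_K;\ZZ)\cong\ZZ[K]$ and Franz's homological machinery, never Theorem \ref{DJsmooth}, which the paper invokes only afterwards (Corollary \ref{quasiDM}, Section 5). Your route also buys more than the citation does for the paper's later needs: it directly establishes the freeness of $\ZZ[K_{\Delta}]$ over $\ZZ[\sfR^*]$ that Section 5 otherwise deduces from Theorem \ref{DJsmooth} together with Proposition \ref{free1}, and it makes torsion-freeness transparent, since $\ZZ[K_{\Delta}]/\lan u_1,\dots,u_n\ran$ becomes visibly a free $\ZZ$-module on the restriction monomials. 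Three small refinements: the splitting of the shelling (co)filtration is automatic once each subquotient $x_{R(\sigma_k)}\cdot\ZZ[x_i : i\in\sigma_k]$ is free, hence projective, over $\ZZ[u_1,\dots,u_n]$ --- unimodularity is what makes these subquotients free of rank one, not what splits the extensions; Corollary C gives only a module isomorphism, so to recover Davis--Januszkiewicz's ring isomorphism you should add that $\Tor_1=0$ makes $\iota^*_{\sfR}$ a surjection of rings (Theorem \ref{main}) annihilating $u_1,\dots,u_n$, and then compare finitely generated graded pieces; and your alternative second route is loose at the end, since torsion-freeness plus the rational statement does not by itself yield the integral isomorphism via universal coefficients --- one still needs the surjectivity of $\iota^*_{\sfR}$, i.e.\ the same even-degree argument as in Theorem \ref{main}.
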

Since $\calZ_{\Delta}$ is $\sfT$-equivariantly homeomorphic to $\calZ_{K_{\Delta}}$, we have
\begin{cor}\label{quasiDM}
If $K_{\Delta}$ and $B$ give a quasi-toric orbifold, then $H_{\sfG}^*(\calZ_{K_{\Delta}};\QQ) \cong \QQ[K_{\Delta}]/\lan u_1,\cdots, u_n\ran$. If $K_{\Delta}$ and $B$ give a quasi-toric manifold, then $H_{\sfG}^*(\calZ_{K_{\Delta}};\ZZ) \cong \ZZ[K_{\Delta}]/\lan u_1,\cdots, u_n\ran$.
\end{cor}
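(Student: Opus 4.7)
The plan is to read off the corollary directly from results already assembled in Section \ref{PoddarSarkar}, with no new computation needed; the statement is essentially a translation from stack cohomology to $\sfG$-equivariant cohomology combined with the two cited theorems of Poddar--Sarkar and Davis--Januszkiewicz.

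First I would use the basic identification recalled earlier in Section \ref{PoddarSarkar}: for a simple polytope $\Delta$ with associated simplicial complex $K_{\Delta}$, the space $\calZ_{\Delta}$ is $\sfT$-equivariantly homeomorphic to $\calZ_{K_{\Delta}}$. Since $\sfG \subset \sfT$, this homeomorphism is automatically $\sfG$-equivariant, and therefore induces an isomorphism
\[
H_{\sfG}^*(\calZ_{K_{\Delta}};R) \;\cong\; H_{\sfG}^*(\calZ_{\Delta};R)
\]
for any coefficient ring $R$. Next I would invoke Remark \ref{stackcohomology}, which via Edidin's definition of the cohomology of a global quotient stack gives the canonical identification $H^*([\calZ_{\Delta}/\sfG];R) \cong H_{\sfG}^*(\calZ_{\Delta};R)$; this holds whenever $\sfG$ acts, and by Lemma \ref{lemlf} together with the hypothesis on $B$ the action on $\calZ_{\Delta}$ is locally free, so the quotient stack is an orbifold.

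For the rational statement, I would then simply plug into Theorem \ref{qtQ} of Poddar--Sarkar:
\[
H_{\sfG}^*(\calZ_{K_{\Delta}};\QQ) \;\cong\; H_{\sfG}^*(\calZ_{\Delta};\QQ) \;\cong\; H^*([\calZ_{\Delta}/\sfG];\QQ) \;\cong\; \QQ[K_{\Delta}]/\langle u_1,\dots,u_n\rangle.
\]
For the integral statement, the additional hypothesis that $(\Delta,B)$ defines a quasi-toric manifold means that at each vertex of $\Delta$ the relevant columns of $B$ form a $\ZZ$-basis, which upgrades the local freeness of the $\sfG$-action to genuine freeness. Thus $[\calZ_{\Delta}/\sfG]$ is represented by the smooth manifold $\calZ_{\Delta}/\sfG$, and Theorem \ref{DJsmooth} of Davis--Januszkiewicz provides the integral isomorphism $H^*([\calZ_{\Delta}/\sfG];\ZZ) \cong \ZZ[K_{\Delta}]/\langle u_1,\dots,u_n\rangle$, which then transfers back to $H_{\sfG}^*(\calZ_{K_{\Delta}};\ZZ)$ by the same chain of identifications as above.

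The only point requiring any care is checking that the isomorphisms respect the $\ZZ[\sfT^*]$-module structure and, in particular, that the linear forms $u_j = \sum_i B_{ji} x_i$ are identified correctly on both sides. This is automatic because all of the identifications used are equivariant and the Stanley--Reisner generators $x_i$ correspond to the characters of the coordinate circles of $\sfT$ under each step. I do not expect any serious obstacle here: the corollary is really a bookkeeping statement that relabels Poddar--Sarkar and Davis--Januszkiewicz in the language of $\sfG$-equivariant cohomology of $\calZ_{K_{\Delta}}$, in the form most useful for the applications in the rest of the paper.
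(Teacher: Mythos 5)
Your proposal is correct and follows essentially the same route as the paper, which derives Corollary \ref{quasiDM} immediately from the $\sfT$-equivariant homeomorphism $\calZ_{\Delta}\cong\calZ_{K_{\Delta}}$, the stack-cohomology identification of Remark \ref{stackcohomology}, and Theorems \ref{qtQ} and \ref{DJsmooth}. The extra remarks you make (local freeness via Lemma \ref{lemlf}, freeness in the manifold case, compatibility of the $u_j$) are consistent with the paper's setup and add nothing contentious.
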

\subsection{Compact Symplectic Toric Orbifolds} \label{LermanTolman}
Lerman-Tolman \cite{LT} classified compact symplectic toric (effective) orbifolds in terms of \emph{labeled
  polytopes}. A labeled polytope $(\Delta, \sfb)$ is a rational simple polytope $\Delta$ in $\RR^n$ with each facet
$H_i, i=1,\cdots,m$ is labeled by a positive integer $\sfb_i$. If $\beta_i$ is the integral primitive inward normal
vector to each facet $H_i$, then by assigning the integer matrix $B=[\sfb_1\beta_1,\cdots, \sfb_m\beta_m]$, we obtain a
quasi-toric orbifold given by $(\Delta, B)$. Here the symplectic structure on $[\calZ_{\Delta}/\sfG]$ comes from
identifying $\calZ_{\Delta}$ with the level set for the reduction of $\CC^m$ by the action of $\sfG$. A \emph{compact
  symplectic toric manifolds} is given by the labeled polytope such that $\sfb_i=1, \forall i=1,\cdots,m$ and such that
for for each vertex $H_{i_1}\cap \cdots \cap H_{i_n}$ of $\Delta$, the corresponding primitive normal vectors
$\beta_{i_1},\cdots, \beta_{i_n}$ of $B$ form a $\ZZ$-basis. This is exactly the Delzant condition in the classification
of compact symplectic manifolds.
\subsection{Algebraic Toric (effective) orbifold (a.k.a. toric Deligne-Mumford stack)}\label{XK}
Let $K$ be a pure simplicial complex on $[m]$. Define a fan $\Sigma_K$ in $\RR^m$ where each cone is generated by the
part of standard basis $\sfe_i, i\in \sigma$ for each $\sigma \in K$. The corresponding toric variety $X_{\Sigma_K}$ is
a smooth open subvariety of $\CC^m$ that is exactly the complement of subspace arrangements given by $K$ (c.f. Chapter 8
\cite{BP}). There is a natural embedding of $\calZ_K$ into $X_{\Sigma_K}$ and it is shown that
\begin{prop}[\cite{BP} Proposition 8.9]\label{propret}
There is a $\sfT$-equivariant deformation retract for $\calZ_K \subset X_{\Sigma_K}$ and, in particular, $H_{\sfG}^*(\calZ_K;\ZZ) \cong H_{\sfG_{\CC}}^*(X_{\Sigma_K};\ZZ)$.
\end{prop}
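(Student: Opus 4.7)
The plan splits into two parts: (1) establish a $\sfT$-equivariant deformation retract $X_{\Sigma_K} \to \calZ_K$ (the geometric statement of [BP, Prop.~8.9]); (2) deduce the cohomology isomorphism. For part (1), I would work via the $\sfT$-invariant moment map $\mu\colon \CC^m \to \RR^m$, $\mu(z) = (|z_1|^2,\dots,|z_m|^2)$. Under $\mu$, both spaces appear as saturated preimages: $X_{\Sigma_K}=\mu^{-1}(D)$ with $D = \{r\in\RR_{\geq 0}^m : \{i : r_i=0\}\in K\}$ and $\calZ_K=\mu^{-1}(c)$ with $c = \{r\in[0,1]^m : \{i : r_i<1\}\in K\}$.

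The retract is built by first constructing a deformation retraction $h_t\colon D \to D$ with $h_1(D)=c$, chosen so that the support pattern $\{i : r_i=0\}$ is \emph{preserved throughout} the homotopy---for instance by a stratum-by-stratum radial rescaling that pushes each nonzero coordinate into $[0,1]$ and drives some of them to $1$ until the subset where the value is strictly less than $1$ becomes a face of $K$. Because the zero pattern is preserved, the retract lifts $\sfT$-equivariantly by keeping the argument of each nonzero coordinate fixed and scaling only its modulus: a coordinate that vanishes stays vanishing, so no choice of argument has to be invented along the way. This produces the required $\sfT$-equivariant deformation retract $X_{\Sigma_K} \to \calZ_K$.

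For part (2), restricting along $\sfG\subset\sfT$ makes this retract $\sfG$-equivariant, so it induces a homotopy equivalence $E\sfG\times_\sfG \calZ_K \simeq E\sfG\times_\sfG X_{\Sigma_K}$, and hence $H^*_\sfG(\calZ_K;\ZZ) \cong H^*_\sfG(X_{\Sigma_K};\ZZ)$. Finally, since $\sfG$ is the maximal compact subgroup of its complexification $\sfG_\CC$ and the inclusion $\sfG\hookrightarrow \sfG_\CC$ is a homotopy equivalence (polar decomposition), the induced map $B\sfG\to B\sfG_\CC$ is a homotopy equivalence; because the $\sfG_\CC$-action on $X_{\Sigma_K}$ restricts to the given $\sfG$-action, the standard comparison of Borel constructions gives $E\sfG\times_\sfG X_{\Sigma_K} \simeq E\sfG_\CC\times_{\sfG_\CC} X_{\Sigma_K}$, and therefore $H^*_\sfG(X_{\Sigma_K};\ZZ) \cong H^*_{\sfG_\CC}(X_{\Sigma_K};\ZZ)$. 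Combining with Step~2 yields the proposition.

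The main obstacle is the concrete construction in Step~1: producing a continuous deformation retract on $D$ that is globally well-defined across the face-stratification and \emph{strictly} preserves the zero pattern, so that it admits the desired $\sfT$-equivariant lift. The combinatorial reason this is possible is that $K$ is closed under taking subsets, which ensures that the piecewise-defined retract carries each stratum of $D$ into the corresponding stratum of $c$ consistently across chart overlaps in $\{U_\sigma\}_{\sigma\in K}$.
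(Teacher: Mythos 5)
Your part (2) is correct and is exactly what the proposition's ``in particular'' clause amounts to: a $\sfT$-equivariant deformation retract restricts to a $\sfG$-equivariant one, hence a homotopy equivalence of Borel constructions, and since $\sfG$ is the maximal compact subgroup of $\sfG_\CC$ with $\sfG_\CC/\sfG$ contractible, the comparison $E\sfG_\CC\times_{\sfG}X_{\Sigma_K}\to E\sfG_\CC\times_{\sfG_\CC}X_{\Sigma_K}$ gives $H^*_{\sfG}(X_{\Sigma_K};\ZZ)\cong H^*_{\sfG_\CC}(X_{\Sigma_K};\ZZ)$. Be aware, though, that the paper does not prove the retract at all: the geometric statement is quoted from Buchstaber--Panov (their Proposition 8.9), so your part (1) is an attempt to reprove precisely the cited result. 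Your reduction to the orbit space is the right frame: the identifications $X_{\Sigma_K}=\mu^{-1}(D)$, $\calZ_K=\mu^{-1}(c)$ are correct, and the lifting mechanism is sound (equivariance forces zero coordinates to stay zero, and a zero-pattern-preserving homotopy of $D$ lifts canonically by rescaling moduli with arguments fixed; continuity at the zero locus holds because the lifted modulus is $\sqrt{h_t(\mu(z))_i}$).

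The genuine gap is the one you flag yourself: the deformation $h_t\colon D\to D$ with $h_0=\mathrm{id}$, $h_1(D)=c$, fixing $c$ and preserving zero patterns, is asserted rather than constructed, and ``drives some of them to $1$'' hides the whole difficulty. Which coordinates get pushed to $1$ must be chosen continuously in the point and consistently across the face stratification, and the obvious candidate rules fail. Clamping each coordinate into $[0,1]$ does not land in $c$: for $K$ two disjoint vertices on $[2]$, the point $(0.5,0.5)\in D$ is fixed but $\{1,2\}\notin K$. Rescaling by the sup-norm, $r\mapsto r/\max_i r_i$, lands in $c$ only when the set of non-maximal coordinates happens to be a face: for $K=\{\varnothing,\{1\},\{2\},\{3\},\{1,2\}\}$ the point $(0.3,1,0.4)$ is fixed by this rule yet $\{1,3\}\notin K$, so it is not in $c$. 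Producing a globally consistent $h_t$ is exactly the nontrivial content of the cited result; Buchstaber--Panov obtain it by an inductive, piece-by-piece construction over the simplices of $K$ (equivalently over the cubical decomposition of the orbit space, one block $\sfD^{\sigma}\times(\partial\sfD)^{[m]\setminus\sigma}$ at a time), not by a single radial rescaling. So either carry out such an inductive construction explicitly, or do as the paper does and cite Buchstaber--Panov for part (1), keeping your part (2) as the actual argument for the cohomology isomorphism.
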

The algebraic toric orbifolds studied by Borisov-Chen-Smith \cite{BCS} are defined by the \emph{stacky fan}. There they
consider possibly noneffecive orbifolds. In this paper, since we are interested in the effective case, we simplify the
stacky fan and call it the \emph{labeled fan}. A \emph{labeled fan} $(\Sigma, \sfb)$ is a simplicial fan in $\RR^n$ with
each ray $\rho_i$ is labeled by a positive integer $\sfb_i$ where $i=1,\cdots,m$. Let $K$ be the simplicial complex
associated to $\Sigma$. Let $\beta_i$ be the integral primitive generator of each ray $\rho_i$, define an integral
$n\times m$ matrix $B:=[\sfb_1\beta_1,\cdots, \sfb_m\beta_m]$, and let $\sfG$ be the kernel of the induced map of tori
$B:\sfT \to \sfR$. The \emph{toric Deligne-Mumford(DM) stack} associated to a labeled fan $(\Sigma, \sfb)$ is defined as
the quotient stack $\calX_{\Sigma,\sfb}:=[X_{\Sigma_K}/\sfG_{\CC}]$ where $\sfG_{\CC}$ is the complexification of
$\sfG$.

A toric DM stack $\calX_{\Sigma,\sfb}$ (or its labeled fan $(\Sigma,\sfb)$) is \emph{complete} if the fan is complete
i.e. the union of cones is $\RR^n$. A toric DM stack $\calX_{\Sigma,\sfb}$ is a non-singular toric variety if and only
if the labels $\sfb_i=1$ and the fan $\Sigma$ is non-singular, i.e. for each maximal cone, the primitive generators of
the rays in the cone, $\beta_{i_1},\cdots,\beta_{i_n}$, form a $\ZZ$-basis. In this case, we call $(\Sigma,\sfb)$
non-singular. For a non-singular and complete labeled fan, we have the following classical result:

\begin{thm}[Danilov\cite{Danilov78}, Jurkiewicz\cite{Jurkiewicz85}]
If $(\Sigma,\sfb)$ is non-singular and complete, then $H^*([\calX_{K_\Sigma}/\sfG_{\CC}],\ZZ) \cong \ZZ[K_{\Sigma}]/\lan
u_1,\cdots, u_n\ran$. Furthermore, it has no $\ZZ$-torsion (\cite{Danilov78} Theorem 10.8).
\end{thm}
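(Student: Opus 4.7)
The plan is to deduce this directly from Corollary C and Theorem B, combined with the classical fact that a smooth complete toric variety has only even cells. The non-singularity hypothesis on $(\Sigma,\sfb)$---meaning $\sfb_i=1$ for all $i$ and the primitive generators of each maximal cone form a $\ZZ$-basis of $\ZZ^n$---implies that the stabilizer of $0_\sigma$ in $\sfG$ is trivial for every maximal face $\sigma$, so by the proof of Lemma \ref{lemlf}, $\sfG$ acts \emph{freely} (not merely locally freely) on $\calZ_{K_\Sigma}$, and similarly $\sfG_\CC$ acts freely on $X_{\Sigma_K}$. Hence $[X_{\Sigma_K}/\sfG_\CC]$ is representable by the smooth complete toric variety $X_\Sigma$.

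Applying Proposition \ref{propret} and Remark \ref{stackcohomology} together with Theorem A gives
\[
H^*(X_\Sigma;\ZZ) \;\cong\; H^*_{\sfG_\CC}(X_{\Sigma_K};\ZZ) \;\cong\; H^*_\sfG(\calZ_{K_\Sigma};\ZZ) \;\cong\; \Tor^*_{\ZZ[\sfR^*]}(\ZZ[K_\Sigma],\ZZ).
\]
By Theorem B, the refined identification $H^*(X_\Sigma;\ZZ) \cong \ZZ[K_\Sigma]/\lan u_1,\ldots,u_n\ran$ is equivalent to the vanishing $H^{odd}(X_\Sigma;\ZZ) = 0$, and if this vanishing is established through a cellular model with only even cells, torsion-freeness will come for free.

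To obtain $H^{odd}(X_\Sigma;\ZZ) = 0$, I would invoke the Bia{\l}ynicki-Birula decomposition associated to a generic cocharacter $\lambda:\CC^* \to \sfT_\CC$ (i.e.\ one whose image meets no proper $\sfT_\sigma$ non-trivially). Completeness of $\Sigma$ ensures every point of $X_\Sigma$ flows under $\lambda$ to a $\sfT_\CC$-fixed point, and these fixed points are in bijection with the maximal cones; smoothness ensures each attracting locus is an affine space $\CC^{d}$. The resulting filtration gives a finite CW structure on $X_\Sigma$ built entirely from even-dimensional cells, so the cellular cochain complex has zero differential, and $H^*(X_\Sigma;\ZZ)$ is a free abelian group concentrated in even degrees. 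Feeding this back through Theorem B yields both the isomorphism with the quotient ring and the absence of $\ZZ$-torsion.

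The main obstacle is the technical verification of the Bia{\l}ynicki-Birula stratification as a genuine integral CW complex with the correct attaching behavior; this is classical but external to the algebraic framework developed here. Once granted, the rest of the argument is a direct application of the homological machinery (Theorem A and Theorem B) of this paper, and no further computation is needed.
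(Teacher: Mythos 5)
This theorem is quoted in the paper as a classical result of Danilov and Jurkiewicz (with the torsion-freeness referred to \cite[Theorem 10.8]{Danilov78}); the paper gives no proof of it, so there is no internal argument to compare against. Your derivation is legitimate and, importantly, not circular: Theorem \ref{Franz} rests on Franz's formality theorem and the Davis--Januszkiewicz computation $H^*_{\sfT}(\calZ_K;\ZZ)\cong\ZZ[K]$, and Theorem \ref{main} rests on Theorem \ref{Franz}, none of which uses the Danilov--Jurkiewicz theorem. Your freeness observation (unimodularity of $B$ on each maximal cone forces $\sfG\cap\sfT_\sigma=\{1\}$, hence a free action and a representable quotient) is correct, and the chain $H^*(X_\Sigma;\ZZ)\cong H^*_{\sfG_\CC}(X_{\Sigma_K};\ZZ)\cong H^*_{\sfG}(\calZ_{K_\Sigma};\ZZ)\cong\Tor^*_{\ZZ[\sfR^*]}(\ZZ[K_\Sigma],\ZZ)$ is exactly Proposition \ref{propret}, Remark \ref{stackcohomology} and Theorem \ref{Franz}. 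Two remarks on what your route actually buys. First, the input you defer to classical geometry --- that a smooth complete toric variety has free cohomology concentrated in even degrees via the Bia{\l}ynicki-Birula cells --- is precisely the core of the classical Danilov--Jurkiewicz argument, so your proof is best described as: classical additive computation plus the paper's Tor-machinery to recover the ring presentation; the torsion-freeness already follows from the cell decomposition and does not need Theorems A/B at all. Second, be careful with two technical points: (i) for complete but non-projective fans, filtrability of the Bia{\l}ynicki-Birula decomposition is not automatic from the general theory and must be arranged by ordering the maximal cones along a generic linear functional (this is how \cite{Danilov78} proceeds), which is exactly the ``classical but external'' step you flag; and (ii) Theorem \ref{Franz} is only an isomorphism of graded $\ZZ[\sfT^*]$-modules, so to get the statement as a ring isomorphism you should invoke the fact that condition (2) of Theorem \ref{main} arises from surjectivity of the ring homomorphism $\iota^*_{\sfR}$ (Remark \ref{surj=quot}), whose kernel contains $\lan u_1,\cdots,u_n\ran$ and, by comparison of the finitely generated graded pieces, equals it. With those points made explicit, your argument is a correct alternative derivation.
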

In Proposition 3.7 \cite{BCS}, it is proved that the \emph{coarse moduli space} (underlying algebraic variety) for
$[X_{K_{\Sigma}}/\sfG_{\CC}]$ is exactly the toric variety $X_{\Sigma}$ (See also \cite{Cox95}) \footnote{Even if the
  toric orbifold is a non-trivial orbifold, its coarse moduli space could be a non-singular toric variety. The simplest
  example of such a case may be the weighted projective space $[\CP^1_{12}]=[\CC^2\backslash \{(0,0)\} /\CC^{\times}]$
  where the action of $\CC^{\times}$ is weighted by $(1,2)$. Its coarse moduli space is simply $\CP^1$.}. Thus for more
general cases, one still has an isomorphism with $\QQ$-coefficients:
\begin{thm}[Danilov\cite{Danilov78}]\label{dmsQ}
If $(\Sigma,\sfb)$ is complete, then $H^*([X_{\Sigma_K}/\sfG_{\CC}],\QQ) \cong \QQ[K]/\lan u_1,\cdots, u_n\ran$ where
$u_j=\sum_{i=1}^mB_{ji}x_i$.
\end{thm}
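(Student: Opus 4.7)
The plan is to derive this statement from Theorem A and Theorem B by passing to $\QQ$-coefficients, combined with the classical Cohen--Macaulay property of Stanley--Reisner rings of simplicial spheres.

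First, I would identify the rational cohomology of the stack with the $\sfG$-equivariant cohomology of the moment angle complex. By Remark \ref{stackcohomology}, $H^*([X_{\Sigma_K}/\sfG_{\CC}],\QQ) \cong H^*_{\sfG_{\CC}}(X_{\Sigma_K},\QQ)$, and Proposition \ref{propret} then gives $H^*_{\sfG_{\CC}}(X_{\Sigma_K},\QQ) \cong H^*_{\sfG}(\calZ_K,\QQ)$. Theorem A, which applies with $\QQ$-coefficients in place of $\ZZ$ by the same formality argument, yields
\[
H^*([X_{\Sigma_K}/\sfG_{\CC}],\QQ) \cong \Tor_{\QQ[\sfR^*]}^*(\QQ[K],\QQ).
\]
By Theorem B, the right-hand side is isomorphic to $\QQ[K]/\lan u_1,\dots,u_n\ran$ exactly when $\Tor_1^{\QQ[\sfR^*]}(\QQ[K],\QQ)=0$, equivalently when $u_1,\dots,u_n$ is a $\QQ[K]$-regular sequence. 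So the proof reduces to this single algebraic assertion under the hypothesis that $(\Sigma,\sfb)$ is complete.

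To establish this regularity I would argue as follows. Because $\Sigma$ is a complete simplicial fan in $\RR^n$, the underlying simplicial complex $K=K_\Sigma$ is a simplicial $(n-1)$-sphere, and Reisner's criterion implies $\QQ[K]$ is Cohen--Macaulay of Krull dimension $n$. In a graded Cohen--Macaulay algebra over a field, every homogeneous linear system of parameters is automatically a regular sequence, so it suffices to verify that $u_1,\dots,u_n$ is a linear system of parameters, i.e.\ that $\QQ[K]/\lan u_1,\dots,u_n\ran$ is finite-dimensional over $\QQ$.

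The main obstacle is this last step, which I would handle via Stanley's l.s.o.p.\ criterion for Stanley--Reisner rings: linear forms $u_1,\dots,u_n$ form an l.s.o.p.\ for $\QQ[K]$ if and only if for every maximal face $\sigma=\{i_1,\dots,i_n\}\in K$ the $n\times n$ submatrix $(B_{j,i_k})_{j,k}$ is invertible. Completeness of $(\Sigma,\sfb)$ asserts precisely that each maximal cone is $n$-dimensional, so its primitive generators $\beta_{i_1},\dots,\beta_{i_n}$ are $\QQ$-linearly independent; multiplication by the positive labels $\sfb_{i_k}$ preserves independence, giving the required nonvanishing of every such minor. Thus $u_1,\dots,u_n$ is an l.s.o.p., hence a regular sequence in $\QQ[K]$, and the theorem follows.
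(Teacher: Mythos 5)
Your argument is mathematically sound, but it is a genuinely different route from the paper's. The paper does not reprove this statement at all: it quotes Danilov's classical computation of the rational cohomology of a complete simplicial toric variety, and gets from the stack to that variety via the identification of the coarse moduli space of $[X_{\Sigma_K}/\sfG_{\CC}]$ with $X_\Sigma$ (Proposition 3.7 of \cite{BCS}) together with the fact (Remark \ref{stackcohomology}) that the map to the coarse quotient is a rational cohomology isomorphism. You instead deduce the statement from the paper's own Theorems A and B (in their $\QQ$-coefficient form, which is legitimate by Remark \ref{formality} and Remark \ref{mainoverQ}, or by flatness of $-\otimes\QQ$ applied to the Koszul complex), reducing everything to the single algebraic claim that $u_1,\dots,u_n$ is a $\QQ[K]$-regular sequence, which you verify by Reisner/Munkres Cohen--Macaulayness of the sphere triangulation $K=K_\Sigma$ coming from a complete simplicial fan, the standard fact that a homogeneous system of parameters in a graded Cohen--Macaulay algebra is a regular sequence, and the l.s.o.p.\ criterion checked on maximal faces (using purity of $K$, which completeness guarantees, and linear independence of the scaled generators $\sfb_{i_k}\beta_{i_k}$ of each top-dimensional cone). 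I checked that this is not circular: Theorem \ref{Franz} and Theorem \ref{main} are proved via Franz's homological machinery and Franz--Puppe, with no input from Theorem \ref{dmsQ} or Corollary \ref{algDM}. What your route buys is a self-contained, purely combinatorial-algebraic proof inside the paper's framework that does not invoke Danilov or the coarse moduli space comparison, and it makes explicit that the vanishing of odd rational cohomology for complete labeled fans is exactly the Cohen--Macaulay property of sphere triangulations; what the paper's route buys is brevity and historical context, since the result really is the classical one being cited as background. Two small points to tighten: completeness literally says the cones cover $\RR^n$, from which you should explicitly infer both that all maximal cones are $n$-dimensional (purity) and that $|K|\cong S^{n-1}$; and when you pass from invertibility of the submatrices at maximal faces to the l.s.o.p.\ condition at all faces, note that the restriction to a subface of a maximal face is a further coordinate projection, so surjectivity persists.
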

Theorem \ref{propret} and Remark \ref{stackcohomology} therefore imply
\begin{cor}\label{algDM}
If $K$ and $B$ are given by a complete labeled fan, then $H_{\sfG}^*(\calZ_K;\QQ) \cong \QQ[K]/\lan u_1,\cdots, u_n\ran$. If $K$ and $B$ are given by a complete and non-singular labeled fan, then $H_{\sfG}^*(\calZ_K;\ZZ) \cong \ZZ[K]/\lan u_1,\cdots, u_n\ran$.
\end{cor}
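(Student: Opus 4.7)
The plan is to derive this as an immediate corollary by concatenating three isomorphisms already at our disposal. First, I would apply Proposition \ref{propret} to replace the equivariant cohomology of the moment angle complex by that of the toric variety complement:
\[
H_{\sfG}^*(\calZ_K;A) \cong H_{\sfG_{\CC}}^*(X_{\Sigma_K};A), \qquad A \in \{\ZZ,\QQ\}.
\]
The proposition is stated with $\ZZ$-coefficients, but the $\sfT$-equivariant deformation retract underlying it is purely topological and produces the same isomorphism with any coefficients.

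Next, I would invoke Remark \ref{stackcohomology}, which identifies the $\sfG_{\CC}$-equivariant cohomology of $X_{\Sigma_K}$ with the cohomology of the quotient stack:
\[
H_{\sfG_{\CC}}^*(X_{\Sigma_K};A) \cong H^*([X_{\Sigma_K}/\sfG_{\CC}];A) = H^*(\calX_{\Sigma,\sfb};A).
\]
Finally, I would apply the Danilov--Jurkiewicz calculations quoted earlier in the section. For any complete labeled fan, Theorem \ref{dmsQ} gives $H^*(\calX_{\Sigma,\sfb};\QQ) \cong \QQ[K]/\langle u_1,\ldots,u_n\rangle$, establishing the first statement. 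In the non-singular complete case, the integral Danilov--Jurkiewicz theorem cited just above gives $H^*(\calX_{\Sigma,\sfb};\ZZ) \cong \ZZ[K]/\langle u_1,\ldots,u_n\rangle$, establishing the second.

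The only genuine bookkeeping point I would have to confirm is that the elements $u_j$ appearing on the two sides of the composed isomorphism are literally the same. This is built into the setup: the integer matrix $B$ defines the surjection $\sfT \twoheadrightarrow \sfR$, whose dual is the embedding $\ZZ[\sfR^*] \hookrightarrow \ZZ[\sfT^*]$ carrying $u_j$ to $\sum_{i=1}^m B_{ji} x_i$, which is exactly the linear relation imposed on the Stanley--Reisner ring on the right-hand side. Since each of Steps 1--3 is an isomorphism of $\ZZ[\sfT^*]$-algebras and not merely of graded modules, the multiplicative structure propagates consistently through the chain. For this reason I do not anticipate any serious obstacle; the content lies entirely in the prior results being stitched together, with the only care needed in tracking the ring structure and the identification of the linear generators $u_j$.
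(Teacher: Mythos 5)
Your proposal is correct and follows essentially the same route as the paper, which derives Corollary \ref{algDM} precisely by combining Proposition \ref{propret}, Remark \ref{stackcohomology}, Theorem \ref{dmsQ}, and the integral Danilov--Jurkiewicz theorem for the non-singular complete case. Your added checks (coefficient-independence of the deformation retract and the identification of the $u_j$ via $B$) are sound bookkeeping that the paper leaves implicit.
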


Note that the underlying combinatorial structures for quasi-toric orbifolds and toric DM stacks are both simplicial
complexes. Any symplectic toric orbifold can be made into an algebraic one by taking the normal fans to the polytopes.
However, not all quasi-toric orbifolds can be made algebraic. A toric DM stack associated to a \emph{polytopal fan} can be
made into a symplectic toric orbifold but there is a toric DM stack associated to a \emph{non-polytopal} fan. Such a
toric DM stack can not be realized even as a quasi-toric orbifold.

In the light of Theorem \ref{Franz} and Corollary \ref{quasiDM} and \ref{algDM}, it is natural to study the following question
\begin{question}
When is $H_{\sfG}^*(\calZ_K;\ZZ)$ a quotient of the Stanley-Reisner ring for a general subgroup $\sfG$?
\end{question}
Our answer to this question is Theorem \ref{main}. Also we will see in Section \ref{mainsection} that when the $\sfG$-action is locally free and $H_{\sfG}^*(\calZ_K;\QQ)$ is a quotient of Stanley-Reisner ring, then the dimension of $\sfG$ must be maximal.
 
\section{{\bf The Proof that $H_{\sfG}^*(\calZ_K;\ZZ) \cong \Tor^*_{\ZZ[\sfR^*]}(\ZZ[K],\ZZ)$}}\label{TheoremFranz}
In this section, we prove that $H_{\sfG}^*(\calZ_K;\ZZ)$ is isomorphic to $\Tor^*_{\ZZ[\sfR^*]}(\ZZ[K],\ZZ)$ as a graded module over $\ZZ[\sfT^*]$. The idea of the proof, especially to use the homological machinery developed in \cite{Franz02}, was communicated to us by Franz. Throughout, we will use terminology found in \cite{Franz02}.

We will use the following notation consistently throughout this paper unless otherwise specified:
\begin{notation}\label{notation}
Let $K$ be a simplicial complex on $[m]:=\{1,\cdots,m\}$ (possibly with ghost vertices) and let $\calZ_K$ be the
associated moment angle complex with the standard torus $\sfT:=\U(1)^m$-action. Let $\frakt:=\Lie(\sfT) =\RR^m$ and let
$\sfN_{\sfT} = \ZZ^m$ be the kernel of the exponential map $\frakt \to \sfT$. Let $\sfG \subset \sfT$ be a
(\emph{possibly disconnected}) subgroup of dimension $m-n$ and let $\sfR:=\sfT/\sfG$ be the quotient torus. We identify
$\sfR \cong \U(1)^n$ so that the quotient map $\sfT \to \sfR$ defines an integral $n\times m$ matrix $B$ which is viewed
as the surjective linear map $\frakt \to \frakr:=\Lie \sfR$. 

Let $\ZZ[\sfT^*]:=H^*(B\sfT;\ZZ) = \ZZ[x_1,\cdots, x_m]$ where $\{x_j\}$ is the standard basis of $\sfN_{\sfT}^*$ and
let $\ZZ[\sfR^*]:=H^*(B\sfR;\ZZ) = \ZZ[u_1,\cdots, u_n]$ where $\{u_i\}$ is the standard basis of $\sfN_{\sfR}^*$. We
regard $\ZZ[\sfR^*]$ as a subring of $\ZZ[\sfR^*]$ so that $u_i := \sum_{j=1}^m B_{ij}x_j$. The Stanley-Reisner ring
$\ZZ[K]$ is defined as the quotient of $\ZZ[\sfT^*]$ by the monomials corresponding to non-faces of $K$.
\end{notation}

\begin{defn}
$\sfM$ is an \emph{$H^*(B\sfR)$-module up to homotopy} if it is a module over the reduced cobar construction of $H^*(\sfR)$ (Section 4 \cite{Franz05i}). 
\end{defn}
\begin{thm}\label{Franz}
Under Notation \ref{notation},  there is an isomorphism of graded modules over $\ZZ[\sfT^*]$.
\[
\Theta_{\sfR}: H^*_{\sfG}(\calZ_K,\ZZ) \to \Tor^*_{\ZZ[\sfR^*]}(\ZZ[K],\ZZ)
\]
where the cohomological grading on the right hand side is given in Definition \ref{cohdegree}. 
\end{thm}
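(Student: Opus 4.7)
The approach is a relative Borel construction combined with an Eilenberg--Moore computation, driven by the formality machinery of Franz. The starting observation is that the quotient map $\sfT \twoheadrightarrow \sfR$ turns
\[
E\sfT \times_{\sfG} \calZ_K \longrightarrow E\sfT \times_{\sfT} \calZ_K
\]
into a principal $\sfR$-bundle: the residual $\sfR = \sfT/\sfG$-action on the total space is free because $\sfT$ already acts freely on $E\sfT$. This bundle is classified by the composite $E\sfT \times_{\sfT} \calZ_K \to B\sfT \to B\sfR$ induced by $\sfT \twoheadrightarrow \sfR$, and $E\sfT \times_{\sfG} \calZ_K$ is recovered as the homotopy pullback of $E\sfR \to B\sfR$ along this map.

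Applied to this pullback square, the Eilenberg--Moore formalism yields a spectral sequence converging to $H^*_\sfG(\calZ_K;\ZZ)$ whose $E_2$-page is
\[
E_2^{*,*} \;=\; \Tor^*_{H^*(B\sfR;\ZZ)}\!\bigl(H^*_{\sfT}(\calZ_K;\ZZ),\,\ZZ\bigr) \;=\; \Tor^*_{\ZZ[\sfR^*]}(\ZZ[K],\ZZ),
\]
using the contractibility of $E\sfR$ and the isomorphism $H^*_\sfT(\calZ_K;\ZZ) \cong \ZZ[K]$ recalled from \cite{DavisJanuszkiewicz91}. Throughout, the $\ZZ[\sfT^*]$-action is tracked naturally via the map of classifying spaces $B\sfT \to B\sfR$. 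To promote this into an honest isomorphism, and not just an $E_\infty$-identification with potential extension ambiguities, I would invoke Franz's theorem that $C^*(E\sfT \times_{\sfT} \calZ_K;\ZZ)$ is formal as an $H^*(B\sfT;\ZZ)$-module up to homotopy in the sense of \cite{Franz05i}. Given formality, one can replace cochains by the cohomology ring $\ZZ[K]$ inside the derived tensor product over the cobar construction of $C^*(B\sfR;\ZZ)$, whose homology then computes $\Tor_{\ZZ[\sfR^*]}^*(\ZZ[K],\ZZ)$ with the required $\ZZ[\sfT^*]$-module structure.

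The main technical obstacle is the integral formality input: working over $\QQ$ the argument would reduce to a more classical Eilenberg--Moore calculation, but the $\ZZ$-coefficient statement needed here depends on the strictification results of \cite{Franz02, Franz05i}, which provide an explicit $H^*(B\sfT;\ZZ)$-linear quasi-isomorphism between $C^*(E\sfT \times_{\sfT} \calZ_K;\ZZ)$ and $\ZZ[K]$. Once this is in hand, the remaining steps---tracking the bigrading and collapsing it into the total cohomological degree of Definition \ref{cohdegree}, checking naturality in $\sfG$, and confirming compatibility of the $\ZZ[\sfT^*]$-module structure through the map $B\sfT \to B\sfR$---are essentially formal bookkeeping, and together they assemble into the desired $\ZZ[\sfT^*]$-linear isomorphism $\Theta_\sfR$.
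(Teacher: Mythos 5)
Your proposal is correct in outline and follows essentially the same route as the paper: both identify $E\sfT\times_{\sfG}\calZ_K$ as a pullback of $E\sfR\to B\sfR$ along the map induced by $B\sfT\to B\sfR$, and both use Franz's twisted tensor product model (the strictified Eilenberg--Moore formalism of \cite{Franz02}) together with the integral formality of $C^*(E\sfT\times_{\sfT}\calZ_K)$ as an $H^*(B\sfT;\ZZ)$-module up to homotopy from \cite{Franz05i} to replace cochains by $\ZZ[K]$ and obtain $\Tor_{\ZZ[\sfR^*]}^*(\ZZ[K],\ZZ)$ without extension ambiguities. The paper just carries this out via an explicit auxiliary space $E\sfR\times_{\sfR}(E\sfT\times_{\sfG}\calZ_K)$ and a naturality diagram for the $\ZZ[\sfT^*]$-module structure, details you defer to bookkeeping.
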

\begin{proof}
We suppress the coefficient ring $\ZZ$. Given a map $p: Y \to B\sfR$, the twisted tensor product $C^*(Y)\otimes^{\sfR}
H^*(\sfR)$ is defined by (5.7) \cite{Franz02}. Proposition 5.2 in \cite{Franz02} states that there is a
quasi-isomorphism of differential graded (dg) $H_*(\sfR)$-modules $\Phi_Y^*: C^*(Y)\otimes^{\sfR} H^*(\sfR) \to C^*(Y\times^{B\sfR} E\sfR)$
where $Y\times^{B\sfR} E\sfR$ is a pullback of $E\sfR \to B\sfR$ along $p:Y \to B\sfR$.

Let $Y:=E\sfR\times_{\sfR}\left(E\sfT\times_{\sfG} \calZ_K\right)$ and $p: Y \to B\sfR$  is defined as the composition of maps
\[
\xymatrix{
E\sfR\times_{\sfR}\left(E\sfT\times_{\sfG} \calZ_K\right) \ar[r]_{\ \ \ \ \ \ \ \sfq_{\sfR}} &E\sfT \times_{\sfT} \calZ_K \ar[r]& B\sfT \to B\sfR,
}
\]
where $\sfq_{\sfR}$ is the projection to the 2nd and 3rd components, the second map is the projection to the 1st
component, and the last map is a classifying map for $B: \sfT \to \sfR$. We observe that $p$ is obtained by taking the
quotient of
\[
E\sfR \times \left(E\sfT\times_{\sfG} \calZ_K\right) \to E\sfT\times_{\sfG}\calZ_K \to E\sfT/\sfG \to E\sfR
\]
by the free actions of $\sfR$ on each space. This implies that $Y\times^{B\sfR} E\sfR = E\sfR \times \left(E\sfT\times_{\sfG} \calZ_K\right)$.

Now Proposition 5.2 \cite{Franz02} states that we have the quasi-isomorphism:
\[
\Phi^*_Y: C^*(E\sfR\times_{\sfR}\left(E\sfT\times_{\sfG} \calZ_K\right)) \otimes^{\sfR} H^*(\sfR) \to C^*(E\sfR\times \left(E\sfT\times_{\sfG} \calZ_K\right)).
\]
The homology of the right hand side is $H_{\sfG}^*(\calZ_K)$. On the left hand side, since $\sfR$ acts on
$E\sfT\times_{\sfG} \calZ_K$ freely, the fibers of $\sfq_{\sfR}$ are $E\sfR$ and therefore it induces a
quasi-isomorphism of $H^*(B\sfT)$-modules up to homotopy
\[
\sfq_{\sfR}^*: C^*(E\sfT\times_{\sfT}\calZ_K) \to C^*(E\sfR\times_{\sfR}\left(E\sfT\times_{\sfG} \calZ_K\right)),
\]
i.e. it is a homomorphism of dg $C^*(B\sfR)$-modules such that after taking homology, it becomes an isomorphism of
$H^*(B\sfR)$-modules. Theorem 1.1 \cite{Franz05i} implies that $C^*(E\sfT\times_{\sfT} \calZ_K)$ is formal as a
$H^*(B\sfT)$-module up to homotopy, i.e there is a sequence of quasi-isomorphisms connecting $C^*(E\sfT\times_{\sfT}
\calZ_K)$ to $H^*(E\sfT\times_{\sfT} \calZ_K)$ as dg modules over reduced cobar construction of $H^*(\sfT)$, and
therefore as dg modules over reduced cobar construction of $H^*(\sfR)$. Since the operation to take the twisted tensor
product $\otimes^{\sfR} H^*(B\sfR)$ and the homology of it preserves quasi-isomorphisms of $H^*(B\sfR)$-modules up to
homotopy (c.f. Theorem 8.20, \cite{McCleary}), the map $\sfq_{\sfR}^* $ induces a quasi-isomorphism
\[
\widetilde{\sfq_{\sfR}^*}: C^*(E\sfR\times_{\sfR}\left(E\sfT\times_{\sfG} \calZ_K\right)) \otimes^{\sfR} H^*(\sfR)  \to C^*(E\sfT\times_{\sfT}\calZ_K) \otimes^{\sfR} H^*(\sfR).
\]
and there is a sequence of quasi-isomorphisms connecting $C^*(E\sfT\times_{\sfT}\calZ_K) \otimes^{\sfR} H^*(\sfR)$ and
$H^*(E\sfT\times_{\sfT} \calZ_K) \otimes^{\sfR} H^*(\sfR)$. Since the homology of the complex $H^*(E\sfT\times_{\sfT}
\calZ_K) \otimes^{\sfR} H^*(\sfR)$ is $\Tor^*_{\ZZ[\sfR^*]}(\ZZ[K], \ZZ)$, we obtain the isomorphism $\Theta_{\sfR}$. We
summarize all in the following diagram:
\begin{small}
\[
\xymatrix{
H_{\sfG}^*(\calZ_K)  \ar[ddd]^{\cong}_{\Theta_{\sfR}}        &  C^*(E\sfR\times \left(E\sfT\times_{\sfG} \calZ_K\right))  \ar@{=>}[l]_{homology\ \ \ \ \ \ \ }          \\
& C^*(E\sfR\times_{\sfR}\left(E\sfT\times_{\sfG} \calZ_K\right)) \otimes^{\sfR}  H^*(\sfR)\ar[u]_{\Phi^*_{Y}} \ar[d]^{\widetilde{\sfq_{\sfR}^*}} \\
& C^*(E\sfT\times_{\sfT}\calZ_K) \otimes^{\sfR} H^*(\sfR) \ar@{<.>}[d]_{seq\ o\!f\ quasi-iso}\\
\Tor^*_{\ZZ[\sfR^*]}(\ZZ[K], \ZZ)               &  H^*(E\sfT\times_{\sfT} \calZ_K) \otimes^{\sfR} H^*(\sfR) \ar@{=>}[l]^{homology\ \ \ }.          \\
}
\]
\end{small}
The right vertical maps gives a sequence of quasi-isomorphisms of dg $H_*(\sfR)$-modules and at the both ends, we have the desired graded $\ZZ$-modules after taking homology.

To show that the map $\Theta_{\sfR}$ is a homomorphism of modules over $\ZZ[\sfT^*]$, it is sufficient to prove that the following diagram is commutative
\[
\xymatrix{
H^*_{\sfT}(\calZ_K,\ZZ) \ar[r]_{\Theta} \ar[d]_{\iota_{\sfR}^*}& \ZZ[K] \ar[d]_{\phi_{\sfR}}\\
H^*_{\sfG}(\calZ_K,\ZZ) \ar[r]_{\Theta_{\sfR}} &  \Tor^*_{\ZZ[\sfR^*]}(\ZZ[K],\ZZ) \\
}
\]
where $\phi_{\sfR}$ is the obvious map induced from the inclusion of Koszul complexes, $\iota_{\sfR}^*$ is the pullback
of the quotient map $\iota_{\sfR}: E\sfT \times_{\sfG} \calZ_K \to E\sfT\times_{\sfT} \calZ_K$ and $\Theta$ is the
isomorphism mentioned at (\ref{Z[T]}) Section \ref{bg}. Consider the map $\varphi: \sfR \to \one$ where $\one$ is the
trivial group. Since it satisfies the condition in Propostion 4.11 \cite{Franz02}, the naturality stated in Proposition
5.2 \cite{Franz02} implies that the following diagram commutes
\[
\xymatrix{
 C^*(E\sfT\times_{\sfT} \calZ_K) \ar[d]_{\bar\sfq_{\sfR}^*} & C^*(E\sfT\times_{\sfT} \calZ_K) \otimes^{\one} H^*(\one) \ar[l]  \ar[d]_{\bar\phi_{\sfR}}\\
  C^*(E\sfR\times \left(E\sfT\times_{\sfG} \calZ_K\right)) & C^*(E\sfR\times_{\sfR}\left(E\sfT\times_{\sfG} \calZ_K\right)) \otimes^{\sfR} H^*(\sfR) \ar[l] \\
}
\]
where $\bar\sfq_{\sfR}^*$ is the pullback of the projection $\bar\sfq_{\sfR}: E\sfR\times (E\sfT\times_{\sfG} \calZ_K)
\to E\sfT\times_{\sfT} \calZ_K$ and $\bar\phi_{\sfR}$ is the map induced by $\sfq_{\sfR}$ and $\varphi:\sfR \to
\one$. After taking the homology, $\bar\sfq_{\sfR}^*$ and $\bar\phi_{\sfR}$ naturally coincide with $\iota_{\sfR}^*$ and
$\phi_{\sfR}$ respectively.
\end{proof}
\begin{rem}\label{formality}
Let $X$ be any topological space with the $\sfT$-action such that $C^*(E\sfT\times_{\sfT}X;\ZZ)$ is formal as
$H^*(B\sfT;\ZZ)$-modules up to homotopy. Then Theorem \ref{Franz} also holds for $X$. Namely, the above proof can be
identically applied to this case and gives the isomorphism $\Theta_{\sfR}: H_{\sfG}^*(X;\ZZ) \cong
\Tor_{\ZZ[\sfR^*]}^*(H_{\sfT}^*(X;\ZZ),\ZZ)$.
\end{rem}
In terms of toric orbifolds discussed in Section \ref{bg}, we have 
\begin{cor}\label{cormain}
Let $\calX$ is a quasi-toric orbifold or effective toric Deligne-Mumford stack with associated $K$, $\sfR$ and $\sfT$ in
the sense of Section \ref{bg}. There is an isomorphism of graded modules over $\ZZ[\sfT^*]$
\[
H^*(\calX;\ZZ) \cong \Tor_{\ZZ[\sfR^*]}^*(\ZZ[K],\ZZ).
\]
\end{cor}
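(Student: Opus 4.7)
The plan is to reduce the statement, in both cases, to Theorem \ref{Franz} applied to an appropriate moment angle complex. By Remark \ref{stackcohomology}, the integral cohomology of a global quotient stack $[M/\sfH]$ is canonically identified with the equivariant cohomology $H^*_{\sfH}(M;\ZZ)$. Therefore, in each of the two cases covered by the corollary, it suffices to identify $H^*(\calX;\ZZ)$ with $H^*_{\sfG}(\calZ_K;\ZZ)$ for the relevant simplicial complex $K$ on $[m]$ and subgroup $\sfG \subset \sfT$, and then to invoke the isomorphism $\Theta_{\sfR}$ of Theorem \ref{Franz}.

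For a quasi-toric orbifold $\calX = [\calZ_\Delta/\sfG]$ associated to a pair $(\Delta, B)$ as in Section \ref{PoddarSarkar}, the moment angle manifold $\calZ_\Delta$ is $\sfT$-equivariantly homeomorphic to $\calZ_{K_\Delta}$, where $K_\Delta$ is the simplicial complex dual to the boundary of $\Delta$. Combined with Remark \ref{stackcohomology}, this yields
\[
H^*(\calX;\ZZ) \;\cong\; H^*_{\sfG}(\calZ_\Delta;\ZZ) \;\cong\; H^*_{\sfG}(\calZ_{K_\Delta};\ZZ),
\]
and Theorem \ref{Franz} applied with $K = K_\Delta$ completes this case.

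For an effective toric Deligne-Mumford stack $\calX = [X_{\Sigma_K}/\sfG_{\CC}]$ as in Section \ref{XK}, Proposition \ref{propret} supplies a $\sfT$-equivariant deformation retract $\calZ_K \hookrightarrow X_{\Sigma_K}$, which by restriction of the $\sfG_{\CC}$-action to the maximal compact subgroup $\sfG \subset \sfG_{\CC}$ identifies $H^*_{\sfG_{\CC}}(X_{\Sigma_K};\ZZ)$ with $H^*_{\sfG}(\calZ_K;\ZZ)$. Together with Remark \ref{stackcohomology} this gives $H^*(\calX;\ZZ) \cong H^*_{\sfG}(\calZ_K;\ZZ)$, and Theorem \ref{Franz} again concludes the argument.

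The only point that demands any verification is the compatibility of the identifications above with the $\ZZ[\sfT^*]$-module structure. This is essentially automatic: in every step the module structure is induced by the classifying map $E\sfT \times_\sfG \calZ_K \to B\sfT$ (or, in the Deligne-Mumford case, its counterpart for $E\sfT \times_\sfG X_{\Sigma_K}$), which is precisely the structure map used in formulating Theorem \ref{Franz}. Consequently there is no genuine obstacle beyond bookkeeping; the substantive content of the corollary is contained in Theorem \ref{Franz} itself, and the corollary is merely the translation of that theorem into the geometric language of toric orbifolds via the standard dictionary recalled in Section \ref{bg}.
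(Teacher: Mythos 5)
Your proposal is correct and follows exactly the route the paper intends: the corollary is presented there as an immediate consequence of Theorem \ref{Franz} once the stack cohomology is identified with $H^*_{\sfG}(\calZ_K;\ZZ)$ via Remark \ref{stackcohomology}, the equivariant homeomorphism $\calZ_\Delta \cong \calZ_{K_\Delta}$, and Proposition \ref{propret}. Nothing further is needed.
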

\begin{rem} Theorem \ref{Franz} (or Corollary \ref{cormain}) generalizes several results that have been proved. For example:
\begin{itemize}
\item[(1)] In case $\sfG = \sfT$, Theorem \ref{Franz} states that the $\sfT$-equivariant cohomology of $\calZ_K$ is the
  Stanley-Reisner ring of $K$, which is well-known (c.f.\cite{BP}). In case $\sfG = {1}$, then one recovers that the
  ordinary cohomology of $\calZ_K$ is the $\Tor$-algebra of $\ZZ[K]$ over $\ZZ[\sfT^*]$ (Theorem 7.6 \cite{BP}).  One
  may therefore view this result as interpolating between these extreme cases.
\item[(2)] If $\calZ_K/\sfG$ is a quasi-toric manifold, then one recovers Theorem 7.37 \cite{BP}.
\item[(3)] When $X_{\Sigma}:=X_{\Sigma_K}/\sfG_{\CC}$ is the coarse moduli for a toric orbifold, one recovers Theorem 1.2 \cite{Franz05i}:
\[
H^*(X_{\Sigma},\QQ) \cong H^*_{\sfG_{\CC}}(X_{\Sigma_K},\QQ) = H^*_{\sfG}(\calZ_K,\QQ) \cong \Tor^*_{\ZZ[\sfR^*]}(\ZZ[K],\ZZ) \otimes \QQ.
\]
\end{itemize}
\end{rem}
\section{{\bf Basics from commutative algebra}}\label{ba}
In this section, we collect some definitions and basic properties of graded modules over a polynomial ring and discuss the relations among them.
Throughout this section, $R$ will be a polynomial ring in variables $u_1,\dots,u_n$ (generated in degree $2$) over $k=\ZZ$ or $\QQ$, and $M$ will be a graded
$R$-module (though not necessarily finitely generated).  We will denote the ideal of $R$ generated by polynomials of positive degree by $R_+$.

Below we give brief names to several properties of $R$-modules so that we can refer to them later.
\begin{defn}  \label{def:ringConditions} For $M$ and $R$ as above, one says that:
\begin{itemize}
\item[($k$1)] $M$ is \emph{free over $R$} if $M \cong \oplus_{e \in E} R\cdot e$ and $R\cdot e \cong R$ as graded $R$-modules where $E$ is a subset of $M$;
\item[($k$2)] $M$ is \emph{flat over $R$} if $\Tor_{> 0}^R(M,N)=0$ for any (f.g.) module $N$;
\item[($k$3)] $M$ is \emph{torsion-free over $R$} if there is no torsion over $R$ ($x \in M$ is a torsion element over $R$ if $\cdot x : R \to R$ has non-trivial kernel);
\item[($k$4)] $M$ is \emph{a big Cohen-Macaulay $R$-module} if $\Tor_1^R(M,k) = 0$.
\end{itemize}
In general, one has the following implications.
\[
(k1) \Rightarrow (k2) \Rightarrow (k3), (k4)
\]
\end{defn}


We will see that in the case of interest to us, ($k$4) has the usual meaning in terms of regular sequences; see Proposition \ref{p24}.

\begin{defn}
A non-zero element $r \in R$ is \emph{$M$-regular} if $0 \to M \stackrel{r}{\longrightarrow} M$ is exact. A sequence of elements
$f_1,\dots, f_c\subset R$ is an \emph{$M$-regular sequence} if, for each $i\leq c$,  $f_i$ is $(M/(f_1,\dots,f_{i-1})M)$-regular.
\end{defn}
\begin{rem}
We call the condition ($k$4) in Definition \ref{def:ringConditions} \emph{big Cohen-Macaulay} because $\Tor_1^R(M,k) = 0$
is the same as saying that there exists a system of parameters of $R_+$ (namely, the variables of $R$; see Corollary \ref{cor:depthKoszul}) that is an $M$-regular sequence.
The `big' terminology is a reference to the fact that $M$ need not be finitely generated; see \cite[Chapter 8]{BHcomm}
\footnote{In the reference \cite{BHcomm}, it is mentioned that the existence of a big Cohen-Macaulay module over a local ring $R$ is an open problem.
This question is not interesting for $R$, since it is a (non-local) Cohen-Macaulay ring.}.
\end{rem}
\begin{defn}
Let $R_+$ be the ideal generated by the positive degree elements of $R$, and suppose that $M\not=IM$.  One defines the \emph{depth} (as well as \emph{grade}) of $M$ over $R$ by
\begin{equation}\label{depth}
\depth_R(M) := \grade(I,M) := \min\{ i \ |\ \Ext_R^i(R/I, M)\not=0\}.
\end{equation}
If $M = IM$, one sets $\depth_R(M)=\infty$.  When $M$ is finitely generated over $R$, this definition is the usual definition of the depth of $M$
and it is the length of maximal $M$-regular sequence in $R_+$ \cite{BHcomm}.
\end{defn}

Our goal for this section is to show that $\depth_R(M)$ is the length of the longest $M$-regular sequence in $R_+$ when $M$ is only assumed to be
finitely generated over some homomorphic image of $R$, and the $R$ action on $M$ factors through this homomorphic image.  This is recorded in the following
proposition.
\begin{prop}\label{p23}
Let $S = k[x_1,\dots,x_m]$, $M$ be a finitely generated graded $S$-module,
and $\varphi: R \to S$ be a graded ring homomorphism (so that $M$ is hence a graded $R$-module via $\varphi$).
Then all maximal $M$-regular sequences in $R_+$ have the same length $\depth_R(M)$. 
\end{prop}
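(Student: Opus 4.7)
The plan is to mimic the classical proof of the depth-regularity theorem (e.g.\ \cite[Theorem 1.2.5]{BHcomm}), which assumes $M$ is finitely generated over $R$. Inspecting that argument, the only place where finite generation of $M$ over $R$ is used is to obtain the finiteness of the set $\operatorname{Ass}_R(M)$ of associated primes, which is needed both to identify the set of $M$-zerodivisors in $R$ with $\bigcup_{\mathfrak{p}\in\operatorname{Ass}_R(M)}\mathfrak{p}$ and to apply graded prime avoidance. Everything else---the base case equivalence $\depth_R(M)=0\iff\Hom_R(R/R_+,M)\neq 0$, the choice of a homogeneous $M$-regular element $f\in R_+$ avoiding the finitely many $\mathfrak{p}\in\operatorname{Ass}_R(M)$, and the depth-drop $\depth_R(M/fM)=\depth_R(M)-1$ deduced from the long exact sequence of $\Ext_R(R/R_+,-)$ applied to $0\to M(-\deg f)\xrightarrow{f}M\to M/fM\to 0$ (using that $f\in R_+$ acts trivially on $\Ext_R^*(R/R_+,-)$)---goes through unchanged. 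Since $M/fM$ is still finitely generated over $S$, the induction iterates.

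The crux is therefore to establish finiteness of $\operatorname{Ass}_R(M)$ under the weaker hypothesis that $M$ is finitely generated over $S$. The plan is to prove the inclusion $\operatorname{Ass}_R(M)\subseteq\{\varphi^{-1}(\mathfrak{q}):\mathfrak{q}\in\operatorname{Ass}_S(M)\}$, which together with the finiteness of $\operatorname{Ass}_S(M)$ gives what is needed. Given $\mathfrak{p}=\Ann_R(x)\in\operatorname{Ass}_R(M)$ with $x$ homogeneous, one has $\mathfrak{p}=\varphi^{-1}(\Ann_S(x))$. The minimal primes $\mathfrak{q}_1,\dots,\mathfrak{q}_r$ of $\Ann_S(x)$ in $S$ lie in $\operatorname{Ass}_S(Sx)\subseteq\operatorname{Ass}_S(M)$, and their intersection equals $\sqrt{\Ann_S(x)}$. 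Applying $\varphi^{-1}$ and taking radicals in $R$ gives
\[
\mathfrak{p}=\sqrt{\mathfrak{p}}=\varphi^{-1}\!\left(\sqrt{\Ann_S(x)}\right)=\bigcap_{i=1}^r\varphi^{-1}(\mathfrak{q}_i).
\]
Since $\mathfrak{p}$ is prime and this is a finite intersection of primes of $R$, one of the $\varphi^{-1}(\mathfrak{q}_i)$ must equal $\mathfrak{p}$.

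The main obstacle is precisely this associated-primes lemma, since the inclusion $\operatorname{Ass}_R(M)\subseteq\varphi^{-1}(\operatorname{Ass}_S(M))$ is not entirely formal and requires passage through radicals and prime avoidance at the level of $R$. All subsequent depth-theoretic bookkeeping---graded prime avoidance applied to a finite set of graded primes, the identification of zerodivisors with the union of associated primes over the Noetherian ring $R$, and the $\Ext$ long exact sequence---is standard and makes no use of finite generation of $M$ over $R$. Note also that $\depth_R(M)$ is automatically finite when $M\neq R_+M$, since $\Ext_R^n(R/R_+,M)\cong M/R_+M\neq 0$ by the Koszul resolution of $R/R_+$, so the induction terminates.
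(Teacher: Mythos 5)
Your proof is correct, but it follows a different route than the paper. The paper deduces Proposition \ref{p23} from the more general Proposition \ref{p24} (arbitrary Noetherian $R\to S$ and arbitrary ideal $I$ with $IM\neq M$), whose proof runs through Rees-type lemmas: Lemma \ref{26} ($\Hom_R(N,M)=0$ iff $\Ann_R(N)$ contains an $M$-regular element) and Lemma \ref{27} ($\Hom_R(N,M/(x_1,\dots,x_r)M)\cong\Ext^r_R(N,M)$ for a regular sequence in $\Ann_R(N)$), so that a maximal regular sequence of length $r$ directly forces $\Ext^i_R(R/I,M)=0$ for $i<r$ and $\neq 0$ for $i=r$. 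You instead run the classical induction of \cite[Theorem 1.2.5]{BHcomm}: prime avoidance to produce a regular element when $\depth>0$, and the depth-drop $\depth_R(M/fM)=\depth_R(M)-1$ from the long exact sequence of $\Ext_R(R/R_+,-)$, noting that $M/fM$ is again finitely generated over $S$ and that $\Ext^n_R(R/R_+,M)\cong M/R_+M\neq 0$ (graded Nakayama over $S$) makes the induction terminate. Both arguments hinge on the same nonformal input, the finiteness of $\operatorname{Ass}_R(M)$: the paper gets it by pulling back a primary decomposition from $S$ (Remark \ref{rem:assMFinite}), while you prove $\operatorname{Ass}_R(M)\subseteq\{\varphi^{-1}(\mathfrak{q}):\mathfrak{q}\in\operatorname{Ass}_S(M)\}$ via contraction of the minimal primes of $\Ann_S(x)$ and radicals; your argument for this is correct (and arguably more explicit). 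What the paper's route buys is the extra generality of Proposition \ref{p24}, which it states separately; what your route buys is a self-contained, more elementary treatment of the graded case actually needed, avoiding Lemma \ref{27}. One cosmetic point: your base-case phrase ``$\depth_R(M)=0\iff\Hom_R(R/R_+,M)\neq 0$'' is the definition of depth; the substantive base-case claim is that this holds iff the empty sequence is a maximal regular sequence, which is exactly where your finiteness of $\operatorname{Ass}_R(M)$ plus prime avoidance enters, as you indicate.
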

This proposition is a special case of the following propositon, whose proof will come after some lemmas.
\begin{prop}\label{p24}
Let $\varphi: R \to S$ be a homorphism of Noetherian rings, $M$ a finitely generated $S$-module and $I$ an ideal of $R$ with $IM\not=M$. Then 
\[
\grade(I,M) := \min\{i \ |\ \Ext^i_R(R/I, M) \not=0\}
\]
agrees with the length of all maximal $M$-regular sequences in $I$.
\end{prop}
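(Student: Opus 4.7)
The strategy is to mimic the classical argument for finitely generated modules over a Noetherian ring (as in \cite{BHcomm}) but carefully check that it goes through under the weaker hypothesis that $M$ is finitely generated only over the $R$-algebra $S$, not over $R$ itself. The whole thing turns on one observation: the theory of associated primes for $M$ transfers from $S$ to $R$ by pulling back along $\varphi$, which is exactly what we need to handle zero-divisors in $R$.

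\textbf{Step 1 (Zero-divisors form a finite union of primes in $R$).} Since $M$ is finitely generated over the Noetherian ring $S$, the set $\Ass_S(M)$ is finite and the set of $S$-zero-divisors on $M$ equals $\bigcup_{P\in\Ass_S(M)} P$. An element $r\in R$ is an $M$-zero-divisor iff $\varphi(r)\in S$ is, so the set of $R$-zero-divisors on $M$ is the finite union $\bigcup_{P\in\Ass_S(M)}\varphi^{-1}(P)$ of primes of $R$. This replaces the role of $\Ass_R(M)$, which is not available in general.

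\textbf{Step 2 (Base case $\grade(I,M)=0$).} I claim that $\Hom_R(R/I,M)=0$ iff $I$ contains an $M$-regular element. The ``$\Leftarrow$'' direction is immediate: a nonzero element of $\Hom_R(R/I,M)$ corresponds to a nonzero $m\in M$ with $Im=0$, making every element of $I$ an $M$-zero-divisor. For the converse, if every $r\in I$ is an $M$-zero-divisor then $I\subseteq\bigcup_{P\in\Ass_S(M)}\varphi^{-1}(P)$, and prime avoidance (applicable since the union is finite) gives $I\subseteq\varphi^{-1}(P_0)$ for some $P_0\in\Ass_S(M)$. Choosing $m\in M$ with $\Ann_S(m)=P_0$, we get $\varphi(I)\cdot m\subseteq P_0\cdot m=0$, yielding a nonzero element of $\Hom_R(R/I,M)$.

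\textbf{Step 3 (Reducing grade by a regular element).} If $x\in I$ is $M$-regular, apply $\Ext_R^*(R/I,-)$ to the short exact sequence $0\to M\xrightarrow{x}M\to M/xM\to 0$. Since $x\in I$ acts as zero on $\Ext_R^i(R/I,M)$ (it annihilates $R/I$), the connecting homomorphisms in the long exact sequence become surjective/injective in the right spots, and one reads off $\grade(I,M/xM)=\grade(I,M)-1$. The condition $I(M/xM)\neq M/xM$ is preserved: otherwise $M=xM+IM\subseteq IM$, contradicting $IM\neq M$. So the inductive hypothesis applies to $M/xM$.

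\textbf{Step 4 (Induction and conclusion).} Induct on $\grade(I,M)$. If $\grade(I,M)=0$, Step 2 shows $I$ has no $M$-regular element, so the empty sequence is the unique maximal $M$-regular sequence in $I$. If $\grade(I,M)>0$, Step 2 produces an $M$-regular $x\in I$, and by Step 3 plus the inductive hypothesis, every maximal $(M/xM)$-regular sequence in $I$ has length $\grade(I,M)-1$; prepending $x$ gives length $\grade(I,M)$, and conversely any maximal $M$-regular sequence in $I$ reduces after one step to a maximal $(M/xM)$-regular sequence. The main obstacle is honestly just Step 2: everything else is standard homological bookkeeping, but the equivalence between ``$I$ consists of zero-divisors'' and ``$\Hom_R(R/I,M)\neq 0$'' is exactly where one might fear that the non-finite-generation of $M$ over $R$ causes trouble, and the fix is the pullback-of-associated-primes trick of Step 1.
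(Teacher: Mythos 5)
Your proof is correct, and its essential new ingredient is the same one the paper uses: the set of elements of $R$ that are zero-divisors on $M$ is a finite union of primes because it is pulled back from $\operatorname{Ass}_S(M)$ (the paper's Remark \ref{rem:assMFinite}), so prime avoidance applies; your Step 2 is the paper's Lemma \ref{26} specialized to $N=R/I$, proved somewhat more directly, since you build the nonzero homomorphism from an element $m$ with $\Ann_S(m)=P_0$ instead of localizing at an associated prime as the paper does. Where you genuinely differ is in the homological bookkeeping: the paper invokes a dimension-shifting lemma (Lemma \ref{27}, stated without proof) to identify $\Ext^i_R(R/I,M)$ with $\Hom_R(R/I,M/(x_1,\dots,x_i)M)$ along a maximal regular sequence, whereas you run the standard long-exact-sequence induction that drops the grade by one; this makes your argument more self-contained, at the cost of two small points to tidy. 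First, in Step 4 the ``conversely'' clause should pass to $M/y_1M$, where $y_1$ is the first element of the given maximal sequence, not to $M/xM$ for the particular $x$ produced by Step 2; since your Step 3 applies to any $M$-regular element of $I$, this is only a change of wording. Second, inducting on $\grade(I,M)$ tacitly assumes that this number is finite; it is cleaner to iterate Step 3 along the given maximal sequence $y_1,\dots,y_s$, obtaining $\grade(I,M/(y_1,\dots,y_s)M)=\grade(I,M)-s$, which equals $0$ by Step 2 and maximality, so that $\grade(I,M)=s$ and finiteness comes out as a conclusion rather than an assumption (the paper sidesteps this by starting from a maximal sequence, as you can too).
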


The fact that allows one to extend the usual finitely generated setup to the generality above is the following lemma.  
\begin{lem}\label{26}
Let $\varphi:R\to S$ be a homomorphism of graded Noetherian rings, $M$ a finitely generated $S$-module, and $N$ a finitely generated $R$-module. Then $\Hom_R(N,M) = 0$ if and only if $\Ann_R(N)$ contains a non-zero divisor on $M$. 
\end{lem}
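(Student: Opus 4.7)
The $(\Leftarrow)$ direction will be immediate: if $r \in \Ann_R(N)$ is an $M$-regular element, then for any $R$-linear map $f\colon N \to M$ and any $n \in N$ we have $r\cdot f(n) = f(rn) = 0$, and $M$-regularity of $r$ forces $f(n)=0$. My strategy for $(\Rightarrow)$ is to argue the contrapositive: assuming every element of $\Ann_R(N)$ is a zero-divisor on $M$, I will construct a nonzero $R$-linear map $N \to M$.

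The first and conceptually crucial step is to deal with the fact that $M$ is finitely generated over $S$ rather than over $R$, which \emph{a priori} could make the set of associated primes of $M$ as an $R$-module infinite and block prime avoidance. The workaround is to note that an element $r\in R$ is a zero-divisor on $M$ iff $\varphi(r)$ is a zero-divisor on $M$ as an $S$-module; since $\mathrm{Ass}_S(M)$ is finite, this shows the set of zero-divisors of $M$ as an $R$-module is the finite union $\bigcup_{Q \in \mathrm{Ass}_S(M)} \varphi^{-1}(Q)$ of primes of $R$. Prime avoidance then yields $\Ann_R(N) \subset P := \varphi^{-1}(Q)$ for some $Q = \Ann_S(m_0) \in \mathrm{Ass}_S(M)$, so that $\Ann_R(m_0) = P$ and there is an $R$-linear injection $R/P \hookrightarrow M$ sending $\bar 1 \mapsto m_0$.

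After this reduction, the plan is to exhibit a nonzero element of $\Hom_R(N, R/P)$ and post-compose with $R/P \hookrightarrow M$. For this I will localize at $P$. Since $\Ann_R(N)\subset P$, we have $P \in \Supp(N)$, so $N_P \neq 0$; Nakayama's lemma over the local ring $R_P$ then produces a nonzero $\kappa(P)$-vector space $N_P/PN_P$ that admits a surjection onto $\kappa(P)=(R/P)_P$. This gives a nonzero element of $\Hom_{R_P}(N_P,(R/P)_P)$, which, by finite presentation of $N$ over the Noetherian ring $R$, equals $\Hom_R(N,R/P)_P$; hence $\Hom_R(N,R/P)\neq 0$, as required.

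The main obstacle I anticipate is the very first step: without the observation that the zero-divisors of $M$ over $R$ still form a \emph{finite} union of primes in $R$ (via pullback of $\mathrm{Ass}_S(M)$ through $\varphi$), the classical prime avoidance argument would break down, and one would be forced to work harder or impose extra hypotheses on $M$. Once this finiteness is in hand, the remaining construction is a routine combination of Nakayama's lemma with the compatibility of $\Hom$ and localization for finitely presented modules.
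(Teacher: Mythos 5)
Your proof is correct and takes essentially the same route as the paper's: the easy direction is identical, and for the converse both arguments extract finiteness of the relevant primes from the $S$-module structure of $M$ (you pull back $\mathrm{Ass}_S(M)$ along $\varphi$, the paper pulls back a primary decomposition), then apply prime avoidance, localize at the resulting prime, and conclude via Nakayama together with the compatibility of $\Hom$ with localization for finitely generated modules over a Noetherian ring. The only cosmetic difference is that you produce a nonzero map $N\to R/P$ globally and compose with $R/P\inc M$, whereas the paper exhibits the nonzero composite $N_{\frakp}\surj k(\frakp)\inc M_{\frakp}$ directly and deduces $\Hom_R(N,M)_{\frakp}\not=0$.
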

\begin{rem} \label{rem:assMFinite}
Before starting on the proof, let us remark that in this case, the set of associated primes of $M$ \emph{over $R$} is finite, even though $M$ may not be
finitely generated; see \cite[Exercise 6.9]{MatsumuraComm}.  Indeed, one sees this by taking a primary decomposition of
$M$ over $S$, which is also a primary decomposition over $R$, since the $R$-annihilator of an $S$-module is the preimage of the annihilator in $S$ (and the inverse
image of a primary ideal is primary).
\end{rem}

\begin{proof}[Proof of Lemma \ref{26}]
Suppose that $x \in \Ann_R(N)$ is a non-zero divisor. Then for any $\psi \in \Hom_R(N,M)$, we have
\[
x\psi(n)=\psi(xn)=\psi(0)=0, \forall n\in N.
\]
Since $x$ is a non-zero divisor on $M$, we have $\psi(n)=0$. 

Now assume that $\Ann_R(N)$ consists of zero-divisors on $M$.  As mentioned in Remark \ref{rem:assMFinite},
the set of associated primes of $M$ over $R$ is finite.  Since $\Ann_R(N)$ consists of zerodivisors,
it is contained in the (finite) union of all associated primes of $M$.  Therefore, we can apply the Prime Avoidance Lemma to get $\Ann_R(N) \subset \frakp$
for some associated prime $\frakp$ of $M$ over $R$.  We then have the following non-trivial map:
\[
N_{\frakp} \surj N_{\frakp}/\frakp N_{\frakp} \surj k(\frakp) \inc M_{\frakp},
\]
where $k(\frakp)$ denotes the residue field $R_\frakp/\frakp R_\frakp$.
Thus, since $N$ is finitely generated over $R$, $\Hom_{R}(N,M)_{\frakp}=\Hom_{R_{\frakp}}(N_{\frakp}, M_{\frakp})\not=0$, which gives $\Hom_R(N,M)\not=0$.
\end{proof}
We also record without proof a basic fact from the homological algebra of commutative rings.
\begin{lem}\label{27}
Let $\varphi:R\to S$ be a homomorphism of Noetherian rings, $M$ a finitely generated $S$-module, and $N$ a finitely generated $R$-module. If $(x_1,\cdots, x_r)$ is a regular sequence in $\Ann_R(N)$ for $M$, then 
\begin{equation*}
\Hom_R(N, M/(x_1,\cdots,x_r)M) = \Ext^r_R(N,M).
\xqedhere{5cm}{\qed}
\end{equation*}
\end{lem}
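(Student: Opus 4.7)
The plan is to proceed by induction on $r$, proving simultaneously the two statements: $(\alpha)$ $\Ext^i_R(N,M) = 0$ for every $i < r$, and $(\beta)$ $\Ext^r_R(N,M) \cong \Hom_R(N, M/(x_1,\ldots,x_r)M)$. The base case $r = 0$ will be a tautology, and the case $r = 1$ will pull in Lemma \ref{26} to supply the initial vanishing.

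The main tool for the inductive step will be the short exact sequence $0 \to M \xrightarrow{x_1} M \to M/x_1M \to 0$ together with its long exact sequence in $\Ext^*_R(N,-)$. The crucial observation is that since $x_1 \in \Ann_R(N)$, multiplication by $x_1$ acts as zero on every $\Ext^i_R(N, -)$; this causes the long exact sequence to break into short exact sequences
\[
0 \to \Ext^i_R(N, M) \to \Ext^i_R(N, M/x_1M) \to \Ext^{i+1}_R(N, M) \to 0
\]
for each $i \geq 0$. Applying the inductive hypothesis to the $(M/x_1M)$-regular sequence $(x_2, \ldots, x_r) \subset \Ann_R(N)$ of length $r-1$ will then give $\Ext^i_R(N, M/x_1M) = 0$ for $i < r-1$ and $\Ext^{r-1}_R(N, M/x_1M) \cong \Hom_R(N, M/(x_1,\ldots,x_r)M)$.

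Substituting these into the displayed short exact sequences immediately yields $\Ext^i_R(N, M) = 0$ for every $i$ with $1 \leq i \leq r-1$, and the complementary vanishing $\Ext^0_R(N, M) = \Hom_R(N, M) = 0$ comes from Lemma \ref{26} applied to the non-zero divisor $x_1 \in \Ann_R(N)$ on $M$; together these give $(\alpha)$. Finally, the short exact sequence in degree $i = r-1$, combined with the just-established $\Ext^{r-1}_R(N, M) = 0$, identifies $\Ext^r_R(N, M)$ with $\Ext^{r-1}_R(N, M/x_1M)$, which by the inductive hypothesis is $\Hom_R(N, M/(x_1,\ldots,x_r)M)$. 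This establishes $(\beta)$ and closes the induction. The main bookkeeping obstacle is to make sure the vanishing range and the top-degree isomorphism propagate together through the induction in lockstep; everything else is formal once one observes that $x_1$ annihilating $N$ is exactly what collapses the long exact sequence into the short exact pieces above, and no finite generation hypotheses beyond those in Lemma \ref{26} are required, so the result is compatible with the setting of Proposition \ref{p24}.
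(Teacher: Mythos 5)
Your argument is correct. The paper itself records Lemma \ref{27} without proof, as a standard fact (it is Rees's lemma, cf.\ \cite[Lemma 1.2.4]{BHcomm}), and your induction on $r$ is exactly the standard argument: since $x_1\in\Ann_R(N)$ kills every $\Ext^i_R(N,-)$, the long exact sequence of $0\to M\xrightarrow{x_1}M\to M/x_1M\to 0$ splits into the short exact sequences you display, and the simultaneous induction on the vanishing range and the top-degree isomorphism goes through as you describe. Two small remarks: the vanishing $\Hom_R(N,M)=0$ can also be read off directly from the start of that long exact sequence (multiplication by $x_1$ is both injective and zero there), so only the trivial direction of Lemma \ref{26} is ever used; and your observation that no finiteness hypotheses are needed for this lemma is accurate --- the Noetherian/finitely generated assumptions in the paper's statement are inherited from the setting of Proposition \ref{p24}, where Lemma \ref{26} genuinely needs them.
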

\begin{proof}[Proof of Proposition \ref{p24}] Let $(x_1,\cdots, x_r)$ be a maximal $M$-regular sequence in $I$. By Lemma \ref{27},
\[
\Ext^i_R(R/I, M) \cong \Hom_R(R/I, M/(x_1,\cdots, x_i)M).
\]
If $i < r$, then $x_{i+1}$ is a non-zero-divisor in $M/(x_1,\cdots, x_i)M$, therefore by Lemma \ref{26}, $\Ext^i_R(R/I,
M)=0$. Since $(x_1,\cdots, x_r)$ is maximal, $I$ doesn't contain any non-zero-divisor in $M/(x_1,\cdots, x_r)M$. Thus
$\Ext^r_R(R/I, M)\not=0$. This proves the claim.
\end{proof}
One has the following well known characterization of ($k$4) in terms of $M$-regular sequences due to Serre \cite[Chapter IV.A]{SerreLocal}.
\begin{prop}
Let $u_1,\dots,u_n$ be a homogeneous minimal generating set of $R_+$.  Suppose that $R$ and $M$ satisfy the hypotheses
of Proposition \ref{p23}.  Then the following properties are equivalent:

\begin{enumerate}[a)]
\item $H_p(\mathbf{u},M) = 0$ for $p \geq 1$.
\item $H_1(\mathbf{u},M) = 0$.
\item The sequence $u_1,\dots,u_n$ is $M$-regular.
\end{enumerate}
Here, $H_p(\mathbf{u},M)$ denotes the $p^\text{th}$ Koszul homology of the sequence $u_1,\dots,u_n$ with coefficients in $M$.
\end{prop}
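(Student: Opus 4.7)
The plan is to establish the cycle (a) $\Rightarrow$ (b) $\Rightarrow$ (c) $\Rightarrow$ (a). The first implication is trivial. For (c) $\Rightarrow$ (a), I would invoke the standard fact that when $\mathbf{u}=(u_1,\dots,u_n)$ is an $M$-regular sequence, the Koszul complex $K_\bullet(\mathbf{u};M)$ is acyclic in positive degree. This is proved by induction on $n$: the case $n=1$ is immediate from the definition, and the inductive step uses the mapping-cone description of $K_\bullet(\mathbf{u};M)$ over $K_\bullet((u_1,\dots,u_{n-1});M)$ together with the regularity of $u_n$ on the appropriate quotient.

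The substance of the proposition lies in (b) $\Rightarrow$ (c), which I would prove by induction on $n$. The base case $n=1$ gives $H_1(u_1;M) = \Ann_M(u_1)$, so the statement is immediate (with $M/u_1 M \neq 0$ following from graded Nakayama since $M$ is bounded below). For the inductive step, set $\mathbf{u}' = (u_1,\dots,u_{n-1})$ and use the standard short exact sequence of Koszul complexes
\[
0 \to K_\bullet(\mathbf{u}';M) \to K_\bullet(\mathbf{u};M) \to K_\bullet(\mathbf{u}';M)[-1] \to 0,
\]
whose long exact sequence has connecting maps equal (up to sign) to multiplication by $u_n$. Vanishing of $H_1(\mathbf{u};M)$ yields two consequences: multiplication by $u_n$ is surjective on $H_1(\mathbf{u}';M)$, and injective on $H_0(\mathbf{u}';M) = M/\mathbf{u}'M$.

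The crucial technical input here is graded Nakayama. Because $M$ is finitely generated over the non-negatively graded ring $S=k[x_1,\dots,x_m]$ with $S_0=k$, the module $M$ is bounded below as a graded abelian group, and so is every subquotient---including $H_1(\mathbf{u}';M)$. Since $u_n$ has positive degree, multiplication by $u_n$ cannot be surjective on any nonzero bounded-below graded module (an element in lowest nonzero degree cannot lie in the image), so $H_1(\mathbf{u}';M)=0$. The inductive hypothesis then gives that $\mathbf{u}'$ is $M$-regular; combined with the injectivity of $u_n$ on $M/\mathbf{u}'M$ and the non-vanishing $M/\mathbf{u}M\neq 0$ (again from graded Nakayama applied to $M$), this proves $\mathbf{u}$ is $M$-regular.

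The main subtlety---and the reason one cannot merely cite a standard reference---is that $M$ is not assumed finitely generated over $R$, so the usual local Nakayama argument in Serre's original treatment does not apply directly. What rescues the proof is the graded structure on $M$ inherited through $\varphi:R\to S$ from finite $S$-generation: this provides the boundedness below needed for graded Nakayama to substitute for its local counterpart throughout the induction.
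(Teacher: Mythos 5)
Your proof is correct and follows essentially the same route as the paper: the paper simply observes that Serre's original Koszul-homology induction goes through once the local Nakayama lemma is replaced by graded Nakayama (valid because $M$ is finitely generated over $S$ and hence bounded below), and your write-up is precisely that argument carried out in detail, with the key substitution identified in the same place.
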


\begin{proof}
The proof that appears in \emph{op. cit.} uses standard techniques of the Koszul complex which hold even for modules which are not
finitely generated over $R$, together with Nakayama's lemma for finitely generated modules over a Noetherian local ring.  Since a version
of Nakayama's lemma holds for graded modules that satisfy our hypothesis, Serre's original argument remains valid.
\end{proof}

\begin{cor}\label{p22}
Let $u_1,\dots, u_n $ be a homogeneous minimal generating set of $R_+$, and suppose that $R$ and $M$ satisfy the hypothesis of Proposition \ref{p24}.
Then $\Tor_1^R(M,k) = 0$ if and only if $(u_1,\cdots, u_n)$ is a regular sequence for $M$.
\end{cor}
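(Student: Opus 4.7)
The plan is to reduce the corollary to the preceding proposition by identifying $\Tor_1^R(M,k)$ with the first Koszul homology $H_1(\mathbf{u},M)$. Since $R = k[u_1,\dots,u_n]$ is a polynomial ring and $u_1,\dots,u_n$ is a minimal homogeneous generating set of $R_+$, the Koszul complex $K_\bullet(\mathbf{u})$ on $u_1,\dots,u_n$ is a finite free resolution of $k = R/R_+$ as an $R$-module. This is the standard fact that for a polynomial ring over a field (or more generally, for a regular sequence in any ring), the Koszul complex on the variables resolves the residue field.

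With this resolution in hand, I would compute
\[
\Tor_p^R(M,k) \;=\; H_p\bigl(M \otimes_R K_\bullet(\mathbf{u})\bigr) \;=\; H_p(\mathbf{u},M)
\]
for every $p \geq 0$. In particular, $\Tor_1^R(M,k) = H_1(\mathbf{u},M)$.

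The conclusion is then immediate from the preceding proposition: the equivalence of $H_1(\mathbf{u},M)=0$ and the $M$-regularity of $u_1,\dots,u_n$ (conditions (b) and (c) there) translates directly into the statement that $\Tor_1^R(M,k)=0$ if and only if $u_1,\dots,u_n$ is an $M$-regular sequence. No obstacle of substance arises: the only subtlety worth flagging is that $M$ need not be finitely generated over $R$, so one must invoke the generalization discussed above (Proposition \ref{p23} and the version of the preceding proposition proved without finite generation over $R$), but this has already been set up in the section and is available for use.
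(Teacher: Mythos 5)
Your proposal is correct and is essentially the paper's own argument: the Koszul complex on $u_1,\dots,u_n$ resolves $k$ over $R$, so $\Tor_1^R(M,k)\cong H_1(\mathbf{u},M)$, and the equivalence of conditions (b) and (c) in the preceding proposition finishes the proof. Your remark about $M$ not being finitely generated over $R$ is appropriately handled by the setup already established in the section, just as in the paper.
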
 
\begin{proof}
The Koszul complex on $u_1,\dots,u_n$ resolves $k$ over $R$, and hence one can use its homology to compute the $\Tor$ modules.  Now appeal to the previous proposition.
\end{proof}
Propositions \ref{p22} and \ref{p23} show that in the setup of \ref{p23}, big Cohen-Macaulayness of $M$ has the usual meaning in terms of the
maximal length of an $M$-regular sequence.
\begin{cor} \label{cor:depthKoszul}
In the setup of Proposition \ref{p23}, one has that $\Tor_1^R(M,k)=0$ if and only if $\depth_R(M) = n$.
\end{cor}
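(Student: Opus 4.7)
The strategy is to translate the $\Ext$-based characterization of $\depth_R(M)$ from Proposition \ref{p24} into a $\Tor$-vanishing statement via the Koszul complex, and then invoke Corollary \ref{p22}.

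Since $(u_1,\dots,u_n)$ is a regular sequence on $R=k[u_1,\dots,u_n]$, the Koszul complex $K_\bullet=K(u_1,\dots,u_n)$ is a finite free resolution of $k=R/R_+$. Hence $\Tor_i^R(M,k)\cong H_i(\mathbf{u};M)$. The Koszul self-duality, coming from the canonical isomorphism $\Hom_R(K_i,R)\cong K_{n-i}$ induced by the top exterior product pairing $\Lambda^iR^n\otimes\Lambda^{n-i}R^n\to\Lambda^nR^n\cong R$, gives, after using that each $K_i$ is finitely generated free, the identification
\[
\Ext_R^i(k,M)\;\cong\;H^i\bigl(\Hom_R(K_\bullet,M)\bigr)\;\cong\;H_{n-i}(\mathbf{u};M)\;=\;\Tor_{n-i}^R(M,k).
\]
Combining this with Proposition \ref{p24}, which says $\depth_R(M)=\min\{i:\Ext_R^i(k,M)\neq 0\}$, yields, whenever $M\neq R_+M$, the key dictionary
\[
\depth_R(M)\;=\;n\;-\;\max\{\,j\geq 0:\Tor_j^R(M,k)\neq 0\,\}.
\]

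The corollary follows in both directions. If $\depth_R(M)=n$, the displayed formula forces $\Tor_j^R(M,k)=0$ for all $j\geq 1$, and in particular $\Tor_1^R(M,k)=0$. Conversely, if $\Tor_1^R(M,k)=0$, Corollary \ref{p22} gives that $(u_1,\dots,u_n)$ is $M$-regular, and this sequence is maximal in $R_+$: every element of $R_+$ acts by zero on $M/(u_1,\dots,u_n)M=M\otimes_Rk$, which is non-zero by graded Nakayama (applicable since $M$ is finitely generated over $S$, hence bounded below as a graded $R$-module) as soon as $M\neq 0$. Proposition \ref{p23} then yields $\depth_R(M)=n$. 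The edge case $M=0$ is subsumed by the convention $\depth_R(M)=\infty$ when $M=R_+M$.

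The one non-trivial ingredient is the Koszul self-duality identification $\Ext_R^i(k,M)\cong\Tor_{n-i}^R(M,k)$. This is standard for the Koszul complex on a regular sequence in a Noetherian ring, relying only on the fact that each $K_i$ is finitely generated and free; importantly, it imposes no finite-generation hypothesis on $M$ over $R$, which is essential here since $M$ is only assumed finitely generated over the larger ring $S$.
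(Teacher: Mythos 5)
Your proof is correct. The paper itself gives no separate argument for this corollary: it is presented as an immediate combination of Corollary \ref{p22} (vanishing of $\Tor_1^R(M,k)$ is equivalent to $u_1,\dots,u_n$ being $M$-regular) and Proposition \ref{p23} (all maximal $M$-regular sequences in $R_+$ have length $\depth_R(M)$), which is exactly how you handle the direction $\Tor_1^R(M,k)=0 \Rightarrow \depth_R(M)=n$, including the correct observation that $(u_1,\dots,u_n)$ is automatically maximal because $R_+$ annihilates $M\otimes_R k\neq 0$ (graded Nakayama, using that $M$ is bounded below). Where you genuinely diverge is the converse: rather than appealing to depth-sensitivity or rigidity of Koszul homology for the specific sequence $u_1,\dots,u_n$ (which is what the paper's terse ``combine \ref{p22} and \ref{p23}'' tacitly requires, and which is usually stated only for finitely generated modules), you use the Koszul self-duality $\Hom_R(K_i,R)\cong K_{n-i}$ to get $\Ext_R^i(k,M)\cong\Tor_{n-i}^R(M,k)$ for an arbitrary $R$-module $M$, and then read off $\depth_R(M)=n-\max\{j:\Tor_j^R(M,k)\neq 0\}$ directly from the paper's $\Ext$-definition of depth. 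This buys a cleaner and fully general argument for that direction, at the cost of one extra (standard) homological identification; the paper's route stays entirely inside the regular-sequence language of Section \ref{ba}. One small caveat: as you note, the degenerate case $M=R_+M$ (e.g.\ $M=0$) must be excluded, since there $\Tor_1^R(M,k)=0$ while $\depth_R(M)=\infty$; this exclusion is implicit in the paper's depth convention, so it is a matter of phrasing rather than a gap.
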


\section{{\bf Properties of $\ZZ[K]$ as an algebra over $\ZZ[\sfR^*]$}}\label{mainsection}
In this section, we start with the characterization of the big Cohen-Macaulayness ($k$4), and then discuss the freeness
($k$1) and the torsion-freeness ($k$3), for $\ZZ[K]$ as a ring over $\ZZ[\sfR^*]$. In the rest of the paper, we use
Notation \ref{notation} unless otherwise specified.
\subsection{Big Cohen-Macaulayness}
The following theorem is a variant of Theorem 1.1 \cite{FranzPuppe07} and Lemma 5.1 in \cite{Franz09}. The differences from them are that the $\sfT$-CW complex $E\sfG\times_{\sfG} \calZ_K$ is not finite and that we consider the cohomology of the quotient stack $[\calZ_K/\sfG]$ instead of the one of the underlying topological space $\calZ_K/\sfG$. 
\begin{thm}\label{main} Let $\iota_{\sfR}: E\sfT\times_{\sfG}\calZ_K \to E\sfT\times_{\sfT} \calZ_K$ be the quotient map by the $\sfR$-action. The following are equivalent:
\begin{itemize}\label{torsur}
\item[(1)] $\ZZ[K]$ is big Cohen-Macaulay over $\ZZ[\sfR^*]$, i.e. $\Tor^{\ZZ[\sfR^*]}_1(\ZZ[K],\ZZ)=0$.
\item[(2)] $\iota^*_{\sfR}:H_{\sfT}^*(\calZ_K;\ZZ) \to H_{\sfG}^*(\calZ_K;\ZZ)$ is surjective.
\item[(3)] $H_{\sfG}^{odd}(\calZ_K;\ZZ)=0$.
\end{itemize}
\end{thm}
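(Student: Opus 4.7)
The plan is to apply Theorem \ref{Franz} to transport the three statements to the algebraic side, and then to exploit the interaction between the grading on $\Tor^*_{\ZZ[\sfR^*]}(\ZZ[K],\ZZ)$ and the Koszul-theoretic characterization of regular sequences recorded in Section \ref{ba}. Once this is set up, each implication drops out with essentially no extra work.

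First, I would use Theorem \ref{Franz} to identify $H^*_{\sfG}(\calZ_K;\ZZ)$ with $\Tor^*_{\ZZ[\sfR^*]}(\ZZ[K],\ZZ)$ as graded $\ZZ[\sfT^*]$-modules. The commutative square appearing near the end of the proof of that theorem further tells me that under $\Theta_{\sfR}$ the map $\iota^*_{\sfR}$ corresponds to the canonical algebraic map $\phi_{\sfR}\colon \ZZ[K]\to \Tor^*_{\ZZ[\sfR^*]}(\ZZ[K],\ZZ)$, whose image lies in $\Tor_0^{\ZZ[\sfR^*]}(\ZZ[K],\ZZ)\cong \ZZ[K]/(u_1,\dots,u_n)\ZZ[K]$ and is the quotient map onto this summand. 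Next, I would read off from Definition \ref{cohdegree} that $\Tor_p^{\ZZ[\sfR^*]}(\ZZ[K],\ZZ)$ is concentrated in total cohomological degrees of parity $p \pmod 2$, since $\ZZ[K]$ is concentrated in even internal degrees. In particular $\Tor_0$ sits entirely in even degrees and $\Tor_1$ entirely in odd degrees.

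With these two ingredients in hand, the equivalences follow. The implication (3)$\Rightarrow$(1) is immediate: $\Tor_1^{\ZZ[\sfR^*]}(\ZZ[K],\ZZ)$ is a direct summand of $H^{odd}_{\sfG}(\calZ_K;\ZZ)$, which is zero by hypothesis. For (1)$\Rightarrow$(2), observe that $(u_1,\dots,u_n)$ is a regular sequence in $\ZZ[\sfR^*]$, so the Koszul complex on these generators resolves $\ZZ$, whence $\Tor^{\ZZ[\sfR^*]}_*(\ZZ[K],\ZZ)\cong H_*(\mathbf{u};\ZZ[K])$; the Serre-type proposition immediately preceding Corollary \ref{p22} then upgrades $H_1(\mathbf{u};\ZZ[K])=0$ to $H_p(\mathbf{u};\ZZ[K])=0$ for all $p\geq 1$, so that $H^*_{\sfG}(\calZ_K;\ZZ)=\Tor_0$, and $\phi_{\sfR}$ is the quotient map $\ZZ[K]\surj \ZZ[K]/(u_1,\dots,u_n)\ZZ[K]$, hence surjective. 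For (2)$\Rightarrow$(3), if $\phi_{\sfR}$ is surjective onto $\Tor^*_{\ZZ[\sfR^*]}(\ZZ[K],\ZZ)$ then, since its image is already contained in $\Tor_0$, we must have $\Tor_p^{\ZZ[\sfR^*]}(\ZZ[K],\ZZ)=0$ for all $p\geq 1$; by the parity observation the odd-degree part of $H^*_{\sfG}(\calZ_K;\ZZ)$ then vanishes.

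There is no serious obstacle beyond assembling these pieces; the content of the theorem is really the content of Theorem \ref{Franz} together with the Koszul-theoretic criterion that $(u_1,\dots,u_n)$ is $\ZZ[K]$-regular iff only the first Koszul homology vanishes. The only point needing a moment's care is the identification of $\Theta_{\sfR}\circ\iota^*_{\sfR}$ with $\phi_{\sfR}$ on $H^*_{\sfT}(\calZ_K;\ZZ)\cong\ZZ[K]$, but this is exactly the commutativity of the second diagram in the proof of Theorem \ref{Franz}, and once this is invoked the rest is bookkeeping with the parity of the $\Tor$-grading.
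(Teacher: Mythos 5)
Your proposal is correct, and for most of the cycle it follows the paper's route: you identify $H^*_{\sfG}(\calZ_K;\ZZ)$ with $\Tor^*_{\ZZ[\sfR^*]}(\ZZ[K],\ZZ)$ via Theorem \ref{Franz}, use the commutative square in its proof to match $\iota^*_{\sfR}$ with $\phi_{\sfR}$, observe that the image of $\phi_{\sfR}$ is exactly the summand $\Tor_0=\ZZ[K]/\lan u_1,\dots,u_n\ran$, so that (2) is equivalent to the vanishing of $\Tor_{i>0}$, and then get the equivalence with (1) from the Koszul rigidity statement (the paper cites Proposition 2.3 of Franz--Puppe for this, whereas you invoke the in-paper Serre proposition and Corollary \ref{p22}; same content, and your version is more self-contained, though you should note that the hypotheses of Proposition \ref{p23} hold because $\ZZ[K]$ is a cyclic $\ZZ[x_1,\dots,x_m]$-module). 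The genuine difference is how you come back from (3): the paper proves (3)$\Rightarrow$(2) topologically, by showing that (3) forces the Serre spectral sequence of the Borel construction for the residual $\sfR$-action on $E\sfT\times_{\sfG}\calZ_K$ to degenerate at $E_2$, so the restriction to the fiber (identified with $\iota^*_{\sfR}$) is surjective. You instead prove (3)$\Rightarrow$(1) purely algebraically, using the parity built into Definition \ref{cohdegree}: since $\ZZ[K]$ is evenly graded, $\Tor_p$ sits in total degrees of parity $p$, so $\Tor_1$ is a summand of $H^{odd}_{\sfG}(\calZ_K;\ZZ)$ and vanishes under (3). This is a legitimate and arguably cleaner closure of the cycle; what it buys is the elimination of the spectral-sequence step at the price of leaning more heavily on Theorem \ref{Franz} being an isomorphism of \emph{graded} modules with the grading of Definition \ref{cohdegree}, whereas the paper's spectral-sequence argument only needs the module identification and works verbatim in the generality of the remark following the theorem (any formal $\sfT$-space with evenly graded equivariant cohomology) --- a generality your parity argument in fact also retains, since it only uses that the $\ZZ[\sfR^*]$-module in question is concentrated in even internal degrees.
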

\begin{proof}
Since $\Tor^{\ZZ[\sfR^*]}_{0}(\ZZ[K],\ZZ) =\ZZ[K]/\lan u_1,\cdots, u_n\ran$, Theorem \ref{Franz} implies that (2) is
equivalent to the vanishing of $\Tor^{\ZZ[\sfR^*]}_{i>0}(\ZZ[K],\ZZ)$, which is actually equivalent to (1) by
Proposition 2.3 \cite{FranzPuppe07}.  (2) implies (3) because $H_{\sfT}^*(\calZ_K;\ZZ)$ has only even degree
classes. Now (3) implies that the Serre spectral sequence for the Borel construction for the residual $\sfR$-action for
$E\sfT\times_{\sfG}\calZ_K$ degenerates at $E_2$ level and hence the pullback of the fiber inclusion
$E\sfT\times_{\sfG}\calZ_K \inc E\sfR\times_{\sfR} (E\sfT\times_{\sfG}\calZ_K)$ is surjective. This pullback can be
identified as $\iota^*$ and thus (3) implies (2) (See also Lemma 5.1 \cite{Franz09} and its proof).
\end{proof}
Again it is worth noting that Theorem \ref{main} for any $\sfT$-space $X$ that satisfies the formality (see Remark
\ref{formality}) and such that $H_{\sfT}^{odd}(X;\ZZ)$.
\begin{rem}\label{surj=quot}
By Theorem \ref{Franz}, $\iota^*_{\sfR}$ is surjective if and only if $\Theta: H_{\sfG}^*(\calZ_K,\ZZ) \cong \ZZ[K]/\lan u_1,\cdots, u_n\ran$.
\end{rem}
\begin{rem}\label{mainoverQ}
Theorem \ref{main} holds after replacing $\ZZ$ by $\QQ$. 
\end{rem}
\begin{cor}\label{cormain2}
If $\calX=[\calZ_K/\sfG]$ is a quasi-toric orbifold or effective toric Deligne-Mumford stack, then the following are equivalent
\begin{itemize}
\item[(1)] $\Tor^{\ZZ[\sfR^*]}_1(\ZZ[K],\ZZ)=0$.
\item[(2)] $H^*(\calX;\ZZ)$ is the quotient of $\ZZ[K]$ by $u_1,\cdots, u_n$.
\item[(3)] $H^{odd}(\calX;\ZZ)=0$.
\end{itemize}
\end{cor}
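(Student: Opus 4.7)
The plan is to reduce the statement directly to Theorem \ref{main} via the identification of $H^*(\calX;\ZZ)$ with $H^*_{\sfG}(\calZ_K;\ZZ)$ as graded $\ZZ[\sfT^*]$-modules, and then to package condition (2) using Remark \ref{surj=quot}. Everything needed is already in place; the proof should be essentially a bookkeeping argument.

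First, I would recall from Remark \ref{stackcohomology} that the integral cohomology of a global quotient stack $[M/\sfL]$ is, by Edidin's definition, nothing other than the $\sfL$-equivariant cohomology $H^*_{\sfL}(M;\ZZ)$. In the quasi-toric orbifold setting of Poddar--Sarkar (Section \ref{PoddarSarkar}), $\calX$ is literally presented as $[\calZ_{K_\Delta}/\sfG]$, so the identification $H^*(\calX;\ZZ)\cong H^*_{\sfG}(\calZ_K;\ZZ)$ is immediate. In the toric Deligne--Mumford setting of Borisov--Chen--Smith (Section \ref{XK}), $\calX=[X_{\Sigma_K}/\sfG_{\CC}]$, and the $\sfT$-equivariant deformation retract of Proposition \ref{propret} yields $H^*_{\sfG_{\CC}}(X_{\Sigma_K};\ZZ)\cong H^*_{\sfG}(\calZ_K;\ZZ)$; combined with the stack-cohomology interpretation of Remark \ref{stackcohomology}, this again gives $H^*(\calX;\ZZ)\cong H^*_{\sfG}(\calZ_K;\ZZ)$ as graded $\ZZ[\sfT^*]$-modules.

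With this identification in hand, conditions (1) and (3) of the corollary match conditions (1) and (3) of Theorem \ref{main} verbatim, so their equivalence is immediate. The remaining task is to reconcile condition (2). By Theorem \ref{main}, condition (1) is equivalent to the surjectivity of $\iota^*_{\sfR}:H^*_{\sfT}(\calZ_K;\ZZ)\to H^*_{\sfG}(\calZ_K;\ZZ)$; since $H^*_{\sfT}(\calZ_K;\ZZ)\cong \ZZ[K]$ by (\ref{Z[T]}) and the classes $u_1,\ldots,u_n$ pull back from $H^*(B\sfR)$ and therefore lie in the kernel of $\iota^*_{\sfR}$, Remark \ref{surj=quot} rephrases surjectivity as the statement $H^*_{\sfG}(\calZ_K;\ZZ)\cong \ZZ[K]/\langle u_1,\ldots,u_n\rangle$. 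Transporting across the stack identification, this is exactly condition (2) of the corollary.

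I do not expect any serious obstacle here, as the corollary is essentially a repackaging of Theorem \ref{main} combined with the stack-cohomology identifications of Section \ref{bg}. The only mild subtlety is being consistent about which group one quotients by in the DM stack case ($\sfG_{\CC}$ versus its maximal compact $\sfG$); but Proposition \ref{propret} bridges this gap cleanly, so the overall argument is a two- or three-line deduction once Theorem \ref{main} and Remark \ref{surj=quot} are invoked.
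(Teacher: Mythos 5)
Your proposal is correct and follows the same route as the paper, which treats Corollary \ref{cormain2} as an immediate consequence of Theorem \ref{main} once $H^*(\calX;\ZZ)$ is identified with $H^*_{\sfG}(\calZ_K;\ZZ)$ via Remark \ref{stackcohomology} and Proposition \ref{propret}, with condition (2) translated by Remark \ref{surj=quot}. No gaps.
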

\subsection{Freeness}
The following theorem is analogous to Lemma 6.1 \cite{Franz09}.
\begin{prop}\label{free1} $\ZZ[K]$ is free over $\ZZ[\sfR^*]$ if and only if $H_{\sfG}^*(\calZ_K;\ZZ)$ has no $\ZZ$-torsion and has only even degree.
\end{prop}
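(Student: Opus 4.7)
The plan is to use the identification $H_{\sfG}^*(\calZ_K;\ZZ) \cong \Tor_{\ZZ[\sfR^*]}^*(\ZZ[K],\ZZ)$ from Theorem \ref{Franz} and exploit a graded lifting argument combined with graded Nakayama.

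For the easy direction, assume $\ZZ[K] \cong \bigoplus_{\alpha} \ZZ[\sfR^*](-d_\alpha)$ as a graded $\ZZ[\sfR^*]$-module. Then $\Tor_{\ZZ[\sfR^*]}^i(\ZZ[K],\ZZ)=0$ for all $i>0$, so Theorem \ref{Franz} gives $H_{\sfG}^*(\calZ_K;\ZZ) \cong \ZZ[K]/R_+\ZZ[K] \cong \bigoplus_\alpha \ZZ(-d_\alpha)$, which is $\ZZ$-torsion-free and concentrated in even degrees (all $d_\alpha$ are even since $\ZZ[K]$ lives in even degrees and the $u_i$ have even degree).

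For the reverse direction, assume $H_{\sfG}^*(\calZ_K;\ZZ)$ is $\ZZ$-torsion-free and vanishes in odd degrees. Theorem \ref{main} then yields $\Tor_1^{\ZZ[\sfR^*]}(\ZZ[K],\ZZ)=0$ together with an isomorphism $H_{\sfG}^*(\calZ_K;\ZZ) \cong \ZZ[K]/R_+\ZZ[K]$, so the torsion-freeness hypothesis translates into $\ZZ[K]/R_+\ZZ[K]$ being a free $\ZZ$-module. Choose a homogeneous $\ZZ$-basis $\{\bar e_\alpha\}_{\alpha}$ of degrees $d_\alpha$, pick homogeneous lifts $e_\alpha \in \ZZ[K]$, and define $\phi: F := \bigoplus_\alpha \ZZ[\sfR^*](-d_\alpha) \to \ZZ[K]$ by sending each free generator to the corresponding $e_\alpha$. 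To conclude that $\phi$ is an isomorphism, I would apply graded Nakayama twice. The cokernel $C$ of $\phi$ is bounded below and satisfies $C=R_+C$ by construction, hence vanishes. For the kernel $N$, the long exact Tor sequence associated to $0 \to N \to F \to \ZZ[K]\to 0$, combined with $\Tor_1^{\ZZ[\sfR^*]}(F,\ZZ)=0$ and the fact that $F/R_+F \to \ZZ[K]/R_+\ZZ[K]$ is an isomorphism by construction, gives $N/R_+N \cong \Tor_1^{\ZZ[\sfR^*]}(\ZZ[K],\ZZ)=0$. A second application of graded Nakayama to $N$ (which is bounded below as a submodule of $F$) forces $N=0$, so $\phi$ is an isomorphism and $\ZZ[K]$ is free.

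The main subtlety I anticipate is that $\ZZ[K]$ need not be finitely generated over $\ZZ[\sfR^*]$, so the basis $\{\bar e_\alpha\}$ and the free module $F$ may have infinite rank. This is harmless: the graded Nakayama lemma in the form ``\emph{a graded module $M$ that is bounded below and satisfies $M=R_+M$ is zero}'' remains valid because the $u_i$ have strictly positive degree, so the lowest-degree piece of $M$ cannot lie in $R_+M$. Note also that only the vanishing of $\Tor_1$ is used directly; the vanishing of higher $\Tor_i$ comes for free once freeness is established, consistent with Theorem \ref{main} which produces an $\ZZ[K]$-regular sequence $u_1,\dots,u_n$ whenever $\Tor_1$ vanishes.
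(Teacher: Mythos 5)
Your proof is correct, but the key direction is argued by a genuinely different route than the paper's. For ($\ZZ$-torsion-free and even) $\Rightarrow$ free, the paper stays topological: surjectivity of $\iota_{\sfR}^*$ (from Theorem \ref{main}) gives finite rank in each degree, hence each $H^r_{\sfG}(\calZ_K;\ZZ)$ is a finitely generated free $\ZZ$-module, and then the Leray--Hirsch theorem applied to the bundle $E\sfR\times_{\sfR}(E\sfT\times_{\sfG}\calZ_K)\to B\sfR$ yields $\ZZ[\sfR^*]\otimes_{\ZZ}H^*_{\sfG}(\calZ_K;\ZZ)\cong\ZZ[K]$, which is the desired freeness. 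You instead run a purely algebraic minimal-presentation argument: lift a homogeneous $\ZZ$-basis of $\ZZ[K]/\lan u_1,\cdots,u_n\ran$, and use graded Nakayama twice, together with the long exact sequence of $\Tor$ and the vanishing $\Tor_1^{\ZZ[\sfR^*]}(\ZZ[K],\ZZ)=0$ supplied by Theorem \ref{main}, to kill first the cokernel and then the kernel of $\phi$; your observation that graded Nakayama only needs the module to be bounded below (not finitely generated) is exactly the right justification. Both routes rest on Theorems \ref{Franz} and \ref{main}; yours avoids Leray--Hirsch entirely and works verbatim for any bounded-below graded module in place of $\ZZ[K]$, while the paper's argument produces the isomorphism $\ZZ[\sfR^*]\otimes_{\ZZ}H^*_{\sfG}(\calZ_K;\ZZ)\cong\ZZ[K]$ directly from the topology of the Borel fibration. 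One small point you should make explicit: ``$\ZZ$-torsion-free $\Rightarrow$ free'' fails for general abelian groups, so the existence of your homogeneous $\ZZ$-basis needs the remark that each graded piece of $\ZZ[K]/\lan u_1,\cdots,u_n\ran$ is a finitely generated abelian group (being a quotient of a graded piece of $\ZZ[K]$), hence free once torsion-free; this is the same degreewise finiteness observation the paper makes for $H^r_{\sfG}(\calZ_K;\ZZ)$ before invoking Leray--Hirsch. The easy direction of your argument coincides with the paper's.
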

\begin{proof}
If $H_{\sfG}(\calZ_K,\ZZ)$ has only even degree, then $\iota^*_{\sfR}: H_{\sfT}^*(\calZ_K,\ZZ) \to
H_{\sfG}^*(\calZ_K,\ZZ)$ is surjective by Theorem \ref{torsur}. The surjectivity implies that $H^r_{\sfG}(\calZ_K,\ZZ)$
has finite rank for each $r$ and is actually a finitely generated free $\ZZ$-module if it has no $\ZZ$-torsion. The
Leray-Hirsch Theorem (c.f.\ Theorem 4D.1 \cite{HatcherAT}) can be applied to the fiber bundle
$E\sfR\times_{\sfR}(E\sfT\times_{\sfG} \calZ_K) \to B\sfR$ where the pullback along the fiber $E\sfT \times_{\sfG}
\calZ_K$ can be identified with $\iota^*$ and therefore we have the isomorphism $\ZZ[\sfR^*]\otimes_{\ZZ}
H^*_{\sfG}(\calZ_K;\ZZ) \cong \ZZ[K]$. Since $H^*_{\sfG}(\calZ_K;\ZZ)$ is a free $\ZZ$-module, $\ZZ[K]$ is a free
$\ZZ[\sfR^*]$-module.
 
 On the other hand, the freeness of $\ZZ[K]$ over $\ZZ[\sfR^*]$ implies $\Tor_1^{\ZZ[\sfR^*]}(\ZZ[K],\ZZ) = 0$ and so
 $\iota^*_{\sfR}$ is surjective by Theorem \ref{torsur}. Thus $H_{\sfG}(\calZ_K,\ZZ) \cong
 H_{\sfT}(\calZ_K,\ZZ)\otimes_{\ZZ[\sfR^*]}\ZZ$. By freeness, we can write $H_{\sfT}(\calZ_K;\ZZ)\cong \bigoplus_{e\in
   E} \ZZ[\sfR^*]e$ as a free $\ZZ[\sfR^*]$-module, where $e$'s are even degree classes. Then
 $H_{\sfT}(\calZ_K;\ZZ)\otimes_{\ZZ[\sfR^*]}\ZZ \cong \oplus_{e\in E}\ZZ e$. Thus there are no $\ZZ$-torsions and no odd
 degree classes.
\end{proof}
The same proof as above proves the following (see also Remark \ref{mainoverQ}):
\begin{prop}\label{Q}
$\QQ[K]$ is free over $\QQ[\sfR^*]$ if and only if $H_{\sfG}^*(\calZ_K;\QQ)$ has no odd degree classes.
\end{prop}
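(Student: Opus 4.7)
The plan is to mirror the proof of Proposition \ref{free1}, with the significant simplification that every module over the field $\QQ$ is automatically free, so the $\ZZ$-torsion hypothesis disappears. I will use Theorem \ref{main} in its rational form (Remark \ref{mainoverQ}) as the main bridge between the vanishing of odd degree cohomology and surjectivity of $\iota^*_{\sfR}$.

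For the ``if'' direction, suppose $H^{odd}_{\sfG}(\calZ_K;\QQ)=0$. By Theorem \ref{main} over $\QQ$, the pullback $\iota^*_{\sfR}: H^*_{\sfT}(\calZ_K;\QQ)\to H^*_{\sfG}(\calZ_K;\QQ)$ is surjective. Consequently, each graded piece $H^r_{\sfG}(\calZ_K;\QQ)$ is a finite-dimensional $\QQ$-vector space (hence free and of finite rank in each degree), so the Leray--Hirsch hypotheses are met for the fibration
\[
E\sfT\times_{\sfG}\calZ_K \longrightarrow E\sfR\times_{\sfR}(E\sfT\times_{\sfG}\calZ_K) \longrightarrow B\sfR,
\]
whose fiber-inclusion pullback is precisely $\iota^*_{\sfR}$. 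Leray--Hirsch then gives an isomorphism of $\QQ[\sfR^*]$-modules
\[
\QQ[K] \;\cong\; \QQ[\sfR^*]\otimes_{\QQ} H^*_{\sfG}(\calZ_K;\QQ),
\]
and the right-hand side is manifestly free over $\QQ[\sfR^*]$.

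For the ``only if'' direction, assume $\QQ[K]$ is free over $\QQ[\sfR^*]$. Then $\Tor_1^{\QQ[\sfR^*]}(\QQ[K],\QQ)=0$, so by Theorem \ref{main} over $\QQ$ together with Remark \ref{surj=quot}, one has
\[
H^*_{\sfG}(\calZ_K;\QQ)\;\cong\;\QQ[K]\otimes_{\QQ[\sfR^*]}\QQ.
\]
Choosing a homogeneous $\QQ[\sfR^*]$-basis $\{e\}\subset\QQ[K]$ and noting that $\QQ[K]$ is concentrated in even degrees forces every basis element $e$ to have even degree, so $\QQ[K]\otimes_{\QQ[\sfR^*]}\QQ\cong\bigoplus_{e}\QQ\cdot e$ is purely in even degrees, as required.

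There is no real obstacle here: the only point worth attention is the verification that Leray--Hirsch applies (which requires finite rank of the fiber cohomology in each degree and freeness over $\QQ$), but surjectivity of $\iota^*_{\sfR}$ out of the finitely-generated-in-each-degree ring $\QQ[K]$ takes care of the first, and working over a field takes care of the second.
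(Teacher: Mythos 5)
Your proof is correct and follows essentially the same route as the paper, which simply invokes the argument of Proposition \ref{free1} over $\QQ$: Theorem \ref{main} (in its rational form) to pass between vanishing of odd classes, surjectivity of $\iota^*_{\sfR}$, and the quotient description, plus Leray--Hirsch for the forward direction and a homogeneous free basis of even-degree elements for the converse. The only difference is that you spell out explicitly the simplifications over a field (no torsion hypothesis, automatic freeness of graded pieces), which the paper leaves implicit.
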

With this proposition, together with the local freeness of the $\sfG$-action, we can also prove the following lemma.
\begin{lem}\label{fgQ}
Suppose that the $\sfG$-action on $\calZ_K$ is locally free. If $H_{\sfG}^*(\calZ_K;\QQ)$ has no odd degree, then $\QQ[K]$ is finitely generated over $\QQ[\sfR^*]$.
\end{lem}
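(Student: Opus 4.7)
My plan is to exploit Proposition \ref{Q} to reduce finite generation of $\QQ[K]$ over $\QQ[\sfR^*]$ to the $\QQ$-finite-dimensionality of $H^*_{\sfG}(\calZ_K;\QQ)$, and then use the local freeness hypothesis together with compactness of $\calZ_K$ to establish that finite-dimensionality topologically.

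First, by Proposition \ref{Q}, the assumption $H^{odd}_{\sfG}(\calZ_K;\QQ)=0$ implies that $\QQ[K]$ is free over $\QQ[\sfR^*]$. Choose a homogeneous basis, so that $\QQ[K] \cong \bigoplus_{e\in E}\QQ[\sfR^*]\cdot e$ as a graded $\QQ[\sfR^*]$-module. Tensoring with $\QQ$ over $\QQ[\sfR^*]$, we obtain
\[
\QQ[K]\otimes_{\QQ[\sfR^*]}\QQ \;=\; \QQ[K]/\langle u_1,\dots,u_n\rangle \;\cong\; \bigoplus_{e\in E}\QQ\cdot e,
\]
so $\QQ[K]$ is finitely generated over $\QQ[\sfR^*]$ if and only if $E$ is finite, equivalently, if and only if $\QQ[K]/\langle u_1,\dots,u_n\rangle$ is finite-dimensional over $\QQ$.

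Next I identify this quotient with a topological invariant. The hypothesis $H^{odd}_{\sfG}(\calZ_K;\QQ)=0$ together with Theorem \ref{main} (in its rational form, Remark \ref{mainoverQ}) and Remark \ref{surj=quot} gives
\[
H^*_{\sfG}(\calZ_K;\QQ) \;\cong\; \QQ[K]/\langle u_1,\dots,u_n\rangle.
\]
By the local freeness of the $\sfG$-action and Remark \ref{stackcohomology}, we also have the isomorphism $H^*_{\sfG}(\calZ_K;\QQ)\cong H^*(\calZ_K/\sfG;\QQ)$ with the cohomology of the underlying quotient space.

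Finally, $\calZ_K\subset\CC^m$ is compact and $\sfG\subset\sfT$ is a compact Lie group, so the orbit space $\calZ_K/\sfG$ is compact; moreover, the simplicial (equivalently, finite CW) structure on $\calZ_K$ with its $\sfT$-action descends to a finite CW structure on $\calZ_K/\sfG$ of dimension $2m-\dim\sfG<\infty$, so $H^*(\calZ_K/\sfG;\QQ)$ is a finite-dimensional $\QQ$-vector space. Combining the isomorphisms above, $\QQ[K]/\langle u_1,\dots,u_n\rangle$ is finite-dimensional over $\QQ$, and hence $\QQ[K]$ is finitely generated over $\QQ[\sfR^*]$. The only nontrivial step is the finite-dimensionality assertion, but it follows at once from the compactness of $\calZ_K/\sfG$; no cohomological spectral-sequence argument is required beyond what has already been set up.
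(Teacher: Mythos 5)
Your overall reduction is the same as the paper's: Proposition \ref{Q} turns the hypothesis into freeness, so $\QQ[K]\cong\bigoplus_{e\in E}\QQ[\sfR^*]\cdot e$, and finite generation is equivalent to finite-dimensionality of $\QQ[K]/\lan u_1,\dots,u_n\ran\cong H_{\sfG}^*(\calZ_K;\QQ)$. The gap is in how you establish that finite-dimensionality. Your closing claim that it ``follows at once from the compactness of $\calZ_K/\sfG$'' is not valid: compactness (even compact metric of finite covering dimension) does not bound cohomology --- the Hawaiian earring already has infinitely generated $H^1$. So everything rests on your assertion that the finite CW structure of $\calZ_K$ ``descends'' to a finite CW structure on $\calZ_K/\sfG$, and that step is asserted, not proved, and is not automatic: $\calZ_K$ is a finite $\sfT$-CW complex whose equivariant cells have the form $\sfT/\sfT_\sigma\times D^k$, and their images in $\calZ_K/\sfG$ are of the form $(\sfT/\sfT_\sigma\sfG)\times D^k$, i.e.\ (finite unions of) tori times disks rather than cells. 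The claim can be repaired --- for instance, restrict the $\sfT$-CW structure to a finite $\sfG$-CW structure via Illman's equivariant triangulation theorem and use that the quotient of a finite $\sfG$-CW complex by $\sfG$ is a finite CW complex, or otherwise decompose the pieces $(\sfT/\sfT_\sigma\sfG)\times D^k$ --- but some such input must be supplied, since this is exactly where the real content of the lemma sits. (Your use of Remark \ref{stackcohomology} to get $H_{\sfG}^*(\calZ_K;\QQ)\cong H^*(\calZ_K/\sfG;\QQ)$ from local freeness is fine, and is in fact the only place local freeness enters your argument.)

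For comparison, the paper avoids the quotient space altogether: by Proposition \ref{propret}, $\calZ_K$ is a $\sfT$-equivariant deformation retract of the smooth variety $X_{\Sigma_K}$, the locally free $\sfG$-action makes $[X_{\Sigma_K}/\sfG]$ a differentiable orbifold, and finite-dimensionality of $H_{\sfG}^*(\calZ_K;\QQ)$ follows from the finite-dimensionality of orbifold de Rham cohomology; the remaining steps (freeness via Proposition \ref{Q} and $H_{\sfG}^*\cong H_{\sfT}^*\otimes_{\QQ[\sfR^*]}\QQ$) coincide with yours. Your route through $\calZ_K/\sfG$ is workable and arguably more elementary, but only after you prove or cite the equivariant CW/triangulation statement you currently elide, and after dropping the misleading appeal to compactness alone.
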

\begin{proof}
Since the $\sfG$-action on the smooth variety $X_{\Sigma_K}$ defined in Section \ref{XK} is locally free, we have the
differentiable orbifold $[X_{\Sigma_K}/\sfG]$. By the construction of de Rham cohomology for differentiable orbifolds,
c.f. $\S$ 2.1 \cite{ALR}, $H^*([X_{\Sigma_K}/\sfG]; \RR)$ is finitely dimensional. Since $\calZ_K \inc X_{\Sigma_K}$ is
a $\sfT$-equivariant deformation retract, $H_{\sfG}^*(\calZ_K;\QQ)$ is also finite dimensional. On the other hand, by
Proposition \ref{Q}, if $H_{\sfG}^*(\calZ_K,\QQ)$ has no odd degree, then $\QQ[K]$ is free over $\QQ[\sfR^*]$. Since
$H_{\sfG}^*(\calZ_K;\QQ) \cong H_{\sfT}^*(\calZ_K;\QQ) \otimes_{\QQ[\sfR^*]}\QQ$, the finiteness of
$H_{\sfG}^*(\calZ_K;\QQ)$ implies that $H_{\sfT}^*(\calZ_K;\QQ)$ is finitely generated over $\QQ[\sfR^*]$.
\end{proof}
\subsection{Torsion-freeness}
First we observe the following equivalence.
\begin{lem}\label{Q=Z}
Then $\ZZ[K]$ is torsion-free over $\ZZ[\sfR^*]$ if and only if $\ZZ[K]\otimes\QQ$ is torsion-free over $\QQ[\sfR^*]$.
\end{lem}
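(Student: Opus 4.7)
The plan is to exploit the fact that $\ZZ[K]$ is a free $\ZZ$-module (a $\ZZ$-basis being given by the monomials $x^{\alpha}$ whose support is a face of $K$). From this, the canonical map $\ZZ[K]\to \ZZ[K]\otimes_{\ZZ}\QQ$ is injective, and both $\ZZ[\sfR^*]=\ZZ[u_1,\ldots,u_n]$ and $\QQ[\sfR^*]=\QQ[u_1,\ldots,u_n]$ are integral domains. Combining $\ZZ$-freeness with the ability to clear denominators, the equivalence will essentially be a diagram chase.

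For the forward direction ($\Rightarrow$), I would take a potential torsion relation $r\cdot y = 0$ in $\ZZ[K]\otimes\QQ$ with $r\in\QQ[\sfR^*]$ nonzero and $y\in\ZZ[K]\otimes\QQ$ nonzero, and clear denominators on both sides: write $y=\frac{1}{N}(x\otimes 1)$ with $x\in\ZZ[K]$ and $N\in\ZZ_{>0}$, and $r=\frac{1}{M}\tilde{r}$ with $\tilde{r}\in\ZZ[\sfR^*]$ nonzero. Then $\tilde{r}x=0$ inside $\ZZ[K]\otimes\QQ$; by $\ZZ$-torsion-freeness of $\ZZ[K]$ this forces $\tilde{r}x=0$ already in $\ZZ[K]$. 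Since $\ZZ[K]$ is torsion-free over $\ZZ[\sfR^*]$ and $\tilde{r}\neq 0$, we conclude $x=0$, hence $y=0$, a contradiction.

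For the backward direction ($\Leftarrow$), I would start with $\tilde{r}\in\ZZ[\sfR^*]$ nonzero and $x\in\ZZ[K]$ nonzero satisfying $\tilde{r}x=0$. The injectivity $\ZZ[K]\hookrightarrow\ZZ[K]\otimes\QQ$ shows $x\otimes 1 \ne 0$, while the inclusion $\ZZ[\sfR^*]\hookrightarrow\QQ[\sfR^*]$ shows $\tilde{r}$ remains nonzero in $\QQ[\sfR^*]$; the relation $\tilde{r}(x\otimes 1)=0$ then contradicts torsion-freeness of $\ZZ[K]\otimes\QQ$ over $\QQ[\sfR^*]$.

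There is no real obstacle here; the only thing to be careful about is the use of the definition of torsion-freeness. The underlying rings $\ZZ[\sfR^*]$ and $\QQ[\sfR^*]$ are domains, so ``torsion-free'' means that multiplication by any nonzero ring element is injective on the module, and the two implications above are simply applications of that property after clearing denominators. The step that deserves explicit mention in a clean writeup is the $\ZZ$-freeness of $\ZZ[K]$, which is what lets the whole argument go through.
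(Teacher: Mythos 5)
Your proof is correct and follows essentially the same route as the paper's: one direction uses the $\ZZ$-freeness of $\ZZ[K]$ to see that nonzero elements stay nonzero after tensoring with $\QQ$, and the other clears denominators of a rational torsion relation to produce an integral one. The only difference is presentational (you argue via the contrapositive statements about torsion-freeness, while the paper passes torsion elements back and forth), so nothing further is needed.
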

\begin{proof}
Suppose that $f\not=0$ is a torsion element in $\ZZ[K]$ over $\ZZ[\sfR^*]$, i.e. there is $g \in \ZZ[\sfR^*]$ such that $fg=0$
in $\ZZ[K]$. Since $\ZZ[K]$ is free over $\ZZ$, $f\not=0$ in $\QQ[K]$. Therefore $f$ is also a torsion in $\QQ[K]$ over
$\QQ[\sfR^*]$. On the other hand, suppose that $f\not=0$ is a torsion element of $\QQ[K]$ over $\QQ[\sfR^*]$. Let $g \in
\QQ[\sfR^*]$ such that $fg =0$ in $\QQ[K]$. Let $a$ be the product of denominators of the coefficients of $f$ and $b$ be
the product of denominator of coefficients of $g$. Then the pair of $af \in \ZZ[K]$ and $bg \in \ZZ[\sfR^*]$ gives a
torsion of $\ZZ[K]$ over $\ZZ[\sfR^*]$.
\end{proof}
\begin{thm} \label{surj=>torsionfree}
If $H_{\sfG}^*(\calZ_K;\QQ)$ has no odd degree classes, then $\ZZ[K]$ is torsion-free over $\ZZ[\sfR^*]$.
\end{thm}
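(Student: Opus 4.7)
The plan is to chain together the earlier results to reduce the statement to a trivial implication. First, I would apply Proposition \ref{Q}: the assumption that $H_{\sfG}^*(\calZ_K;\QQ)$ has no odd degree classes immediately yields that $\QQ[K]$ is free as a module over $\QQ[\sfR^*]$.

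Next, I would invoke the general implication $(k1) \Rightarrow (k3)$ recorded in Definition \ref{def:ringConditions}: any free module is torsion-free. Hence $\QQ[K]$ is torsion-free over $\QQ[\sfR^*]$.

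Finally, I would apply Lemma \ref{Q=Z} in the reverse direction to transport torsion-freeness from the rational setting back to the integral setting, concluding that $\ZZ[K]$ is torsion-free over $\ZZ[\sfR^*]$. Since each step is an already-established implication, there is essentially no obstacle; the only mild point of care is keeping track that Lemma \ref{Q=Z} is stated as a genuine equivalence, so that the rational torsion-freeness really does return integral torsion-freeness without any additional hypothesis.
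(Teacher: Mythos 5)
Your proposal is correct and matches the paper's proof essentially verbatim: the paper likewise applies Proposition \ref{Q} to get freeness of $\QQ[K]$ over $\QQ[\sfR^*]$, notes that freeness implies torsion-freeness, and then invokes Lemma \ref{Q=Z} to pass back to $\ZZ[K]$ over $\ZZ[\sfR^*]$. No gaps.
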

\begin{proof} By Proposition \ref{Q}, $\QQ[K]$ is free over $\QQ[\sfR^*]$, therefore it is torsion-free over $\QQ[\sfR^*]$. We conclude that $\ZZ[K]$ is torsion-free over $\ZZ[\sfR^*]$ by Lemma \ref{Q=Z}.
\end{proof}
If $[\calZ_K/\sfG]$ is a quasi-toric orbifold or a complete toric DM stack (Section \ref{bg}), $H_{\sfG}^*(\calZ_K;\QQ)$ has no odd degree classes by Theorem \ref{qtQ} and \ref{dmsQ}. Thus we have
\begin{cor}\label{cortf}
If $[\calZ_K/\sfG]$ is a quasi-toric orbifold or a complete toric DM stack, then $\ZZ[K]$ is torsion-free over $\ZZ[\sfR^*]$.
\end{cor}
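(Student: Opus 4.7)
The plan is to reduce this corollary directly to Theorem \ref{surj=>torsionfree}, which asserts that the torsion-freeness of $\ZZ[K]$ over $\ZZ[\sfR^*]$ follows from the vanishing of $H^{odd}_{\sfG}(\calZ_K;\QQ)$. Thus the only work is to verify that, in each of the two cases listed (quasi-toric orbifold or complete toric DM stack), the rational equivariant cohomology $H^*_{\sfG}(\calZ_K;\QQ)$ is concentrated in even degrees.

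For the quasi-toric orbifold case, I would use Remark \ref{stackcohomology} together with the identification $\calZ_\Delta \simeq_\sfT \calZ_{K_\Delta}$ recalled in Section \ref{PoddarSarkar} to obtain $H^*_{\sfG}(\calZ_{K_\Delta};\QQ) \cong H^*([\calZ_\Delta/\sfG];\QQ)$. By Theorem \ref{qtQ} of Poddar-Sarkar, the right-hand side equals $\QQ[K_\Delta]/\langle u_1,\dots,u_n\rangle$. Since $\QQ[K_\Delta]$ is generated in degree $2$ and each $u_j$ is linear in the $x_i$'s (hence of degree $2$), this quotient lives entirely in even degrees.

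For the complete toric DM stack case, I would combine Proposition \ref{propret} with Remark \ref{stackcohomology} to get $H^*_{\sfG}(\calZ_K;\QQ) \cong H^*_{\sfG_\CC}(X_{\Sigma_K};\QQ) \cong H^*([X_{\Sigma_K}/\sfG_\CC];\QQ)$, and then appeal to Theorem \ref{dmsQ} of Danilov, which identifies this with $\QQ[K]/\langle u_1,\dots,u_n\rangle$. Once again this sits in even degrees for the same reason.

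Having established $H^{odd}_{\sfG}(\calZ_K;\QQ)=0$ in either case, Theorem \ref{surj=>torsionfree} immediately implies that $\ZZ[K]$ is torsion-free over $\ZZ[\sfR^*]$. No real obstacle arises here: the entire argument is a matter of tracing through the identifications already established in Sections \ref{bg}, \ref{TheoremFranz}, and in Theorem \ref{surj=>torsionfree}. The only point requiring mild care is ensuring that the topological/algebraic passage from $[\calZ_K/\sfG]$ to $[X_{\Sigma_K}/\sfG_\CC]$ (needed in the DM-stack case) preserves rational equivariant cohomology, which is exactly the content of Proposition \ref{propret} combined with the observation that $\sfG \inc \sfG_\CC$ is a deformation retract.
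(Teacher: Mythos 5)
Your proposal is correct and matches the paper's own argument: the paper likewise deduces $H^{odd}_{\sfG}(\calZ_K;\QQ)=0$ from Theorem \ref{qtQ} and Theorem \ref{dmsQ} (via the identifications already packaged in Corollaries \ref{quasiDM} and \ref{algDM}) and then applies Theorem \ref{surj=>torsionfree}. Your extra care about tracing the identifications through Remark \ref{stackcohomology} and Proposition \ref{propret} is exactly what the paper's citations implicitly rely on.
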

\begin{rem}
The converse of Theorem \ref{surj=>torsionfree} is not true.  The direct product of weighted projective spaces is a complete toric DM stack and its cohomology has odd degree classes. See Example \ref{nonexm}.
\end{rem} 
The following proposition shows that the vanishing of odd classes in $H_{\sfG}^*(\calZ_K;\QQ)$ implies that the size of $\sfG$ is maximal and it is analogous to Proposition 5.2 \cite{Franz09}.
\begin{prop}\label{torsionmax}
 Let $n'$ be the largest cardinality of a face of $K$. Suppose that $\sfG$ acts on $\calZ_K$ locally freely. If
 $H_{\sfG}^*(\calZ_K;\QQ)$ has no odd degree, then $\dim \sfR = n'$. Furthermore for any subgroup $\sfU \subset \sfG$
 such that $\dim \sfU < \dim \sfG$, $\QQ[K]$ has a torsion over $\QQ[\tilde{\sfR}^*]$ where $\tilde{\sfR}:=\sfT/\sfU$.
\end{prop}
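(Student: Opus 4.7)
The plan is to handle the two assertions separately, both reducing to a Krull-dimension argument once one knows that $\QQ[K]$ is free (and hence finitely generated) over $\QQ[\sfR^*]$. Throughout I use the standard fact that the Stanley-Reisner ring $\QQ[K]$ has Krull dimension exactly $n'$.

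For the first assertion, Lemma \ref{lemlf} applied to the locally free $\sfG$-action immediately gives $\dim \sfG \leq m - n'$, i.e.\ $n := \dim \sfR \geq n'$. For the reverse inequality, the hypothesis that $H_\sfG^*(\calZ_K;\QQ)$ has no odd-degree classes together with Proposition \ref{Q} implies $\QQ[K]$ is free over $\QQ[\sfR^*]$, hence $\Tor_1^{\QQ[\sfR^*]}(\QQ[K],\QQ) = 0$. By Corollary \ref{p22}, the generators $u_1, \dots, u_n$ of $\QQ[\sfR^*]_+$ form a $\QQ[K]$-regular sequence. Since the length of a regular sequence in a Noetherian ring is bounded by its Krull dimension, $n \leq n'$, and equality follows.

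For the second assertion, write $\tilde n := \dim \tilde{\sfR} = m - \dim \sfU$, so that $\tilde n > m - \dim \sfG = n = n'$ by what was just proved. Note that the inclusion $\sfU \subset \sfG$ yields the surjection $\tilde{\sfR} \twoheadrightarrow \sfR$ and the inclusion $\QQ[\sfR^*] \subset \QQ[\tilde{\sfR}^*]$. Suppose for contradiction that $\QQ[K]$ is torsion-free over $\QQ[\tilde{\sfR}^*]$. Since $\QQ[\tilde{\sfR}^*]$ is a domain, any kernel element would annihilate $1 \in \QQ[K]$, so the structural map $\QQ[\tilde{\sfR}^*] \to \QQ[K]$ is injective. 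Lemma \ref{fgQ} gives that $\QQ[K]$ is finitely generated over $\QQ[\sfR^*]$, hence a fortiori over $\QQ[\tilde{\sfR}^*]$, so $\QQ[K]$ is integral over its subring $\QQ[\tilde{\sfR}^*]$. By going-up we then get $\dim \QQ[K] = \dim \QQ[\tilde{\sfR}^*] = \tilde n$, contradicting $\dim \QQ[K] = n' < \tilde n$.

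The main obstacle is ensuring that $\QQ[K]$ is finitely generated over $\QQ[\tilde{\sfR}^*]$; this is not automatic from the Stanley-Reisner definition but is furnished by Lemma \ref{fgQ}, whose proof relies on identifying $\calZ_K$ with a deformation retract of the smooth variety $X_{\Sigma_K}$ together with the finite-dimensionality of the de Rham cohomology of the associated differentiable orbifold $[X_{\Sigma_K}/\sfG]$. With that in hand, both parts become clean applications of standard commutative algebra.
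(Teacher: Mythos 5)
Your proof is correct, but it assembles the result somewhat differently than the paper does. The paper proves both assertions from a single fact: for a finitely generated module $M$ over a Noetherian ring $R$, $\dim M = \dim R/\Ann_R(M)$. Using Lemma \ref{fgQ} and the freeness from Proposition \ref{Q} (so $\Ann_{\QQ[\sfR^*]}\QQ[K]=0$), it gets $n'=\dim\QQ[K]=\dim\QQ[\sfR^*]=\dim\sfR$ directly, and then $\dim\QQ[\tilde{\sfR}^*]>n'$ forces $\Ann_{\QQ[\tilde{\sfR}^*]}\QQ[K]\neq 0$ — a slightly stronger conclusion than yours, since it exhibits a single nonzero polynomial annihilating all of $\QQ[K]$ rather than just asserting the existence of a torsion element. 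You instead split the first assertion into two inequalities: $\dim\sfR\geq n'$ from the topological Lemma \ref{lemlf}, and $\dim\sfR\leq n'$ from freeness $\Rightarrow \Tor_1^{\QQ[\sfR^*]}(\QQ[K],\QQ)=0 \Rightarrow u_1,\dots,u_n$ is a $\QQ[K]$-regular sequence (Corollary \ref{p22}), bounded in length by $\dim\QQ[K]=n'$ since the $u_i$ have positive degree (grade of the ideal they generate is at most its height); note that this half of your argument does not even need Lemma \ref{fgQ}, whereas the paper's does. For the torsion statement you run essentially the paper's dimension count in contrapositive form, replacing the annihilator identity by integrality of $\QQ[K]$ over $\QQ[\tilde{\sfR}^*]$ and going-up; this is fine, though the appeal to $\QQ[\tilde{\sfR}^*]$ being a domain is superfluous — torsion-freeness alone already makes the structure map $\QQ[\tilde{\sfR}^*]\to\QQ[K]$ injective, since a nonzero kernel element would make $1$ a torsion element. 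Both routes ultimately rest on $\dim\QQ[K]=n'$ and (for the second assertion) on the finite generation supplied by Lemma \ref{fgQ}, so the proofs are close in substance; yours trades the one-line module-dimension identity for two standard inequalities plus going-up, at the cost of a marginally weaker conclusion in the second part.
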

\begin{proof}
By Proposition \ref{fgQ}, $\QQ[K]$ is finitely generated over $\QQ[\sfR^*]$ and hence over $\QQ[\tilde{\sfR}^*]$. Moreover $\QQ[K]$ is free over $\QQ[\sfR^*]$ by Proposition \ref{Q} and so $\Ann_{\QQ[\sfR^*]}\QQ[K]=0$. Thus we have
\[
n'=\dim \QQ[K] = \dim \QQ[\sfR^*] =\dim \frac{\QQ[\tilde{\sfR}^*]}{\Ann_{\QQ[\tilde{\sfR}^*]} \QQ[K]}.
\]
Thus $\dim \sfR=\dim \QQ[\sfR^*]=n'$ and $\dim \QQ[\tilde{\sfR}^*] = \dim \tilde{\sfR} > \dim \sfR$ implies $\Ann_{\QQ[\tilde{\sfR}^*]} \QQ[K]\not=0$.
\end{proof}
The above proposition doesn't allow us to construct a torsion element explicitly. Below we show a way to find one for the case of toric manifolds .
\subsubsection{How to find a torsion element}
We will use the GKM description of the equivariant cohomology of a toric manifold. Let $\Delta$ be an $n$-dimensional
Delzant polytope, i.e. a labeled polytope associated to a toric manifold. Let $H_1,\cdots, H_m$ be the facets of
$\Delta$. As in Section \ref{PoddarSarkar} and \ref{LermanTolman}, $\sfT=\U(1)^m$, $\sfR=\U(1)^n$ and $B:\sfT \to \sfR$
is given by the $n\times m$ integral matrix $B:=[\beta_1,\cdots,\beta_m]$ where $\beta_i$ is the inward primitive normal
vector for $H_i$. We adopt Notation \ref{notation}. Let $v_1,\cdots, v_d$ be the vertices of $\Delta$ and let $e_{ij}$
be the edge connecting $v_i$ and $v_j$.

The $\sfR$-equivariant cohomology of $\sfX_{\Delta}$ coincides with the $\sfT$-equivariant cohomology of
$\calZ_{\Delta}$ and it is the Stanley-Reisner ring $\ZZ[K_{\Delta}]$ for the associated simplicial complex
$K_{\Delta}$. Since $\Delta$ is Delzant, $\ZZ[K_{\Delta}]$ is free over $\ZZ[\sfR^*]$ (c.f. Theorem \ref{DJsmooth},
Proposition \ref{free1}).

The injectivity theorem of the Hamiltonian $\sfR$-action on $\sfX_{\Delta}$ states that pulling back to fixed points gives the injective map of $\QQ[\sfR^*]$-algebras:
\[
\xymatrix{
\QQ[K_{\Delta}] \ar[r]^{\Phi\ \ \ \ \ \ \ \ \ \ } & \bigoplus_{i=1}^d \QQ[u_1,\cdots, u_n]
}
\]
and the GKM theorem states that the image of $\Phi$ is given by
\[
\mbox{GKM}(\Delta):=\left\{\left.\sff:=(\sff_1,...,\sff_d)\in\bigoplus_{i=1}^{d}\QQ[u_1,...,u_n]\right|\ \alpha_{ij}| (\sff_{i}-\sff_{j})\ \mbox{for each edge}\ e_{ij}\right\}
\]
where $\alpha_{ij}$ is a linear polynomial in $u_1,...,u_n$ associted to each edge $e_{ij}$ in the GKM theorem.
The following proposition describes the image of $x_i \in \QQ[K_{\Delta}]$ explicitly in terms of the matrix $B$.
\begin{prop} \label{GKM}
Let $v:=H_{i_1}\cap \cdots \cap H_{i_n}$ be a vertex of $\Delta$ and $\sigma_v=\{i_1<\cdots<i_n\}$. Let $B_v:=[\beta_{i_1},\cdots, \beta_{i_n}]$. Let $\alpha^v_r$ be the $r$-th row of $B_v^{-1}$. Then
\[
\Phi(x_{i})_v = 
\begin{cases}
\alpha^v_r \cdot (u_1,\cdots,u_n)^T & \mbox{ if } i = i_r \mbox{ for some } r=1,\cdots,n\\
0 & \mbox{ if otherwise}.
\end{cases}
\]
\end{prop}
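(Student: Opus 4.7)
The plan is to interpret $x_i\in\QQ[K_\Delta]$ geometrically as the $\sfR$-equivariant Thom class of the toric divisor $D_i\subset\sfX_\Delta$ corresponding to the facet $H_i$, and then compute $\Phi(x_i)_v$ via equivariant localization at the fixed point $v$. This reduces the proposition to a standard tangent-weight computation in a smooth affine chart.

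First I would recall that under the composite isomorphism $\QQ[K_\Delta]\cong H_{\sfT}^*(\calZ_{K_\Delta};\QQ)\cong H_{\sfR}^*(\sfX_\Delta;\QQ)$ from Section \ref{bg}, the generator $x_i$ is sent to the $\sfR$-equivariant Thom class $[D_i]^{\sfR}$ of the toric divisor attached to $H_i$. This is because $x_i\in H^*(B\sfT)$ is the Thom class of the coordinate hyperplane $\{z_i=0\}\subset\CC^m$, and restriction to $\calZ_{K_\Delta}$ followed by passing to the quotient by $\sfG$ sends it to the equivariant Thom class of $D_i$. The map $\Phi$ is the pullback along the inclusion $\sfX_\Delta^{\sfR}\inc\sfX_\Delta$ of the fixed locus, which by the Delzant hypothesis consists precisely of the vertices of $\Delta$; writing $\iota_v:\{v\}\inc\sfX_\Delta$, we have $\Phi(x_i)_v=\iota_v^*[D_i]^{\sfR}$. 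If $i\notin\sigma_v$ then $v\notin D_i$ and this restriction vanishes, giving the second case.

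Next, suppose $i=i_r\in\sigma_v$. Since $v$ lies on the smooth divisor $D_{i_r}$, the self-intersection formula identifies $\iota_v^*[D_{i_r}]^{\sfR}$ with the $\sfR$-equivariant weight on the one-dimensional normal bundle $N_v D_{i_r}\subset T_v\sfX_\Delta$. The Delzant condition ensures that $\beta_{i_1},\dots,\beta_{i_n}$ is a $\ZZ$-basis of $\sfN_\sfR$, so $\sfX_\Delta$ admits an $\sfR$-equivariant smooth affine chart $U_v=\Spec\QQ[\sigma_v^\vee\cap\sfN_\sfR^*]\cong\CC^n$ centered at $v$ in which $D_{i_r}$ is cut out by the $r$-th coordinate. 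The $\sfR$-weights on these $n$ coordinates are the minimal generators of $\sigma_v^\vee$, namely the basis of $\sfN_\sfR^*$ dual to $\beta_{i_1},\dots,\beta_{i_n}$, which by definition of the inverse matrix are the rows of $B_v^{-1}$. Thus the $\sfR$-weight on $N_v D_{i_r}$ is the $r$-th row $\alpha^v_r$, and evaluating against the standard basis yields $\Phi(x_{i_r})_v=\alpha^v_r\cdot(u_1,\dots,u_n)^T$, as claimed.

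The only real subtlety lies in the geometric identification $x_i=[D_i]^{\sfR}$, which traces back to the original construction of the isomorphism $H_{\sfT}^*(\calZ_{K_\Delta})\cong\ZZ[K_\Delta]$ in \cite{DavisJanuszkiewicz91} (see also \cite[Chapter 6]{BP}); once this is in hand, the remaining weight computation is a routine exercise in smooth toric geometry. One should also check conventions carefully to confirm that with the inward-primitive-normal convention used to define $B$, the tangent weights at $v$ are genuinely the rows of $B_v^{-1}$ (rather than $-B_v^{-1}$), but this is immediate from the description of $U_v$ as $\Spec\QQ[\sigma_v^\vee\cap\sfN_\sfR^*]$.
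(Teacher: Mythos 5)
Your argument is correct, but it proves the key case $i=i_r\in\sigma_v$ by a genuinely different mechanism than the paper. You identify $x_i$ with the $\sfR$-equivariant Thom class of the invariant divisor $D_i$ and then invoke localization: the restriction to the fixed point is the equivariant Euler class of the normal line $N_vD_{i_r}$, whose weight you read off as the $r$-th element of the basis dual to $\beta_{i_1},\dots,\beta_{i_n}$, i.e.\ the row $\alpha^v_r$ of $B_v^{-1}$. The paper instead only uses geometry for the easy vanishing statement ($x_j|_v=0$ for $j\notin\sigma_v$, proved via triviality of the pulled-back circle bundle $L_j$ over the Borel space of the fixed orbit), and then gets the nontrivial case by pure linear algebra: since the $i_k$-th column of $B_v^{-1}B$ is the $k$-th standard basis vector, one has $\alpha^v_r\cdot(u_1,\dots,u_n)^T=x_{i_r}+\sum_{j\notin\sigma_v}c_jx_j$ in $\QQ[K_\Delta]$, and restricting at $v$ kills the extra terms while $\Phi$ being a $\QQ[\sfR^*]$-algebra map sends $\alpha^v_r\cdot u$ to itself. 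The payoff of the paper's route is that it needs neither the identification $x_i=[D_i]^{\sfR}$ nor any tangent-weight or sign-convention analysis---exactly the two points you defer to references or to a "convention check," and the latter is where the real content of your chart computation sits (weights on coordinate functions versus on tangent vectors, and compatibility of $u_i=\sum_jB_{ij}x_j$ with the identification of $H^2(B\sfR)$ with characters of $\sfR$). The payoff of your route is conceptual: it exhibits the restrictions as genuine fixed-point weights and recovers, as a byproduct, the paper's subsequent remark that $\alpha^v_r$ is the primitive outward edge vector at $v$. If you keep your version, spell out the Thom-class identification (it follows from the Davis--Januszkiewicz description, as you say) and do the sign check explicitly rather than asserting it is immediate; alternatively, note that once you have the vanishing case, the paper's two-line algebraic argument finishes the proof without any of that.
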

\begin{proof}
Recall that $x_i$ is the first Chern class of the pullback of the circle bundle $L_i:=E\sfT\times_{\sfT} \sfT_i \to
B\sfT$ along $E\sfT\times_{\sfT} \calZ_{\Delta} \to B\sfT$ where $\sfT_i=\U(1)^{\{i\}} \times \{1\}^{[m]\backslash
  \{i\}} \subset \sfT$ (see Notation \ref{notation0}). The Borel space of the fixed point is $E\sfT\times_{\sfT} ((\sfT
\times v)/\sfT_{\sigma_v})$ as a subspace of $E\sfT \times_{\sfT} \calZ_{\Delta}$ (See the definition of
$\calZ_{\Delta}$ in Section \ref{PoddarSarkar}). Thus $\Phi(x_i)_v$ is the first Chern class of the pullback of the line
bundle $L_i$ along the projection $\pi: E\sfT\times_{\sfT} ((\sfT \times v)/\sfT_{\sigma_v}) \to B\sfT$. Since $L_i
\cong B\sfT_{[m]\backslash \{i\}} \times E\sfT_i$ and $E\sfT\times_{\sfT} ((\sfT \times v)/\sfT_{\sigma_v})
\cong  B\sfT_{\sigma_v} \times E\sfT_{[m]\backslash \sigma_v}$, it is clear that, if $i \not \in \sigma_v$, then $\pi^*L_i$ is trivial.

Now suppose that $i=i_r$ for some $r=1,\cdots,n$. Recall from Notation \ref{notation} that $u_k=\sum_{j=1}^m B_{kj}x_j$
where $B=(B_{kj})$. We observe that, for each for each $k=1,\cdots,n$, the $i_k$-th column of $B_v^{-1}\cdot B$ is
$(0,\cdots,0,1, 0\cdots,0)^T$ where $1$ is located at $k$-th entry. Therefore the $r$-th row of $B_v^{-1} \cdot B \cdot
(x_1,\cdots,x_m)^T$ is $x_{i_r} + \sum_{j \in [m]\backslash \sigma_v}^m c_jx_j$ for some integers $c_j$. Thus
\[
\alpha^v_r \cdot (u_1,\cdots,u_n)^T=\alpha^v_r \cdot B \cdot (x_1,\cdots,x_m)^T = x_i + \sum_{j \in [m]\backslash \sigma_v}^m c_jx_j.
\]
From the previous argument, we have $\Phi(x_i)|_v=\Phi(x_i+\sum_{j \in [m]\backslash \sigma_v}a_jx_j)|_v$ for any
$a_j\in \QQ$. Also since $\Phi$ is a $\QQ[\sfR^*]$-algebra homomorphism, $\Phi(u)=(u,\cdots,u)$ for any linear
combination $u$ of $u_1,\cdots, u_n$. Thus
\[
\Phi(x_i)|_v=\left.\Phi\left(x_i+\sum_{j \in [m]\backslash \sigma_v}c_jx_j\right)\right|_v = \left.\Phi\left(  \alpha^v_r \cdot (u_1,\cdots,u_n)^T \right)\right|_v = \alpha^v_r \cdot (u_1,\cdots,u_n)^T.
\]
\end{proof}
\begin{rem}
The row vector $\alpha^v_r$ is perpendicular to $\beta_{i_k}, k\not=r$ and is parallel to the edge $\cap_{k\not=r}
H_k$. Furthermore since the inner product of $\alpha^v_r$ and $\beta_{i_r}$ is $1$, we can conclude that it is the
primitive vector parallel to the edge and pointing out from $v$ along the edge.
\end{rem}
\begin{cor}\label{torxx}
Using the notation above, let $\sfU \subset \sfG$ be a subtorus of dimension $\dim \sfG -1$. Let
$\tilde{\sfR}:=\sfT/\sfU$ and $\ZZ[\tilde{\sfR}^*]=\ZZ[u_1,\cdots, u_n, u_{n+1}]$. Let $v=H_{i_1}\cap \cdots \cap
H_{i_n}$ be a vertex of $\Delta$. Then $x_{i_1}\cdots x_{i_n}$ is a torsion element in $\QQ[K_{\Delta}]$ over
$\QQ[\tilde{\sfR}^*]$.
\end{cor}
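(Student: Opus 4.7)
The plan is to produce an explicit nonzero $g \in \QQ[\tilde{\sfR}^*]$ that annihilates $x_{i_1}\cdots x_{i_n}$ in $\QQ[K_\Delta]$, using a dimension count among the linear forms $u_1,\dots,u_{n+1}$.

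First I would observe a multiplicative zero lemma in the Stanley--Reisner ring: because $\sigma_v = \{i_1,\dots,i_n\}$ is a maximal face of $K_\Delta$ (each vertex of a simple $n$-polytope lies on exactly $n$ facets), adjoining any $j\notin\sigma_v$ produces a non-face, so $x_j\cdot x_{i_1}\cdots x_{i_n}=0$ in $\QQ[K_\Delta]$. Consequently, for any linear form $\ell=\sum_i c_ix_i\in\QQ[x_1,\dots,x_m]$ one has
\[
\ell\cdot x_{i_1}\cdots x_{i_n} \;=\; \sum_{r=1}^n c_{i_r}\,x_{i_r}\cdot x_{i_1}\cdots x_{i_n}\quad\text{in } \QQ[K_\Delta].
\]
That is, only the components of $\ell$ along $x_{i_1},\dots,x_{i_n}$ survive multiplication by $x_{i_1}\cdots x_{i_n}$.

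Next I would apply this to the generators of $\QQ[\tilde\sfR^*]$. Writing $u_k=\sum_{i=1}^m \tilde B_{ki}x_i$ for $k=1,\dots,n{+}1$, where $\tilde B$ is the integer matrix induced by $\sfT\to\tilde\sfR$, the computation above gives
\[
u_k\cdot x_{i_1}\cdots x_{i_n}=\sum_{r=1}^n \tilde B_{k,i_r}\,x_{i_r}\cdot x_{i_1}\cdots x_{i_n}.
\]
The matrix $C:=(\tilde B_{k,i_r})_{1\le k\le n+1,\,1\le r\le n}$ is $(n{+}1)\times n$, so it has a nonzero left-null vector $(c_1,\dots,c_{n+1})\in\QQ^{n+1}$. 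Define $g:=\sum_{k=1}^{n+1} c_k u_k\in\QQ[\tilde\sfR^*]$. Then by construction $g\cdot x_{i_1}\cdots x_{i_n}=0$ in $\QQ[K_\Delta]$, while $g\neq 0$ in $\QQ[\tilde\sfR^*]$ because $u_1,\dots,u_{n+1}$ are algebraically independent generators of the polynomial ring $\QQ[\tilde\sfR^*]$. Finally, $x_{i_1}\cdots x_{i_n}\neq 0$ in $\QQ[K_\Delta]$ since $\sigma_v\in K_\Delta$, so this product is a genuine torsion element over $\QQ[\tilde\sfR^*]$.

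There is essentially no obstacle beyond book-keeping; the only point worth double-checking is the maximality of $\sigma_v$ among faces of $K_\Delta$, which is automatic because $\Delta$ is a simple $n$-polytope and therefore the vertices of $\Delta$ correspond to the facets (maximal simplices) of $K_\Delta$. I would not attempt to go through the GKM picture of Proposition~\ref{GKM} here, since the direct Stanley--Reisner computation is shorter and makes the role of the extra variable $u_{n+1}$ transparent: it is the pigeonhole step ``$n{+}1$ linear forms in a rank-$n$ quotient must be dependent'' that provides the torsion.
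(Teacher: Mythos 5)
Your argument is correct, but it is genuinely different from the one in the paper. The paper's proof runs through the GKM/localization picture: by Proposition \ref{GKM}, $\Phi(x_{i_1}\cdots x_{i_n})$ restricts to zero at every vertex $w\not=v$, so multiplying by the linear form $u_{n+1}-\sum_{i=1}^n a_iu_i$ (chosen so that its restriction at $v$ vanishes, where $\Phi(u_{n+1})|_v=\sum_i a_iu_i$) kills the image, and the injectivity theorem for the Hamiltonian action then pulls the relation back to $\QQ[K_{\Delta}]$. You instead work entirely inside the Stanley--Reisner ring: since $\sigma_v$ is a maximal face, every $x_j$ with $j\notin\sigma_v$ annihilates $x_{i_1}\cdots x_{i_n}$, so each $u_k$ acts on this monomial only through its $\sigma_v$-components, and the pigeonhole step (an $(n{+}1)\times n$ matrix has a nontrivial left kernel) produces a nonzero $g\in\QQ[\tilde{\sfR}^*]$ with $g\cdot x_{i_1}\cdots x_{i_n}=0$; the nonvanishing of $g$ and of the monomial are exactly as you say. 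Your route is shorter, avoids the injectivity/GKM machinery altogether, and in particular does not use the Delzant (smoothness) hypothesis -- it works for any simple polytope, indeed for any maximal face of any $K$ once more than $n$ linear forms act, which matches the spirit of Proposition \ref{torsionmax}. What the paper's approach buys is the explicit fixed-point interpretation (the torsion element is the class supported at the single vertex $v$, and the annihilator is the GKM weight-type linear form vanishing there), which is the point of that subsection and of the worked $\CP^1\times\CP^1$ example; note also that the two annihilators essentially agree, since the restriction $\Phi(u_{n+1})|_v$ is computed from precisely the $\sigma_v$-columns of the matrix, i.e.\ your left-null vector normalized to have coefficient $1$ on $u_{n+1}$.
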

\begin{proof}
First note that $x_{i_1}\cdots x_{i_n}$ is a none zero element in $\ZZ[K]$ and, by the above proposition,
$\Phi(x_{i_1}\cdots x_{i_n})|_w=0$ for all $w\not=v$. Thus we have $(u_{n+1}-\sum_{i=1}^{n}a_{i}u_{i}) \cdot
\Phi(x_{i_1}\cdots x_{i_n})=0$ if we let $\Phi(u_{n+1})|_{v}=\sum_{i=1}^{n}a_{i}u_{i}$. By the injectivity theorem,
$(u_{n+1}-\sum_{i=1}^{n}a_{i}u_{i})\cdot (x_{i_1}\cdots x_{i_n})=0$. Since $u_{n+1}-\sum_{i=1}^{n}a_{i}u_{i}$ is
none-zero by definition, we conclude that $x_{i_1}\cdots x_{i_n}$ is a torsion element over $\ZZ[\tilde{\sfR}^*]$.
\end{proof}
\begin{exm}
Let $\Delta$ be the Delzant polytope that is the unit square where $H_i$'s are facets and $v_j$'s are vertices: 
\[
\xymatrix{
\bullet_{v_4} \ar@{-}[r]^{H_4}\ar@{-}[d]_{H_1}&\bullet_{v_3}\ar@{-}[d]^{H_3} \\
\bullet_{v_1} \ar@{-}[r]_{H_2}&\bullet_{v_2} 
}
\]
Let $\sfT=\sfU(1)^4$, $\sfR=\sfU(1)^2$ and the map $B:\sfT \to \sfR$ given by the matrix $\left(\begin{array}{cccc}1 & 0
  & -1 & 0 \\ 0 & 1 & 0 & -1 \\ \end{array}\right)$. The kernel of $B$ is $\sfG= \{(t,s,t^{-1},s^{-1})\}$. The
corresponding toric manifold is $\CP^1\times \CP^1$. By Proposition \ref{GKM}, the injectivity map $\Phi:
\QQ[K_{\Delta}] \to \bigoplus_{i=1}^{4}\QQ[u_1,u_2]$ is given by
\[
\Phi(x_1)=(u_1,0,0,u_1),\ \ \  \Phi(x_2)=(u_2,u_2,0,0), \ \ \ \Phi(x_3)=(0,-u_1,-u_1,0),\ \ \ \Phi(x_4)=(0,0,-u_2,-u_2).
\]
where $u_1=x_1-x_3,u_2=x_2-x_4$.
The GKM theorem states that the image can be described by GKM condition:
\[
\im(\Phi)=\left\{ \left.(f_1,f_2,f_3,f_4) \in\bigoplus_{i=1}^{4}\QQ[u_1,u_2]\    \right|\   u_1|f_1-f_2, \ \ u_2|f_2-f_3, \ \ u_1|f_3-f_4, \ \ u_2|f_4-f_1 \right\}.
\]
Let $\sfU:=\{(1,s,1,s^{-1}\} \subset \sfG$ be the subtorus and $\tilde{\sfR}=\sfT/\sfU$.  By Proposition
\ref{torsionmax}, $\QQ[K_{\Delta}]$ has torsion elements as a module over $\QQ[\tilde{\sfR}^{*}] =\QQ[u_1,u_2,u_3]$,
where we can take $u_3=x_2+x_3-x_4$ and so $\Phi(u_{3})=(u_2,-u_1+u_2,-u_1+u_2,u_2)$.

By Corollary \ref{torxx}, $x_1x_2$ is a torsion element. Indeed, $\Phi(x_1x_2) = (u_1u_2,0,0,0)$. Consider $u_3-u_2 \in
\QQ[\tilde{\sfR}^*]$. We have $\Phi(u_3-u_2)=(0,-u_1,-u_1,0)$. Since $(0,-u_1,-u_1,0)\cdot (u_1u_2,0,0,0) =(0,0,0,0)$
and by the injectivity, $(u_3-u_2)\cdot x_1x_2 = 0$ in $\QQ[K_{\Delta}]$. Thus $x_1x_2 \in \QQ[K_{\Delta}]$ is a torsion
element over $\QQ[\tilde{\sfR}^*]$. It is not hard to see what are doing here works equally well for general Delzant
polytope $\Delta$ in any dimension.
\end{exm}
\subsection{An injectivity theorem and freeness}
Suppose that a connected subtorus $\sfG$ of $\sfT$ acts on $\calZ_K$ locally freely and consider the a torus $\sfW$ such
that $\sfG \varsubsetneq\sfW \subset \sfT$ and $m':=\dim \sfW$.  Let $F \subset \calZ_K$ be the set of
$(m-n)$-dimensional $\sfW$-orbits. In this section, we discuss the injectivity of
\[
H_{\sfW}(\calZ_K;\ZZ) \to H_{\sfW}(F;\ZZ).
\] 
We have the following injectivity result for a $\sfW$-action on $\calZ_K$ when $[\calZ_K/\sfG]$ is a symplectic compact toric orbifold. 
\begin{thm}\label{Injectivity}
Suppose that $[\calZ_K/\sfG]$ is a symplectic compact toric orbifold corresponding to a labeled polytope $(\Delta,
\sfb)$. Suppose that the stabilizer of any point $x \in \calZ_K$ in $\sfW$ is connected. Then $H_{\sfT}(\calZ_K;\ZZ) \to
H_{\sfW}(F;\ZZ)$ is injective. In particular, $H_{\sfT}(\calZ_K;\ZZ)$ is free over $\ZZ[(\sfT/\sfW)^*]$.
\end{thm}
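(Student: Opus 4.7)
The plan is to transport the problem to an integer-coefficient equivariant injectivity theorem for the Hamiltonian action of the quotient torus $\sfS := \sfW/\sfG$ on the symplectic toric orbifold $\calX := [\calZ_K/\sfG]$, and then extract the freeness from the structure of the fixed locus. First I identify $F$: since $\sfG$ acts locally freely with $\dim \sfG = m - n$, a point $x \in \calZ_K$ has a $\sfW$-orbit of dimension $m - n$ iff that orbit coincides set-theoretically with its $\sfG$-orbit, iff $[x] \in \calX$ is fixed by $\sfS$. Thus $F = \pi^{-1}(\calX^{\sfS})$, and via the stacky identifications of Section \ref{bg} and Remark \ref{stackcohomology} the map in question is the composition of restriction $H_{\sfR}^{*}(\calX;\ZZ) \to H_{\sfS}^{*}(\calX;\ZZ)$ with pullback $H_{\sfS}^{*}(\calX;\ZZ) \to H_{\sfS}^{*}(\calX^{\sfS};\ZZ)$ to the fixed locus.

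For the pullback to the fixed locus I appeal to the Morse--Bott theory of the $\sfS$-moment map $\mu_{\sfS}$: a generic component $f := \langle \xi, \mu_{\sfS}\rangle$ is Morse--Bott with critical set $\calX^{\sfS}$, and all Morse indices are even because the negative normal bundles inherit complex structures. The hypothesis that every $\sfW$-stabilizer is connected translates into the $\sfS$-equivariant Euler classes of these negative normal bundles being non-zero-divisors in $\ZZ[\sfS^{*}]$, which is precisely what is needed for the Thom--Gysin sequences along the Morse stratification to split as short exact sequences with integer coefficients. Iterating the splittings produces the injection $H_{\sfS}^{*}(\calX;\ZZ) \hookrightarrow H_{\sfS}^{*}(\calX^{\sfS};\ZZ)$, and composing with the restriction from $\sfR$-equivariance to $\sfS$-equivariance yields the desired injection $H_{\sfT}^{*}(\calZ_K;\ZZ) \hookrightarrow H_{\sfW}^{*}(F;\ZZ)$.

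For the freeness claim, each connected component of $\calX^{\sfS}$ is itself a compact symplectic toric suborbifold corresponding to a face of $\Delta$ fixed by $\sfS$, whose integer $\sfT$-equivariant cohomology is concentrated in even degrees by the same Morse argument and hence (by Theorem \ref{DJsmooth} in the smooth strata, and its natural extension to the remaining toric suborbifolds under our stabilizer hypothesis) a finitely generated free module over $\ZZ[(\sfT/\sfW)^{*}]$. Consequently $H_{\sfW}^{*}(F;\ZZ)$ is free over $\ZZ[(\sfT/\sfW)^{*}]$, and the previous injection realizes $\ZZ[K] = H_{\sfT}^{*}(\calZ_K;\ZZ)$ as a submodule of a finitely generated free module over the polynomial ring $\ZZ[(\sfT/\sfW)^{*}]$. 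A Hilbert-series comparison, using that $\ZZ[K]$ has only even-degree generators and that its generic rank as a $\ZZ[(\sfT/\sfW)^{*}]$-module matches that of $H_{\sfW}^{*}(F;\ZZ)$ after localization, then upgrades the torsion-free embedding to an isomorphism onto a free direct summand.

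The main obstacle I anticipate is the integer-coefficient Morse--Bott splitting. Over $\QQ$ this is the classical Kirwan argument, but the integer version for the orbifold $\calX$ requires careful control of the equivariant Euler classes of the negative normal bundles of $\calX^{\sfS}$: the connected-stabilizer hypothesis is essential precisely to rule out integer torsion coming from orbifold isotropy normal to the fixed locus, and verifying this rigorously in the labeled-polytope framework of Lerman--Tolman is where the subtlety lies. The rest of the argument (identification of $F$, Hilbert-series upgrade) is formal given this input.
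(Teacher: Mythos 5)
Your overall strategy is the same as the paper's: pass to the residual torus $\sfW/\sfG$ acting on the symplectic toric orbifold $[\calZ_K/\sfG]$, identify $F$ with (the preimage of) its fixed locus, and run an integral Kirwan/Morse--Bott injectivity argument whose validity over $\ZZ$ hinges on the connected-stabilizer hypothesis. But the step you yourself flag as ``where the subtlety lies'' is exactly the mathematical content of the theorem, and you never supply it. What must be checked is that for each component $F_\sigma=\mu^{-1}(\Delta_\sigma)$ the $\sfW_\sigma$-weights on the normal bundle $\bigoplus_{i\in\sigma}\CC\,\partial/\partial z_i$, i.e.\ the images $A_\sigma^*(\lambda_i)\in\sfN_{\sfW_\sigma}^*$ of the standard weights, are nonzero and \emph{primitive}; only then are the integral equivariant Euler classes non-zero-divisors and only then does the integral injectivity theorem (Remark 4.10 of the cited Harada--Holm type result) apply. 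The paper proves this in a separate lemma by observing that connectedness of $\sfW_{\sigma\backslash\{i\}}=\ker(\sfW_\sigma\to\sfT_{\{i\}})$ forces a split short exact sequence of tori, hence primitivity of the induced weight. Asserting that ``connected stabilizers rule out integer torsion'' without this argument leaves the theorem unproved; also note that the paper additionally needs the structural Lemma identifying each component of $F$ as $\mu^{-1}(\Delta_\sigma)$ with a single stabilizer $\sfW_\sigma=\sfW\cap\sfT_\sigma$, which you assume rather than derive.

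Your deduction of freeness is also not sound as written. Realizing $\ZZ[K]$ as a torsion-free submodule of a free $\ZZ[(\sfT/\sfW)^*]$-module of the same generic rank does not make it free or a direct summand: over $\ZZ[u]$ the ideal $(2,u)$ is torsion-free of rank one inside a free module, yet not free, and no Hilbert-series comparison can repair this because graded Hilbert functions do not detect such failures over $\ZZ$. The paper's route avoids this entirely: once $H_\sfW(\calZ_K;\ZZ)\to H_\sfW(F;\ZZ)$ is injective and $H_\sfW(F;\ZZ)=\bigoplus_\sigma H(B\sfW_\sigma;\ZZ)\otimes H(F_\sigma/(\sfW/\sfW_\sigma);\ZZ)$ is even and torsion-free (because each $F_\sigma/(\sfW/\sfW_\sigma)$ is a genuine compact toric \emph{manifold}, again using connected stabilizers), it follows that $H_\sfW(\calZ_K;\ZZ)$ is even and $\ZZ$-torsion-free, and then Proposition \ref{free1} (the Leray--Hirsch argument, applied with $\sfW$ in place of $\sfG$) gives freeness of $\ZZ[K]=H_\sfT(\calZ_K;\ZZ)$ over $\ZZ[(\sfT/\sfW)^*]$. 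Relatedly, your appeal to ``Theorem \ref{DJsmooth} and its natural extension to the remaining toric suborbifolds'' is unjustified: integral cohomology of toric orbifolds can have torsion (weighted projective spaces), so you must argue, as the paper does, that the relevant quotients are actually smooth toric manifolds, not orbifolds.
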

\begin{proof}
Let $\{H_1,\cdots, H_m\}$ be the set of all facets of $\Delta$. The simplicial complex $K$ is the one associated to
$\Delta$ and $\tau \in K$ iff $\Delta_{\tau} := \cap_{i\in\tau}H_i \not=\varnothing$. Let $F_a$ be a connected component
of $F$. Then by Lemma 3.4 \cite{HM} and our assumption of connected isotropy groups, the stabilizers of every $x \in
F_a$ in $W$ coincide. Let $W_a$ be the stabilizer of points in $F_a$. Let $\mu: \calZ_K \to \Delta$ be the moment map.
Note that this is the quotient map by the action of $\sfT$. First we show
\begin{lem}\label{sub1}
$F_a = \mu^{-1}(\Delta_{\sigma})$ for some $\sigma \in K$. In particular, the stabilizer of each point $x \in F_a$ is $\sfW_{\sigma}:=\sfW \cap \sfT_{\sigma}$.
\end{lem}
 \emph{Proof of Lemma \ref{sub1}}: For $x \in F_a$, let $\sigma_x:=\{ i \in [m], x_i = 0\}$. Then
 $\mu^{-1}(\Delta_{\sigma_x}) \subset F_a$ and the unique stabilizer for $F_a$ is given by $\sfW_a=\sfW \cap
 \sfT_{\sigma_x}$. Note that $\sigma_x\not=\varnothing$ since $m >n$. It suffices to show that there is an element $x
 \in F_a$ such that $\sigma_x$ is the unique minimal subset among the collection of subsets, $\{ \sigma_y \ |\ y\in
 F_a\}$.  Let $\sigma_x$ and $\sigma_y$ be minimal for some $x,y\in F_a$. Suppose that $\sigma_x \not = \sigma_y$ and
 consider $z \in \mu^{-1}(\Delta_{\sigma_x \cap \sigma_y})$ such that $\sigma_z = \sigma_x \cap \sigma_y$. Since
 $W_a=\sfW \cap \sfT_x \cap \sfT_{\sigma_y}=\sfW_z$, $z \in F$ by dimension counting.  The connectivity of $F_a$
 then implies that $z \in F_a$. This contradicts to the assumption that $\sigma_x$ and $\sigma_y$ are minimal, so
 $\sigma_x=\sigma_y$. \qed

Now let $\{F_{\sigma}\}$ be the set of connected components of $F$ where $F_{\sigma} = \mu^{-1}(\Delta_{\sigma})$. For
each $\sigma$, choose a splitting $\sfW=\sfW_{\sigma} \times (\sfW/\sfW_{\sigma})$. The target of the injectivity map is
computed as follows:
\[
H_{\sfW}(F,\ZZ) = \bigoplus_{\sigma} H_{\sfW} (F_{\sigma}, \ZZ) = \bigoplus_{\sigma} H(B\sfW \times_{\sfW} F_{\sigma}, \ZZ) = \bigoplus_{\sigma} H(BW_{\sigma},\ZZ) \otimes H(F_{\sigma}/(\sfW/\sfW_{\sigma}), \ZZ). 
\]
Now we show that $F_{\sigma}/(\sfW/\sfW_{\sigma})$ is a compact toric symplectic manifold so that $H(F_{\sigma}/(\sfW/\sfW_{\sigma}), \ZZ)$ has only even degree and has no $\ZZ$-torsion. Since $[F_{\sigma}/\sfG]$ is the suborbifold of $\sfW/\sfG$-fixed orbifold points, it is a symplectic orbifold (c.f. \cite[p.4210, Cor 3.8]{LT}), which is compact. Since the unique stabilizer of points of $F_{\sigma}$ in $\sfG$ is given by $\sfG_{\sigma}=\sfG \cap \sfW_{\sigma}$, $F_{\sigma}/(\sfG/\sfG_{\sigma})$ is a compact toric manifold with the effective Hamiltonian action of $(\sfT/\sfG)/(\sfT_{\sigma}/\sfG_{\sigma})$. On the other hand, $F_{\sigma}/(\sfG/\sfG_{\sigma})$ is exactly $F_{\sigma}/\sfW = F_{\sigma}/(\sfW/\sfW_{\sigma})$. Thus $H(F_{\sigma}/(\sfW/\sfW_{\sigma}), \ZZ)$ has only even degree and has no $\ZZ$-torsion. Now the injectivity of $H_{\sfW}(\calZ_K,\ZZ) \to H_{\sfW}(F,\ZZ)$ implies that $H_{\sfW}(\calZ_K,\ZZ)$ has no $\ZZ$-torsion and has only even degree. It also implies the freeness of $H_{\sfT}(\calZ_K,\ZZ)$ over $\ZZ[(\sfT/\sfW)^*]$ by Theorem \ref{free1}.

To apply the injectivity theorem over $\ZZ$ (Remark 4.10 \cite{HM}), we need to have that $\sfW_{\sigma}$ is connected
and the weights of the action on the (negative) normal bundle are all primitive for each connected component of
$F_{\sigma}$. The former is true by the assumption. For the latter, look at the normal bundle of $F_{\sigma}$ in
$\calZ_K$ which is given by $\bigoplus_{i\in \sigma} \CC\frac{\partial}{\partial z_i}$. The weights of the
$\sfT_{\sigma}$-equivariant normal bundle are the standard $\ZZ$-basis $\{\lambda_i, i\in\sigma\}$ of
$\sfN_{\sfT_{\sigma}}^*$. We need to check that the induced $W_{\sigma}$-weights $\tilde{\lambda}_i:=
A_{\sigma}^*(\lambda_i) \in \sfN_{\sfW_{\sigma}}^*$ are non-zero and primitive where $A_{\sigma}:W_{\sigma} \inc
T_{\sigma}$ is the restriction of the natural inclusion $A: \sfW \to \sfT$. It is easy to see that $\tilde{\lambda_i}$
is non-zero, since, if otherwise, the normal direction $\CC\frac{\partial}{\partial z_i}$ is also contained in
$F_a$. Finally the proof is completed by the following lemma.
\begin{lem}\label{prim}
$A_{\sigma}^*(\lambda_i) \in \sfN_{\sfW_{\sigma}}^*$ is primitive.
\end{lem}
\emph{Proof of Lemma \ref{prim}}: Consider the following commutative diagram of tori and its dual for the weight lattices:
\begin{equation}
\xymatrix{
\sfW_{\sigma\backslash\{i\}}\ar[d] \ar[r]& \sfT_{\sigma\backslash\{i\}}\ar[d] \\
\sfW_{\sigma}\ar[r]^{A_{\sigma}}\ar[rd]_{\sff_i} & \sfT_{\sigma}\ar[d]^{\sfg_i}\\
& \sfT_{\{i\}}
}
\ \ \ \ \ \ \  \ \ \ \ \ \ \ \ 
\xymatrix{
\sfN_{\sfW_{\sigma\backslash\{i\}}}^* &\sfN_{\sfT_{\sigma\backslash\{i\}}}^*\ar[l]\\
\sfN_{\sfW_{\sigma}}^* \ar[u]&\sfN_{\sfT_{\sigma}}^*\ar[l]_{A_{\sigma}^*}\ar[u]\\
& \sfN_{\sfT_{\{i\}}} \ar[u]_{\sfg_i^*}\ar[lu]^{\sff_i^*}
}
\end{equation}
Here $\sfg_i$ is the canonical projection. The map $\sff_i = \sfg_i\circ A_{\sigma}$ must be surjective since $\sfT_i$
is one dimensional and $\tilde{\lambda_i}$ is non-zero. Also we have $W_{\sigma\backslash\{i\}} = \ker \sff_i$ which
must be connected by the assumption.  Therefore we have a short exact sequence tori $0 \to \sfW_{\sigma\backslash\{i\}}
\to \sfW_{\sigma} \to \sfT_{\{i\}} \to 0$ which implies that $f_i^*$ maps $N_{\sfT_i}^*$ to a direct summand. Thus
$\tilde{\lambda}_i$ must be primitive since $\lambda_i$ is a basis of $\sfN_{\sfT_i}^*$.
\end{proof}
\begin{rem}
Theorem \ref{Injectivity} holds when $\sfG=\sfW$. In this case, by the assumption, $\sfG$ acts on $\calZ_K$ is free and
$F = \calZ_K$. We recover the fact that the equivariant cohomology of toric manifolds (or smooth toric varieties) is
free over $\ZZ[\sfR^*]$.
\end{rem}
\section{{\bf Examples}}\label{exm}
\begin{exm}[Effective Weighted Projective Spaces]
Let $\sfa:=(a_1,\cdots, a_m)$ be a sequence of positive integers with $\gcd(a_1,\cdots, a_m)\not=1$ and let
$[\CP^{m-1}_{\sfa}]$ be the corresponding effective weighed projective space. As in Section \ref{bg},
$H^*([\CP^{m-1}_{\sfa}];\ZZ) = H_{\sfG}^*(\calZ_K;\ZZ)$ where $\sfG = \{(t^{a_1},\cdots,t^{a_m})\} \subset \sfT$ and $K$
the boundary of an $(m-1)$-simplex. Here we will not write the matrix $B$. It would be useful if there is a formula to
describe a $\ZZ$-basis of the dual weight lattice of $\sfR$ in terms of $(a_1,\cdots, a_m)$ but to our knowledge it is
not known.

The corresponding Stanley-Reisner ring is $\ZZ[x_1,\cdots, x_m]/\lan x_1\cdots x_m\ran$. In \cite{Holm08}, the ordinary cohomology is computed:
\[
H^*_{\sfG}(\calZ_K,\ZZ) \cong \ZZ[y]/\lan a_1\cdots a_m y^m\ran.
\]
It has only even degree, so $H^*_{\sfT}(\calZ_K,\ZZ) \to H_{\sfG}^*(\calZ_K,\ZZ)$ is surjective. However
$H_{\sfG}^*(\calZ_K,\ZZ)$ has $\ZZ$-torsion, and so $H^*_{\sfT}(\calZ_K,\ZZ)$ is not free over $\ZZ[\sfR^*]$.

Thus $H^*_{\sfT}(\calZ_K,\ZZ) \to H_{\sfG}^*(\calZ_K,\ZZ)$ is surjective, but the source is not free over
$\ZZ[\sfR^*]$. $[\CP^1_{12}]$ is the simplest such example where the dimension is 2. This provides an answer to a question
analogous to Question 1.1 \cite{FranzPuppeOsaka}, see also \cite{Allday}.
\end{exm}
\begin{exm}\label{nonexm}
As we saw in the previous examples, $H_{\sfG}(\calZ_K,\ZZ)$ has $\ZZ$-torsion (infinitely many) in even degree for
$[\calZ_K/\sfG]=\CP^{m-1}_{\sfa}$. The direct product of such toric orbifolds is also a toric orbifold and the
K\"{u}nneth theorem provides the $\ZZ$-torsions in odd degree. More concretely, take the labeled polytope
\[
\xymatrix{
\bullet\ar@{-}[r]^1_{H_4}\ar@{-}[d]_1^{H_1}&\bullet\ar@{-}[d]^2_{H_3}\\
\bullet\ar@{-}[r]_2^{H_2} & \bullet 
}
\]
which gives $B=\begin{pmatrix} 1&0 &-2 & 0 \\ 0 &2 & 0 &-1 \end{pmatrix}$. This defines the direct product
$[\CP^1_{12}\times \CP^1_{12}]$ which gives odd degree elements in $H_{\sfT}(\calZ_K,\ZZ)$. Thus
$H^*_{\sfT}(\calZ_K,\ZZ) \to H_{\sfG}^*(\calZ_K,\ZZ)$ is not surjective. We can also see this by checking if $(x_1-
2x_3, 2x_2 - x_4)$ is a regular sequence of $\ZZ[K]=\frac{\ZZ[x_1,\cdots, x_4]}{\lan x_1x_3, x_2x_4\ran}$ as a module
over $\ZZ[x_1- 2x_3, 2x_2 - x_4]$. Indeed, it is not a regular sequence: $x_1- 2x_3$ is a non-zero divisor in $\ZZ[K]$
but $2x_2 - x_4$ is a zero divisor in $\ZZ[K]/(x_1- 2x_3)$ since $(2x_2 - x_4)x_2x_3^2 = 0$ and $x_2x_3^2\not=0$ in
$\ZZ[K]/(x_1- 2x_3)$.

From this example, we can create more examples by the method of the symplectic cut. Consider the cutting by a hyperplane $H_5$:
\[
\xymatrix{
\bullet\ar@{-}[rr]^1_{H_4}\ar@{-}[dd]_1^{H_1}&\circ&\bullet\ar@{-}[dd]^2_{H_3}\\
\circ \ar@{.}[ru]_{H_5}^1&&\circ\\
\bullet\ar@{-}[rr]_2^{H_2} &\circ& \bullet 
} \ \ \ \ \  \ \ \ \ \ \ \ \ \ \ \ \ \ \ \ \ \ \ \ \ K_1
\xymatrix{
& & \bullet_4 \ar@{.}[d]\ar@{-}[ld]\\
&\bullet_5 \ar@{-}[ld]& \circ_3\\
\bullet_1 \ar@/^3pc/@{-}[rruu] \ar@{.}[r]& \circ_2\ar@{.}[ru] & 
}
\ \ \ \ \ \ \ \ \ \ 
K_2
\xymatrix{
& & \bullet_4 \ar@{-}[d]\ar@{-}[ld]\\
&\bullet_5 \ar@{-}[ld]& \bullet_3\\
\bullet_1 \ar@/^3pc/@{.}[rruu] \ar@{-}[r]& \bullet_2\ar@{-}[ru] & 
}
\]
where $K_1$ and $K_2$ are simplicial complexes associated to the cut pieces.  Let $\tilde{B}=\begin{pmatrix} 1&0 &-2 & 0
& 1 \\ 0 &2 & 0 &-1&-1 \end{pmatrix}$. Then $\tilde{B}$ defines a 2-dimensional subtorus $\sfG$ of 5 dimensional torus
$\sfT$, which acts on $\calZ_{K_1}$ and $\calZ_{K_2}$ locally freely. Each $[\calZ_{K_1}/\sfG]$ and $[\calZ_{K_2}/\sfG]$
defines the toric orbifolds corresponding to the symplectic cut of $\CP^1_{12}\times\CP^1_{12}$. The surjectivity holds
for $K_1$ but not for $K_2$. We can see this by checking that, if $u_1=x_1-2x_3 + x_4$ and $u_2= 2x_1-x_4-x_5$, then
$(u_1,u_2)$ is a regular sequence of $\ZZ[K_1]$, while $(u_1,u_2)$ is not a regular sequence for $\ZZ[K_2]$ since
$2x_1-x_4-x_5$ is annihilated by $x_2x_3^2$. By the same algebraic computation, we see that $(u_1,u_2)$ is not a regular
sequence of $\ZZ[K_1\cup K_2]$ as a $\ZZ[u_1,u_2]$-module, i.e. $H_{\sfT}(\calZ_{K_1\cup K_2}, \ZZ) \to
H_{\sfG}(\calZ_{K_1\cup K_2}, \ZZ)$ is not surjective.
\end{exm}

\section{{\bf Algebraic Gysin Sequence}} \label{secgysin}
Let $\sfU$ be a subgroup of $\sfG$ such that $\sfL:=\sfG/\sfU$ is a $1$-dimensional torus. We have a principal $\sfL$-bundle $\pi: E\sfT \times_{\sfU} \calZ_K \to E\sfT\times_{\sfG} \calZ_K$ and the corresponding Gysin sequence
\[
\cdots \to H^{i-1}_{\sfG}(\calZ_K,\ZZ)\stackrel{\cup \bfe}{\longrightarrow}  H^{i+1}_{\sfG}(\calZ_K,\ZZ) \stackrel{\pi^*}{\longrightarrow} H^{i+1}_{\sfU}(\calZ_K,\ZZ)  \stackrel{\pi_*}{\longrightarrow} H^i_{\sfG}(\calZ_K,\ZZ) \stackrel{\cup\bfe}{\longrightarrow}  H^{i+2}_{\sfG}(\calZ_K,\ZZ) \to \cdots  
\]
where $\bfe$ is the Euler class of the bundle and $\pi_*$ / $\pi^*$ is the pushforward / pullback map. In the light of
Theorem \ref{Franz}, it is natural to ask if there is a purely algebraic construction of a long exact sequence of
$\Tor$'s corresponding the Gysin sequence. We describe the construction in the following
\begin{construction}[Algebraic Gysin Sequence]\label{GS}
Let $\tilde{\sfR}:=\sfT/\sfU$ and identify $H^*(B\sfR,\ZZ)=\ZZ[u_1,\cdots,u_n,u_{n+1}]=\ZZ[\tilde{\sfR}^*]$ where
$\ZZ[u_1,\cdots,u_n]=H^*(B\sfR,\ZZ)$. Consider the short exact sequence of Koszul complexes for (as modules over
$\ZZ[\tilde{\sfR}^*]$):
\begin{equation}\label{gysin}
0 \to K^{\ZZ[\tilde{\sfR}^*]}(\xi_1,\cdots,\xi_n) \stackrel{\tau^*}{\longrightarrow} K^{\ZZ[\tilde{\sfR}^*]}(\xi_1,\cdots,\xi_n,\xi_{n+1}) \stackrel{\tau_*}{\longrightarrow} K^{\ZZ[\tilde{\sfR}^*]}(\xi_1,\cdots,\xi_n)[-1] \to 0
\end{equation}
where the first map is the obvious inclusion denoted by $\tau^*$ and the second map is \emph{getting rid of
  $\xi_{n+1}\wedge $} denoted by $\tau_*$. Note that $K^{\ZZ[\tilde{\sfR}^*]}(\xi_1,\cdots,\xi_n) =
\ZZ[u_1,\cdots,u_n,u_{n+1}]\lan \xi_1,\cdots, \xi_n\ran$ and the differential is giving by extending $\partial \xi_i =
u_i$ as a differential algebra where $\lan \ \ \ran$ denotes the exterior algebra.

Let $\sfM$ be a graded $\ZZ[x_1,\cdots,x_m]$-module. After tensoring $\sfM$, we obtain the long exact sequence of Tor modules over $\ZZ[u_1,\cdots, u_n,u_{n+1}]$:
\begin{equation}\label{algGysin}
\cdots \to \Tor_{i+1}^{\ZZ[\sfR^*]}(\sfM, \ZZ)\stackrel{\delta}{\longrightarrow} \Tor_{i+1}^{\ZZ[\sfR^*]}(\sfM, \ZZ) \stackrel{\tau^*}{\longrightarrow}
\Tor_{i+1}^{\ZZ[\tilde{\sfR}^*]}(\sfM,\ZZ) \stackrel{ \tau_*}{\longrightarrow} \Tor_{i}^{\ZZ[\sfR^*]}(\sfM, \ZZ)\stackrel{\delta }{\longrightarrow} \Tor_{i}^{\ZZ[\sfR^*]}(\sfM, \ZZ)  \to \cdots 
\end{equation}
We call this the \emph{algebraic (homological) Gysin sequence}.
\end{construction}
\begin{prop}
The connecting map $\delta$ is a multiplication by $u_{n+1}$ and it is independent of the choice of $u_{n+1}$.
\end{prop}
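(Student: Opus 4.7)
The plan is to compute the connecting homomorphism by the usual zig-zag recipe applied to the short exact sequence \eqref{gysin} tensored with $\sfM$, and then observe that the resulting multiplication operator depends only on the image of $u_{n+1}$ modulo $(u_1,\dots,u_n)$.

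First I would spell out the connecting map. Let $\alpha\in K^{\ZZ[\tilde\sfR^*]}(\xi_1,\dots,\xi_n)\otimes\sfM$ be a cycle representing a class in $\Tor_i^{\ZZ[\sfR^*]}(\sfM,\ZZ)$ (after shift, which accounts for the degree change $i+1\to i$). A preimage under $\tau_*$ is obtained by wedging on $\xi_{n+1}$, giving $\xi_{n+1}\wedge\alpha$ in the middle complex. Since the Koszul differential $\partial$ is a derivation with $\partial\xi_{n+1}=u_{n+1}$,
\[
\partial(\xi_{n+1}\wedge\alpha)=u_{n+1}\alpha-\xi_{n+1}\wedge\partial\alpha=u_{n+1}\alpha,
\]
using $\partial\alpha=0$. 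Since $u_{n+1}\alpha$ contains no $\xi_{n+1}$, it lies in the image of $\tau^*$, and by definition of the connecting map $\delta[\alpha]=[u_{n+1}\alpha]$. Hence $\delta$ is multiplication by $u_{n+1}$, where the multiplication is via the ambient $\ZZ[\sfT^*]$-module structure on $\sfM$ (and thus on $\Tor$).

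For independence, the element $u_{n+1}$ is only determined up to a change of basis of $\ZZ[\tilde\sfR^*]$ fixing $\ZZ[\sfR^*]$, i.e.\ up to replacing $u_{n+1}$ by $u_{n+1}+\sum_{i=1}^n c_i u_i$ (and possibly a unit). Now $\Tor_i^{\ZZ[\sfR^*]}(\sfM,\ZZ)$ is annihilated by each $u_i$ since $\ZZ=\ZZ[\sfR^*]/(u_1,\dots,u_n)$: concretely, $u_i$ acts on the Koszul complex $\sfM\langle\xi_1,\dots,\xi_n\rangle$ as $\partial\circ(\xi_i\wedge-)+(\xi_i\wedge-)\circ\partial$, which is nullhomotopic. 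Therefore multiplication by $u_{n+1}+\sum c_i u_i$ coincides with multiplication by $u_{n+1}$ on $\Tor_i^{\ZZ[\sfR^*]}(\sfM,\ZZ)$, proving independence.

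The computation is essentially routine; the only subtle point is keeping the sign convention of the Koszul differential and the degree shift $[-1]$ in \eqref{gysin} consistent so that $\delta$ lands in the correct degree. I expect no genuine obstacle here, since both claims reduce to the elementary Koszul identity above.
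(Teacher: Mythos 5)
Your proof is correct and follows essentially the same route as the paper: the same zig-zag computation $\partial(\xi_{n+1}\wedge\alpha)=u_{n+1}\alpha$ (with $\partial\alpha=0$) identifies $\delta$ with multiplication by $u_{n+1}$ via the ambient module structure. Your justification of the independence claim --- that each $u_i$, $i\le n$, acts null-homotopically on the Koszul complex, so replacing $u_{n+1}$ by $u_{n+1}+\sum_{i=1}^n c_iu_i$ does not change the induced operator on $\Tor^{\ZZ[\sfR^*]}_*(\sfM,\ZZ)$ --- is a more explicit version of the paper's brief appeal to basis-independence of $\Tor$ (with the same implicit convention that the sign, i.e.\ the orientation of $\sfL$, is fixed), and is a welcome sharpening rather than a different method.
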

\begin{proof}
It follows from the diagram chasing. Consider the part of the map of complexes
\[
\xymatrix{
K_{i-1}^{\ZZ[\tilde{\sfR}^*]}(\xi_1,\cdots,\xi_n) \ar[r] & K_{i-1}^{\ZZ[\tilde{\sfR}^*]}(\xi_1,\cdots,\xi_n) \ar[r] &K_{i}^{\ZZ[\tilde{\sfR}^*]}(\xi_1,\cdots,\xi_n) \\
K_i^{\ZZ[\tilde{\sfR}^*]}(\xi_1,\cdots,\xi_n) \ar[u]_{\partial}\ar[r] & K_i^{\ZZ[\tilde{\sfR}^*]}(\xi_1,\cdots,\xi_n)\ar[u]_{\partial} \ar[r] &K_{i-1}^{\ZZ[\tilde{\sfR}^*]}(\xi_1,\cdots,\xi_n) \ar[u]_{\partial}
}
\]
Let $z$ be a cycle in $K_{i-1}^{\ZZ[\tilde{\sfR}^*]}(\xi_1,\cdots,\xi_n)$ and lift it to top left corner:
\[
\xymatrix{
u_i\cdot z \ar[r] & \partial ( \xi_{n+1} \wedge z) = u_i\cdot z\ar[r] &0 \\
 &  \xi_{n+1}\wedge z\ar[r] \ar[u]_{\partial}&z \ar[u]_{\partial}
}.
\]
Since Tor modules are independent of the choice of the basis of the polynomial ring, so $\delta$ is independent of the choice of $u_{n+1}$.  
\end{proof}
\begin{defn}[Cohomological Algebraic Gysin sequence]
As in Remark \ref{cohdegree}, we can assign the cohomological degree and turn the sequence (\ref{algGysin}) into a cohomological sequence: 
\begin{eqnarray}\label{cohAlgGysin}
\cdots \to \Tor^{i-1}_{\ZZ[\sfR^*]}(\sfM, \ZZ)\stackrel{\cdot u_{n+1}}{\longrightarrow} \Tor^{i+1}_{\ZZ[\sfR^*]}(\sfM, \ZZ) \stackrel{\tau^*}{\longrightarrow}
\Tor^{i+1}_{\ZZ[\tilde{\sfR}^*]}(\sfM,\ZZ) \stackrel{ \tau_*}{\longrightarrow} \Tor^{i}_{\ZZ[\sfR^*]}(\sfM, \ZZ)\stackrel{\cdot u_{n+1}}{\longrightarrow} \Tor^{i+2}_{\ZZ[\sfR^*]}(\sfM, \ZZ)  \to \cdots 
\end{eqnarray}
We call this the \emph{cohomological algebraic Gysin sequence}.
\end{defn}
\begin{rem}
In the special case when $K$ is the simplicial complex associated to a Delzant polytope $\Delta$, $\calZ_{K}/S$ is
homeomorphic to the corresponding symplectic toric manifold. We have $\Tor_i^{\ZZ[\sfR^*]}(\ZZ[K],\ZZ) = 0$ for all
$i\geq 1$ since we know that $\ZZ[K]$ is free over $\ZZ[\sfR^*]$. Thus the long exact sequence (\ref{GS}) implies that
$\Tor_i^{\ZZ[\tilde{R}^*]}(\ZZ[K],\ZZ) = 0$ for all $i \geq 2$. Hence the only non-zero part of the long exact sequence
is:
\[
\xymatrix{ 0\ar[r]&
  \Tor_{1}^{\ZZ[\tilde{\sfR}^*]}(\ZZ[K],\ZZ) \ar[r]^{\tau_{*}} &
  \Tor_{0}^{\ZZ[\sfR^*]}(\ZZ[K],\ZZ) \ar[r]^{\delta} &
  \Tor_{0}^{\ZZ[\sfR^*]}(\ZZ[K],\ZZ) \ar[r]^{\tau^{*}} &
   \Tor_{0}^{\ZZ[\tilde{\sfR}^*]}(\ZZ[K],\ZZ) \ar[r]^{} & 0
    }
\]
This sequence, together with the identification of torsion algebras and cohomology rings of toric manifolds, gives the Gysin sequence used in Luo's paper \cite{Luo10} to compute the cohomology ring of a good contact toric manifold.
\end{rem}


\section*{Acknowledgements}
The authors want to thank M. Franz, T. Holm, Y. Karshon, A. Knutson, T. Ohmoto, K. Ono,  D. Suh for important advice and useful conversations. The first author is particularly indebted to K. Ono for providing him an excellent environment at Hokkaido University where he had spent significant time for this paper in July and August 2011. The first author would like to show his gratitude to the Algebraic Structure and its Application Research Center (ASARC) at KAIST for its constant support. The first author is also supported by the National Research Foundation of Korea (NRF) grant funded by the Korea government (MEST) (No. 2012-0000795, 2011-0001181).
\bibliography{references}{}
\bibliographystyle{acm}
\end{document}